\tikzset{every picture/.style={line width=0.75pt}}
\newtheorem{thm}{Theorem}
\newtheorem{prop}{Proposition}
\newtheorem{definition}{Definition}
\newtheorem{rem}{Remark}
\newtheorem{lem}{Lemma}
\newcommand{\esssup}{\mathrm{ess\, sup}}
\newcommand{\essinf}{\mathrm{ess\, inf}}
\newcommand{\Addresses}{{
  \bigskip
  \footnotesize

  CB: \textsc{Department of Mathematics, University College London, 25 Gordon St, London, WC1H 0AY}\par\nopagebreak
  \textit{Email address:} \texttt{c.bellettini@ucl.ac.uk}

  \medskip

  KMS: \textsc{Department of Mathematics, University College London, 25 Gordon St, London, WC1H 0AY}\par\nopagebreak
  \textit{Email address:} \texttt{kobe.marshall-stevens.20@ucl.ac.uk}
}}
\begin{document}

\title{\textbf{ON ISOLATED SINGULARITIES AND GENERIC REGULARITY OF MIN-MAX CMC HYPERSURFACES}}
\author{COSTANTE BELLETTINI \& KOBE MARSHALL--STEVENS}
\date{\vspace{-5ex}}
\maketitle

\begin{abstract}
\noindent In compact Riemannian manifolds of dimension 3 or higher with positive Ricci curvature, we prove that every constant mean curvature hypersurface produced by the Allen--Cahn min-max procedure in \cite{bellettini-wickramasekera} (with constant prescribing function) is a local minimiser of the natural area-type functional around each isolated singularity. In particular, every tangent cone at each isolated singularity of the resulting hypersurface is area-minimising. As a consequence, for any real $\lambda$ we show, through a surgery procedure, that for a generic 8-dimensional compact Riemannian manifold with positive
Ricci curvature there exists a closed embedded smooth hypersurface of constant mean curvature $\lambda$; the minimal case ($\lambda = 0$) of this result was obtained in \cite{C-L-S}. 
\end{abstract}

\tableofcontents

\section{Introduction}\label{sec: introduction}

The Allen--Cahn min-max procedure in \cite{bellettini-wickramasekera}, with constant prescribing function, shows that in a compact $8$-dimensional Riemannian manifold there exists a quasi-embedded hypersurface of constant mean curvature, with a singular set consisting of finitely many points (see Subsection \ref{subsec: Allen--Cahn minmax} below for a precise description). One may thus conjecture the existence of a smoothly embedded constant mean curvature hypersurface in all $8$-dimensional Riemannian manifolds under some assumption on the metric; for example a genericity assumption. As a first step, we resolve this for manifolds with positive Ricci curvature:

\begin{thm}\label{thm: generic regularity in dimension 8 in positive ricci} Let $N$ be a smooth compact $8$-dimensional manifold and $\lambda \in \mathbb{R}$. There is an open and dense subset, $\mathcal{G}$, of the smooth metrics with positive Ricci curvature such that for each $g \in \mathcal{G}$, there exists a closed embedded smooth hypersurface of constant mean curvature $\lambda$ in $(N,g)$.
\end{thm}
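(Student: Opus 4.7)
The plan is to combine the Allen--Cahn min-max construction of \cite{bellettini-wickramasekera} with a surgery at each isolated singularity, making essential use of the main new result of this paper: every tangent cone at an isolated singularity of the min-max CMC hypersurface is area-minimising. Openness of $\mathcal{G}$ is the easy half: given a metric $g$ of positive Ricci curvature admitting a smooth closed embedded hypersurface $\Sigma_g$ of constant mean curvature $\lambda$, the implicit function theorem applied to the mean curvature operator on small normal graphs over $\Sigma_g$ produces, for every sufficiently nearby metric $g'$ with positive Ricci curvature, a nearby smooth closed embedded hypersurface of constant mean curvature $\lambda$. Matters thus reduce to density: given any positive-Ricci metric $g_0$ and $\varepsilon > 0$, produce a $C^\infty$ $\varepsilon$-perturbation $g$ of $g_0$, still of positive Ricci curvature, admitting such a hypersurface.

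For such $g_0$, the Allen--Cahn min-max of \cite{bellettini-wickramasekera} yields a quasi-embedded hypersurface $M$ of constant mean curvature $\lambda$, whose singular set in dimension $8$ is a finite collection $\{p_1, \ldots, p_k\}$ of isolated points. By the main theorem of this paper, $M$ is a local minimiser of the natural area-type functional $\mathcal{A}^\lambda$ around each $p_i$. Rescaling about $p_i$ then shows that every tangent cone $C_i$ at $p_i$ is an area-minimising hypercone in $\mathbb{R}^8$. By the Hardt--Simon foliation theorem, each side of such a strictly minimising $C_i$ is foliated by smooth, strictly stable, area-minimising hypersurfaces asymptotic to $C_i$, which will serve as the local surgery models.

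I would then perform surgery one singularity at a time. In a small geodesic ball $B_r(p_i) \subset N$, identified with a ball in $\mathbb{R}^8$ via normal coordinates, I remove the portion of $M$ inside $B_{r/2}(p_i)$ and replace it with a suitably rescaled leaf of the Hardt--Simon foliation, interpolated with $M$ across a thin annular neck. The result is a smooth closed embedded hypersurface $\Sigma$ which is $C^0$-close to $M$ and whose $g_0$-mean curvature differs from $\lambda$ only on the necks, where the error can be made as small as desired by taking the scales small. Finally, I would perturb $g_0$ in a tubular neighbourhood of $\Sigma$ to a metric $g$ for which $\Sigma$ has mean curvature exactly $\lambda$; the freedom to vary the metric transverse to $\Sigma$ provides enough parameters to correct the small mean curvature error, and the resulting $C^\infty$ perturbation can be made so small that positive Ricci curvature is preserved.

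The principal obstacle is the interaction of the surgery with the CMC constraint. In the minimal case ($\lambda = 0$) treated in \cite{C-L-S}, scale invariance of area combined with the minimising property at each singular point allows the Hardt--Simon smoothing to be run directly. The functional $\mathcal{A}^\lambda$ is not scale invariant, so the justification that tangent cones at isolated singularities are genuinely area-minimising is non-trivial and is precisely what the present paper supplies; after the surgery one must then reintroduce the correct value $\lambda$ via the metric perturbation, all while staying inside the open cone of positive-Ricci metrics. Controlling the gluing errors in norms strong enough for the final metric correction to preserve positive Ricci curvature, and verifying that the resulting hypersurface is genuinely smooth and embedded rather than merely quasi-embedded, is the heart of the argument.
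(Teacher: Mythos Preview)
Your density argument has a genuine gap when $\lambda \neq 0$. You propose to replace $M \cap B_{r/2}(p_i)$ by a rescaled leaf of the Hardt--Simon foliation; but such leaves are \emph{minimal}, so in $g_0$-normal coordinates the mean curvature of the glued hypersurface inside $B_{r/2}(p_i)$ is $O(r)$, not $\lambda$. The assertion that the discrepancy from $\lambda$ is supported ``only on the necks'' is therefore incorrect: there is a mean-curvature defect of size $|\lambda|$ throughout the inner ball, and it does not vanish as the scale shrinks. Correcting a defect of fixed size $|\lambda|$ by a compactly supported metric perturbation forces the first derivatives of the perturbation to be at least of order $|\lambda|$ (and higher derivatives to blow up like $|\lambda|/r$ as $r \to 0$), so the change cannot be made $C^{k,\alpha}$-small for $k \ge 1$ and positive Ricci curvature need not survive. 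Concretely, the conformal factor $\log(H_{g_0}/\lambda)$ used in the paper's surgery is small only when $H_{g_0}$ is close to $\lambda$; with minimal leaves it diverges.

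The paper avoids this by replacing Hardt--Simon with the CMC foliation of \cite{Kostas23}: Theorem~\ref{thm: local minimisation in positive ricci} supplies exactly the one-sided $\mathcal{F}_\lambda$-minimisation hypothesis required there, and the leaves produced already have mean curvature $\lambda$ in the original metric $g_0$. After the cut-and-paste the error is then genuinely supported on the gluing annulus and tends to zero in $C^{2,\alpha}$ by Allard, so the conformal correction in Proposition~\ref{prop: CMC surgery} is $C^{k,\alpha}$-small for every $k$. This is precisely the point at which your phrase ``reintroduce the correct value $\lambda$ via the metric perturbation'' fails with minimal leaves but succeeds with CMC leaves.

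Two smaller points. Your openness argument via the implicit function theorem on normal graphs requires the Jacobi operator of $\Sigma_g$ to be invertible, which is not automatic for a min-max hypersurface; the paper instead invokes \cite[Section~7]{White1991}. And embeddedness (as opposed to quasi-embeddedness) of the min-max hypersurface in positive Ricci is already provided by \cite{Bellettini-Workman}, so that step needs no further work.
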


We actually prove more general results valid in higher dimensions, showing the generic existence of a closed embedded hypersurface of constant mean curvature, with singular set of codimension $7$, containing no isolated singularities with regular tangent cones. Indeed, Theorem \ref{thm: generic regularity in dimension 8 in positive ricci} is a consequence of the following:

\begin{thm}\label{thm: generic removability of isolated singularities with regular cones in positive ricci}
Let $N$ be a smooth compact manifold of dimension $n+1 \geq 3$ and $\lambda \in \mathbb{R}$.  There is a dense subset, $\mathcal{G}$, of the smooth metrics with positive Ricci curvature such that for each $g \in \mathcal{G}$, there exists a closed embedded hypersurface of constant mean curvature $\lambda$, smooth away from a closed singular set of Hausdorff dimension at most $n-7$, containing no isolated singularities with regular tangent cones in $(N,g)$.
\end{thm}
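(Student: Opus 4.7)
The plan is to prove density of $\mathcal G$ by starting from an arbitrary smooth metric $g_0$ of positive Ricci curvature and, given $\varepsilon > 0$, producing a metric $g$ within $\varepsilon$ of $g_0$ (still of positive Ricci curvature) for which the Allen--Cahn min-max procedure of \cite{bellettini-wickramasekera} yields a closed embedded CMC hypersurface $M_g$ of mean curvature $\lambda$ with the stated properties. Since positive Ricci is an open condition, any sufficiently small smooth perturbation of $g_0$ remains admissible.

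First I would apply the procedure of \cite{bellettini-wickramasekera} with constant prescribing function $\lambda$ in $(N, g_0)$ to obtain a closed quasi-embedded hypersurface $M_0$ of constant mean curvature $\lambda$ whose singular set is closed of Hausdorff dimension at most $n-7$. Let $\mathcal I$ denote the subset of isolated singularities of $M_0$ at which some tangent cone is regular, i.e.\ of the form $C(\Sigma) \cup \{0\}$ for a smooth closed minimal submanifold $\Sigma \subset S^n$. By the main theorem of the present paper, $M_0$ is a local minimiser of the area-type functional $A + \lambda V$ (area plus $\lambda$ times signed enclosed volume) in a neighbourhood of each $p \in \mathcal I$, and every tangent cone at $p$ is area-minimising; in particular at $p \in \mathcal I$ the tangent cone is a regular area-minimising cone $\mathbf C_p$.

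Next, at each $p \in \mathcal I$ I would perform a Hardt--Simon type surgery: the two sides of $\mathbf C_p$ are foliated by smooth area-minimising hypersurfaces asymptotic to $\mathbf C_p$. Replacing the piece of $M_0$ inside a small geodesic ball $B_\rho(p)$ by a suitably scaled leaf of this foliation, on the side selected by the orientation inherited from the Allen--Cahn construction, produces a competitor $\tilde M$ smooth in $B_\rho(p)$; a comparison combining the strict $L^1$-dominance of the Hardt--Simon leaves over $\mathbf C_p$ on annular regions with the local $A+\lambda V$-minimality of $M_0$ yields $(A + \lambda V)(\tilde M) < (A + \lambda V)(M_0)$ for $\rho$ small enough. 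To restore the CMC condition one then performs a localised metric deformation supported in $\bigcup_{p \in \mathcal I} B_\rho(p)$ to a nearby positive-Ricci metric $g_1$ within $\tfrac{\varepsilon}{2}$ of $g_0$; upper semi-continuity of the Allen--Cahn min-max width together with the strict width gap opened by the surgery forces the min-max hypersurface $M_{g_1}$ in $(N, g_1)$ to have strictly fewer isolated singularities with regular tangent cones. A finite iteration --- the number of such singularities being controlled by uniform area and lower density bounds --- produces $g \in \mathcal G$ within $\varepsilon$ of $g_0$.

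The principal obstacle is the coupling between the surgery and the metric perturbation: the resolved hypersurface $\tilde M$ is not CMC in $g_0$, and one must deform the metric to make a CMC representative emerge from the min-max procedure in the new metric without either losing positive Ricci curvature or creating new isolated regular-cone singularities. Proving a strict drop of the min-max width at each surgery step, while keeping the width variation controlled by the metric variation, is the technical core of the argument; in dimension $8$ the full singular set is $0$-dimensional, so this procedure upgrades density to openness and gives Theorem \ref{thm: generic regularity in dimension 8 in positive ricci}.
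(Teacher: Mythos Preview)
Your proposal contains a genuine gap, and it stems from taking an unnecessarily indirect route. You propose to perform surgery to produce a competitor $\tilde M$ with strictly smaller $\mathcal F_\lambda$-energy, then perturb the metric, then \emph{re-run the Allen--Cahn min-max} in the new metric $g_1$ and argue via a width drop that the resulting min-max hypersurface $M_{g_1}$ has strictly fewer isolated singularities with regular tangent cones. This last implication is unjustified: a strict drop in the min-max value says nothing about the singular structure of the new critical point. The hypersurface $M_{g_1}$ produced by min-max in $(N,g_1)$ could be entirely unrelated to $\tilde M$ and could carry its own isolated singularities with regular tangent cones; there is no mechanism forcing the count to decrease. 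Your finite-iteration claim inherits this problem (and in dimensions $n+1 \geq 9$ the set $\mathcal I$ is discrete but not necessarily finite, so a uniform finite bound is not available either).

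The paper avoids all of this by never re-running the min-max. The surgery procedure (Proposition~\ref{prop: CMC surgery}) simultaneously constructs both the perturbed metric $\tilde g$ and the desired hypersurface $\tilde M$: one glues a smooth leaf of the CMC foliation of \cite{Kostas23} (the CMC analogue of Hardt--Simon) to $M$ across an annulus, and then chooses an explicit conformal factor $\tilde g = e^{2f}g$, supported in that annulus, so that the glued hypersurface has mean curvature exactly $\lambda$ with respect to $\tilde g$. The resulting $\tilde M$ is smooth in the surgery ball by construction and equals $M$ outside, so no new singularities are introduced and no appeal to min-max in the new metric is needed. One then indexes the (discrete) set of isolated singularities with regular tangent cones and performs these local surgeries with summable metric perturbations. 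The step you are missing is precisely this: the metric perturbation can be engineered so that the glued surface is itself the CMC hypersurface you want, rather than merely a competitor used to push the width down.
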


\begin{rem}
Let $\mathrm{Met}_{\mathrm{Ric}_g > 0}^{k,\alpha}(N)$ for each $k \geq 1$ and $\alpha \in (0,1)$  denote the open subset of Riemannian metrics of regularity $C^{k,\alpha}$ on $N$ with positive Ricci curvature. In the proof of Theorem \ref{thm: generic removability of isolated singularities with regular cones in positive ricci} we in fact establish that there exists a dense set, $\mathcal{G}_k \subset \mathrm{Met}^{k,\alpha}_{\mathrm{Ric}_g > 0}(N)$, such that for each $g \in \mathcal{G}_k$, the same existence conclusion of Theorem \ref{thm: generic removability of isolated singularities with regular cones in positive ricci} holds.
\end{rem}

The focus of the present work concerns the generic regularity of constant mean curvature hypersurfaces in ambient dimensions $8$ or higher and is related to previous work concerning minimal hypersurfaces. We now briefly summarise the works on generic regularity for minimal hypersurfaces in ambient dimension $8$:
\begin{itemize}
    \item The existence of the Simons cone, introduced in \cite{Simons68}, showed that stable minimal hypersurfaces may admit isolated singularities in dimension $8$. 
    
    \item The generic regularity of area-minimisers in each non-zero homology class was established in \cite{Smale}, using the fundamental result of \cite{hardt-simon}.

    \item In \cite{C-L-S}, using the Almgren--Pitts min-max procedure and \cite{hardt-simon} (in particular the extension in \cite{liu}), the existence of smooth minimal hypersurfaces was established in manifolds equipped with a generic metric of positive Ricci curvature.

    \item In \cite{li-wang21} it was shown that every $8$-dimensional closed manifold equipped with a generic metric (with no curvature assumption) admits a smooth minimal hypersurface. See also \cite{li-wang22}, where it is shown that for a generic metric, every embedded locally stable minimal hypersurface is smooth in dimension $8$.
\end{itemize}

Both \cite{Smale} and \cite{C-L-S} exploit local foliations by area-minimising hypersurfaces, provided by \cite{hardt-simon}, allowing for a surgery procedure to be established in order to perturb the metric locally. Recently an analogous foliation was established in \cite{Kostas23} for constant mean curvature hypersurfaces that locally minimise a prescribed mean curvature functional to at least one side. Such a foliation, to one side of the hypersurface, provides a natural way to perturb away an isolated singularity via a surgery procedure.

\bigskip

In order to produce such a foliation, one needs to establish that the tangent cones to isolated singularities of a candidate hypersurface are area-minimising; we establish this for all hypersurfaces of constant mean curvature arising from the Allen--Cahn min-max procedure in \cite{bellettini-wickramasekera} (with constant prescribing function) in manifolds with positive Ricci curvature.

\bigskip

The Allen--Cahn min-max procedure in \cite{bellettini-wickramasekera} produces in the first instance a quasi-embedded hypersurface of constant mean curvature with a possibly non-empty singular set of codimension at least $7$. The results of 
\cite{Bellettini-Workman} then establish that in manifolds with positive Ricci curvature the above hypersurface of constant mean curvature in fact attains the min-max value (in a manner made precise below) and is embedded, with the above dimension bound on the singular set. We thus work with this hypersurface as a candidate to perturb away isolated singularities via a surgery procedure.

\bigskip

For a compact Riemannian manifold $(N,g)$ and $\lambda \in \mathbb{R}$ we define the $\mathcal{F}_\lambda$ functional on a Caccioppoli set, $F \subset N$, by
\begin{equation*}
    \mathcal{F}_\lambda(F) = \mathrm{Per}_g(F) - \lambda \mathrm{Vol}_g(F).
\end{equation*}
Recall that, as shown in \cite[Proposition B.1]{bellettini-wickramasekera-cmc-reg}, smooth constant mean curvature hypersurfaces are locally $\mathcal{F}_\lambda$-minimising. Our main technical result shows that this $\mathcal{F}_\lambda$-minimisation also holds in sufficiently small balls around isolated singularities for constant mean curvature hypersurfaces produced by the Allen--Cahn min-max in manifolds with positive Ricci curvature:

\begin{thm}\label{thm: local minimisation in positive ricci} Let $(N,g)$ be a smooth compact Riemannian manifold of dimension $n+1 \geq 3$, with positive Ricci
curvature, and $\lambda \in \mathbb{R}$. The one-parameter Allen–Cahn min-max in \cite{bellettini-wickramasekera}, with constant prescribing function $\lambda$, produces a closed embedded hypersurface of constant mean curvature $\lambda$, smooth away from a closed singular set of Hausdorff dimension at most $n - 7$, locally $\mathcal{F}_\lambda$-minimising in balls around each isolated singularity. Precisely, this hypersurface arises as the boundary of a Caccioppoli set, $E \subset N$, and for each isolated singularity, $p \in \partial E \setminus \partial^* E$, there exists an $r > 0$ such that 
\begin{equation*}
    \mathcal{F}_\lambda(E) = \inf_{G \in \mathcal{C}(N)} \{ \mathcal{F}_\lambda(G) \: | \: G \setminus B_r(p) = E \setminus B_r(p) \}, 
\end{equation*}
where $\mathcal{C}(N)$ is the set of Caccioppoli sets in $N$. Consequently, the hypersurface has area-minimising tangent cones at each isolated singularity. 
\end{thm}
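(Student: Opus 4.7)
By the main results of \cite{Bellettini-Workman}, under the positive Ricci assumption the hypersurface produced by the Allen--Cahn min-max is embedded with singular set of Hausdorff dimension at most $n-7$, and it attains the (normalised) min-max value: $\mathcal{F}_\lambda(E) = W_\lambda$, where $W_\lambda$ denotes the one-parameter Allen--Cahn min-max width divided by the Modica constant $\sigma$. The strategy is to assume by contradiction that at some isolated singular point $p \in \partial E \setminus \partial^* E$ the required $\mathcal{F}_\lambda$-minimisation fails on every ball $B_r(p)$, and to derive from this a modified Allen--Cahn sweepout whose maximum energy lies strictly below $\sigma W_\lambda$, contradicting the attainment.

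Concretely, the contradiction hypothesis furnishes some $r > 0$ and a competitor Caccioppoli set $G$ with $G \setminus B_r(p) = E \setminus B_r(p)$ and $\mathcal{F}_\lambda(G) \leq \mathcal{F}_\lambda(E) - 2\eta$ for some $\eta > 0$; by direct minimisation within the associated class one may additionally assume $G$ is an $\mathcal{F}_\lambda$-minimiser in $B_r(p)$, so that $\partial G \cap B_r(p)$ enjoys the usual regularity, monotonicity and density estimates. One then constructs a continuous one-parameter family of Caccioppoli sets $\{E_t\}_{t \in [0,1]}$ realising a ``detour'' from $E$ through $G$ and back (with all modifications supported inside $B_r(p)$) satisfying $\mathcal{F}_\lambda(E_t) \leq \mathcal{F}_\lambda(E) - \eta$ uniformly in $t$. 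The family is approximated by phase fields $u_{\epsilon, t}$ built from standard tanh profiles across $\partial E_t$, giving a continuous map $t \mapsto u_{\epsilon, t}$ whose Allen--Cahn energy converges, uniformly in $t$, to $\sigma \mathcal{F}_\lambda(E_t)$ as $\epsilon \to 0$. Splicing this detour into a near-optimal Allen--Cahn sweepout at a time when it is close to the configuration corresponding to $E$ yields a new admissible sweepout with supremum energy strictly below $\sigma W_\lambda$, contradicting the attainment. With local $\mathcal{F}_\lambda$-minimisation in hand, the area-minimising character of tangent cones at $p$ follows by a standard blow-up argument: under rescaling the volume term gains one extra factor of the radius compared to the perimeter term, so the $\lambda$-contribution vanishes in the blow-up limit, leaving an area-minimising cone.

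The main technical difficulty is the construction of the detour family $\{E_t\}$ with uniform $\mathcal{F}_\lambda$-bound, together with its conversion into a continuous Allen--Cahn family compatible with splicing. A naive radial interpolation $(E \setminus B_s(p)) \cup (G \cap B_s(p))$ incurs transition contributions on $\partial B_s(p)$ of order $s^n$ and so requires careful use of the coarea formula, together with mollification in $s$, to select slicing radii with small transition terms while retaining continuity of the family. The monotonicity and density estimates for the $\mathcal{F}_\lambda$-minimiser $G$ on $B_r(p)$, combined with finite density of $\partial E$ at the isolated singularity $p$, are essential to control $\mathrm{Per}_g(E, B_s(p))$ and $\mathrm{Per}_g(G, B_s(p))$ on small balls. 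The splicing step in turn requires precise information on how an optimal Allen--Cahn sweepout approaches $E$ near the critical time --- it is precisely here that the attainment result of \cite{Bellettini-Workman} and the fact that $p$ is isolated both play an essential role, the isolation of $p$ being what guarantees that local modifications inside $B_r(p)$ do not disturb the admissibility or the endpoint behaviour of the sweepout.
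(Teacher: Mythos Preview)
Your overall contradiction strategy matches the paper's, but there is a genuine gap at the ``splicing'' step, and it is precisely the step the paper works hardest to avoid. You propose to take a near-optimal Allen--Cahn sweepout and, at a time when it is ``close to the configuration corresponding to $E$'', insert your local detour through $G$. But attainment of the min-max value $\mathcal{F}_\lambda(E)$ says nothing about \emph{which} functions appear along near-optimal paths: a path $\gamma$ with $\sup_t \mathcal{F}_{\varepsilon,\lambda}(\gamma(t))$ close to $\sigma W_\lambda$ need not come $W^{1,2}$-close (or even $L^1$-close) to any phase field representing $E$, let alone to one that agrees with your $u_{\epsilon,t}$ outside $B_r(p)$ so that splicing is possible. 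No pull-tight or deformation lemma of the required strength is available here, and you have not indicated how you would produce one. Relatedly, your detour $\{E_t\}$ is a loop based at $E$; for it to lower the maximum of a path from $a_\varepsilon$ to $b_\varepsilon$ you would actually need to reroute through $G$ and continue onward, not return to $E$---so you still have to connect the phase field of $G$ to both ends of the path.

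The paper resolves this by \emph{constructing its own explicit admissible path} from $a_\varepsilon$ to $b_\varepsilon$ rather than modifying an abstract near-optimal one. The key device is the family of sliding functions $v^t_\varepsilon = \overline{\mathbb{H}}^\varepsilon(d^\pm_{\overline{M}} - t)$; the positive Ricci assumption (via the Riccati inequality for the mean curvature of the level sets $\Gamma(t)$) forces $\mathcal{F}_{\varepsilon,\lambda}(v^t_\varepsilon) \leq \mathcal{F}_{\varepsilon,\lambda}(v_\varepsilon) + o(1)$ for all $t$, so this path already attains its maximum essentially at $v_\varepsilon$. The local drop is then achieved not by interpolating Caccioppoli sets, but by a path of $W^{1,2}$ functions inside $B_R(p)$ from $v_\varepsilon$ to a \emph{function-level} minimiser $g_\varepsilon$, built from maxima and minima of $v_\varepsilon$, $g_\varepsilon$, and a diffuse planar sweep-out of the ball; the radius $R$ is chosen so small that the sweep-out contribution is dominated by the fixed drop $\eta$ already obtained outside the ball. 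In short, positive Ricci is used twice in the paper---once for attainment and once, essentially, to build the recovery path---whereas your outline uses it only for the former and leaves the latter unaddressed.
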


\begin{rem}\label{rem: CLS comparison}
In the case that $\lambda = 0$, Theorem \ref{thm: local minimisation in positive ricci} shows that minimal hypersurfaces produced by the Allen--Cahn min-max procedure in \cite{guaraco} in manifolds with positive Ricci curvature are in fact locally area-minimising (to both sides), as opposed to just one-sided homotopy minimising as obtained via the Almgren--Pitts min-max procedure in the results of \cite{C-L-S}. These minimising properties immediately pass to tangent cones. Stable regular minimal cones that do not minimise area are known to exist, for example the Simons cone
    \begin{equation*}
        C^{1,5} = \left\{(x,y) \in \mathbb{R}^2 \times \mathbb{R}^6 \: | \: 5|x|^2 = |y|^2 \right\},
    \end{equation*}
is stable and one-sided area-minimising, but is not area-minimising to the other side (see \cite{Lawlor91}). Such a cone is not explicitly ruled out in \cite{C-L-S} from arising as a tangent cone to a min-max minimal hypersurface at an isolated singularity. Theorem \ref{thm: local minimisation in positive ricci} precludes such tangent cones. We also note that in the Allen--Cahn framework, obtaining an absolute area-minimisation property, as opposed to a homotopic minimisation property, appears to be natural; indeed the relevant space where the min-max is carried out is $W^{1,2}(N)$, which is contractible.
\end{rem}

The present work fits into the broader context of generic regularity for solutions to geometric variational problems. In addition to the works on generic regularity for minimal hypersurfaces in dimension $8$ as discussed above, we provide a non-exhaustive summary of work on generic regularity for geometric variational problems:
\begin{itemize}
    \item In \cite{C-M-S} and \cite{C-M-S-2} the generic regularity of area-minimising minimal hypersurfaces is established, in various settings, up to ambient dimension $10$.

    \item In \cite{White1985} and \cite{White2019} it is shown that for a generic ambient metric, every $2$-dimensional surface (integral current or flat chain mod $2$) without boundary that minimises area in its homology class has support equal to a smoothly embedded minimal surface.

    \item In \cite{Moore2006} and \cite{Moore2017} it is shown that for a generic ambient metric, parameterised $2$-dimensional minimal surfaces are free of branch points.

    \item In \cite{CCMS2020} and \cite{CCMS2022} an analogy was established between mean
    curvature flow with generic initial data and the generic regularity of area-minimising hypersurfaces.

    \item In \cite{F-R-S} the generic regularity of free boundaries for the obstacle problem is established up to ambient dimension $4$.
\end{itemize}

\subsection{Notation}\label{subsec: notation}

We now collect some notation and definitions that will be used throughout the paper:

\begin{itemize}
    \item Unless otherwise stated, throughout the paper we let $(N^{n+1},g)$ be a compact (with empty boundary), Riemannian manifold of dimension $n+1  \geq 3$, with positive Ricci curvature, $\mathrm{Ric}_g > 0$. We will implicitly assume $N$ is connected.
    
    \item We let $M \subset N$ be a non-empty, smooth, two-sided, separating, embedded hypersurface of constant mean curvature $\lambda \in \mathbb{R}$, with closed singular set, $\mathrm{Sing}(M) = \overline{M} \setminus M$, of Hausdorff dimension at most $n-7$. As $M$ is a separating hypersurface, we may write $N \setminus \overline{M}$ as two disjoint open sets, $E$ and $ N \setminus E$, with common boundary $\overline{M}$.

    \item We say that $p \in \mathrm{Sing}(M)$ is an \textit{isolated singularity} of $M$ if there exists some $R_p > 0$ such that  $\mathrm{Sing}(M) \cap \overline{B_{R_p}}(p) = \{p\}$, i.e.~such that $M \cap \overline{B_{R_p}}(p)$ is smooth. Moreover, we say that a multiplicity one tangent cone, $C_p$, to $M$ at an isolated singularity, $p \in \mathrm{Sing}(M)$, is a \textit{regular tangent cone} if $\mathrm{Sing}(C_p) = \{0\}$, where $0$ here denotes the origin in $\mathbb{R}^{n+1}$. We note that the tangent cone to $M$ at an isolated singularity with regular tangent cone is necessarily unique by the work of \cite{simon}.

    \item A measurable set $F \subset N$ is a \textit{Caccioppoli set} if
    \begin{equation*}
    \mathrm{Per}_g(F) = \sup \left\{\int_E div_g \varphi \: \bigg| \: \varphi \in \Gamma(TN), ||\varphi||_\infty \leq 1 \right\} < \infty,
    \end{equation*}
    where $div_g$ is the divergence with respect to the metric $g$, $\Gamma(TN)$ is the set of vector fields on $N$ and $||\cdot ||_\infty$ denotes the supremum norm. We denote by $\mathcal{C}(N)$ the set of Caccioppoli sets in $N$. 
    
    \item By De Giorgi’s Structure Theorem we have
    that the distributional derivative $D_g \chi_F$ (a Radon measure) of a Caccioppoli set $F$ is given by $D_g \chi_F = -\nu_F \mathcal{H}^n \llcorner \partial^{*} F$, where $\partial^{*} F$ is the reduced boundary of $F$ (an
    $n$-rectifiable set), $\mathcal{H}^n$ is the $n$-dimensional Hausdorff measure, $\nu_F$ is the normal direction to $\partial^{*}F$ pointing inside $F$ defined $\mathcal{H}^n$-a.e. and $\chi_F$ is the indicator function of the set $F$. Our main reference for Caccioppoli sets will be \cite{maggi_2012}. 

    \item We define the following prescribed mean curvature functional on measurable subsets of $N$: for a measurable set $F \subset N$ we let
    \begin{equation*}
    \mathcal{F}_\lambda(F) = \mathrm{Per}_g(F) - \lambda \mathrm{Vol}_g(F) + \frac{\lambda}{2}\mathrm{Vol}_g(N),
    \end{equation*}
    where $\mathrm{Vol}_g$ denotes the $\mathcal{H}^{n+1}$ measure with respect to the metric $g$. This definition differs from that of Section \ref{sec: introduction} by the addition of the constant $\frac{\lambda}{2}\mathrm{Vol}_g(N)$. Note however this does not affect the set of critical points of the functional $\mathcal{F}_\lambda$ and is made purely for convenience of notation in forthcoming computations.

    \item With the above two definitions in mind, we take throughout $\nu$ to be the global unit normal to $M$ pointing into $E$ and write $M = \partial^* E$; by viewing $M = \partial^* E$ as the reduced boundary of the Caccioppoli set $E$ we have that $E$ is a critical point for $\mathcal{F}_\lambda$.

    \item We will use the notions of integral currents and varifolds throughout the paper; a reference for the notation and definitions used here may be found in \cite{simongmt}.

    \item Let $\mathrm{dist}_N$ denote the Riemannian distance on $N$ and define the function $d_{\overline{M}}$ on $N$ by setting $d_{\overline{M}} (\cdot) = \mathrm{dist}_N (\cdot, \overline{M})$; we then have that $d_{\overline{M}}$ is Lipschitz on $N$ (with Lipschitz constant equal to $1$) and, as $N$ is complete, for each $x \in N$ there exists a geodesic realising the value $d_{\overline{M}} (x)$. Let $d(N) = \sup_{x,y \in N} d_N(x,y)$ be the diameter of $N$, which is finite as $N$ is closed.

    \item We fix $R_l > 0$ such that for every $R \in (0,R_l)$ and each point $p \in N$ we have that the ball $B_R(p) \subset N$ of radius $R$ centred at a point $p \in N$ is $2$-bi-Lipschitz diffeomorphic, via a geodeisic normal coordinate chart, to the Euclidean ball, $B^{\mathbb{R}^{n+1}}_R(0) \subset \mathbb{R}^{n+1}$ of radius $R$ centred at the origin in $\mathbb{R}^{n+1}$.

    \item For $\varepsilon \in (0,1)$ we denote the \textit{Allen--Cahn energy} of a function $u \in W^{1,2}(N)$ by 
    \begin{equation*}
    \mathcal{E}_\varepsilon(u) =  \frac{1}{2\sigma} \int_N e_\varepsilon(u) = \frac{1}{2\sigma} \int_N \frac{\varepsilon}{2}|\nabla u|^2 + \frac{W(u)}{\varepsilon}, 
    \end{equation*} 
    where $W$ is a $C^2$ double well potential with non-degenerate minima at $\pm 1$, $c_W \leq W''(t) \leq C_W$ for constants $c_W, C_W > 0$ for all $t \in \mathbb{R} \setminus [-2,2]$ and $\sigma = \int_{-1}^1 \sqrt{W(t)/2} \, dt$. We then consider the following functional, which we shall refer to simply as the \textit{energy}, defined on functions $u \in W^{1,2}(N)$ by 
    \begin{equation*}
    \mathcal{F}_{\varepsilon,\lambda}(u) = \mathcal{E}_\varepsilon(u) - \frac{\lambda}{2}\int_N u.
    \end{equation*}

\end{itemize}

\subsection{Allen--Cahn min-max}\label{subsec: Allen--Cahn minmax}

Let $(N^{n+1},g)$ a closed, connected Riemannian manifold of dimension $n+1 \geq 3$. The Allen--Cahn min-max procedure in \cite{bellettini-wickramasekera} produces a hypersurface with mean curvature prescribed by an arbitrary non-negative Lipschitz function, and provides sharp dimension bounds on the singular set.

\bigskip

We recall this procedure in the case relevant to this work, in which the metric is assumed to have positive Ricci curvature, and the prescribing function is a non-negative constant $\lambda$; for producing a candidate hypersurface of constant mean curvature $\lambda < 0$ one can consider $-\lambda$ in the results below. The constant mean curvature hypersurfaces produced will, after proving Theorem \ref{thm: local minimisation in positive ricci}, be candidates for our surgery procedure established in Section \ref{sec: cmc surgery}.

\bigskip

For $\varepsilon \in (0,1)$ there exist two constant functions, $a_\varepsilon$ and $b_\varepsilon$, on $N$, stable critical points of $\mathcal{F}_{\varepsilon,\lambda}$ with $-1 < a_\varepsilon < -1 + c\varepsilon$, $1 < b_\varepsilon < 1 + c\varepsilon$ and $a_\varepsilon \rightarrow -1$, $b_\varepsilon \rightarrow 1$ on $N$ as $\varepsilon \rightarrow 0$, where $c > 0$ depends on $W$ and $\lambda$. These functions are constructed in \cite[Section 5]{bellettini-wickramasekera} by means of the negative gradient flow, through constant functions, of $\mathcal{F}_{\varepsilon,\lambda}$ starting at $\pm 1$. In particular, there are continuous paths of functions in $W^{1,2}(N)$ connecting $a_\varepsilon$ to $-1$ and $b_\varepsilon$ to $1$, provided by the negative gradient flow of $\mathcal{F}_{\varepsilon,\lambda}$, with energy along these paths bounded from above by $\mathcal{F}_{\varepsilon,\lambda}(-1)$ and $\mathcal{F}_{\varepsilon,\lambda}(1)$ respectively.

\bigskip

For every $\varepsilon \in (0,1)$ a min-max critical point, $u_{\varepsilon} \in W^{1,2}(N)$,
of $\mathcal{F}_{\varepsilon,\lambda}$ may be constructed, with $\sup_N |u_{\varepsilon}|$ uniformly bounded and $\mathcal{E}_{\varepsilon}(u_{\varepsilon})$ uniformly bounded from above and below by positive constants. This is done by applying a mountain pass lemma, for paths between the two stable critical points $a_\varepsilon$ and $b_\varepsilon$, based
on the fact that $\mathcal{F}_{\varepsilon,\lambda}$ satisfies a Palais–Smale condition. The Morse index of the $u_{\varepsilon}$
will then automatically be equal $1$ by virtue of the fact that if $\mathrm{Ric}_g > 0$ then, as noted in \cite[Remark 6.7]{bellettini-wickramasekera}, $a_\varepsilon$ and $b_\varepsilon$ are the only stable critical points of $\mathcal{F}_{\varepsilon,\lambda}$.

\bigskip

By general principles, the bounds above imply
that there exist a sequence $\varepsilon_j \rightarrow 0$, a non-zero Radon measure, $\mu$, on $N$ and a function, $u_\infty \in BV(N)$
with $u_\infty = \pm 1$ for a.e. $x \in N$, such that for the min-max critical points, $\{u_{\varepsilon_j}\}_{j = 1}^\infty$, we have as $\varepsilon_j \rightarrow 0$ that $e_{\varepsilon_j}(u_{\varepsilon_j}) \rightarrow \mu$
weakly as measures and $u_{\varepsilon_j} \rightarrow u_{\infty}$ strongly in $L^1(N)$. Defining $E = \{ u_\infty = 1 \}$, we note that $E$ is a Caccioppoli set with its reduced boundary $\partial^* E \subset \mathrm{Spt}\mu$; moreover, as $\mathrm{Ric}_g > 0$ we have that $E \neq \emptyset$ by \cite[Remark 6.7]{bellettini-wickramasekera}.

\bigskip

In \cite{bellettini-wickramasekera}, relying on the combined works of \cite{Hutchinson-Tonegawa} and \cite{Roger-Tonegawa}, it is then established
that $\mu$ is the weight measure of an integral $n$-varifold $V$ with the following properties:
\begin{itemize}
    \item $V = V_0 + V_\lambda$.

    \item $V_0$ is a (possibly zero) stationary integral $n$-varifold on $N$ with $\mathrm{Sing}(V_0)$ empty if $2 \leq n \leq 6$, $\mathrm{Sing}(V_0)$ discrete if $n = 7$ and $\mathrm{Sing}(V_0)$ of Hausdorff dimension $\leq n - 7$ when $n \geq 8$.

    \item $V_\lambda = |\partial^* E| \neq 0$ (by \cite[Remark 6.7]{bellettini-wickramasekera} as $\mathrm{Ric}_g > 0$), the multiplicity one $n$-varifold associated with the reduced boundary $\partial^* E$. Then $\mathrm{Spt}(V_\lambda) \setminus \mathrm{Sing}(V_\lambda) = \overline{\partial^* E} \setminus \mathrm{Sing}(V_\lambda)$ is a quasi-embedded hypersurface of constant mean curvature $\lambda$ with respect to the unit normal pointing into $E$; moreover, $\mathrm{Sing}(V_\lambda)$ is empty if $2 \leq n \leq 6$, $\mathrm{Sing}(V_\lambda)$ is discrete if $n = 7$ and $\mathrm{Sing}(V_\lambda)$ is of Hausdorff dimension $\leq n - 7$ when $n \geq 8$.
\end{itemize}
In the above, quasi-embedded means that near every non-embedded point of $\overline{\partial^* E} \setminus \mathrm{Sing}(V_\lambda)$, $\overline{\partial^* E}$ is the union of two embedded $C^{2,\alpha}$ disks intersecting tangentially, with each disk lying on one side of the other. Furthermore, for a varifold $W$ (for example $V_0$ and $V_\lambda$ above) by letting $\text{gen-reg}(W)$ be the set of quasi-embedded points of $\mathrm{\mathrm{Spt}}||W||$ we have defined $\mathrm{Sing}(W) = \mathrm{\mathrm{Spt}}||W|| \setminus \text{gen-reg}(W)$ (thus for an embedded hypersurface this agrees with the definition in Subsection \ref{subsec: notation} above). For a more detailed description of the definitions and results above we refer to \cite[Sections 3 and 4]{bellettini-wickramasekera}.

\bigskip

As in \cite[Theorem 2]{Bellettini-Workman}, the path we exhibit, for all $\varepsilon > 0$ sufficiently small, in Subsection \ref{subsec: sliding the one-dimensional profile on M} with the upper energy bounds provided by Lemma \ref{lem: sliding path upper energy bounds} (depicted by the dashed lines in Figure \ref{fig: path}) between $1$ and $-1$, along with short paths of constant functions connecting $1$ to $b_\varepsilon$ and $-1$ to $a_\varepsilon$, proves that $V_0 = 0$ (i.e.~that the min-max procedure produces no minimal piece in manifolds with positive Ricci curvature). Using this we then note that as we have $e_{\varepsilon_j} \rightarrow \mu$ as $\varepsilon_j \rightarrow 0$ and $E = \{ u_\infty = 1 \}$ we have
\begin{equation}\label{eqn: min-max critical points converge to surface energy}
    \mathcal{F}_{\varepsilon_j,\lambda}(u_{\varepsilon_j}) \rightarrow \mathcal{F}_{\lambda}(E) \text{ as } \varepsilon_j \rightarrow 0,
\end{equation}
i.e.~that the constant mean curvature hypersurface attains the min-max value. In the proof of Theorem \ref{thm: local minimisation in positive ricci}, under the contradiction assumption that our candidate hypersurface produced by the above procedure does not satisfy a local minimisation property, we will exploit (\ref{eqn: min-max critical points converge to surface energy}) by constructing continuous paths of functions in $W^{1,2}(N)$ for all $\varepsilon > 0$ sufficiently small, admissible in the min-max construction above, with energy along the paths bounded above by a value strictly below $\mathcal{F}_\lambda(E)$ (independently of $\varepsilon$); thus violating the min-max characterisation of $E$.

\bigskip

In fact, by \cite[Theorem 4]{Bellettini-Workman}, for $\lambda \neq 0$ we have that $\partial^*E$ is embedded (rather than quasi-embedded). Moreover, $\partial^* E$ is connected, has index $1$ and is  separating in the sense that $N \setminus \partial E$ may be written as the union of two disjoint open sets whose common boundary is $\partial E$. The same results hold in the case that $\lambda = 0$ by combining \cite[Theorem A]{guaraco} with \cite[Theorem 1.8]{bellettini}.

\bigskip

To summarise, we know that for $(N^{n+1},g)$ a closed, connected Riemannian manifold of dimension $n+1  \geq 3$, with positive Ricci curvature, the properties of $M$ as stated in Subsection \ref{subsec: notation} hold for any constant mean curvature hypersurface produced from the Allen--Cahn min-max construction of \cite{bellettini-wickramasekera} with prescribing function taken to be a constant.

\subsection{Strategy}\label{subsec: strategy}

The main strategy of the present work can be split into three distinct steps:
\begin{enumerate}
    \item \textbf{Surgery procedure:} We show how to perturb constant mean curvature hypersurfaces that are locally $\mathcal{F}_\lambda$-minimising around isolated singularities with regular tangent cones, resulting in a smooth hypersurface with constant mean curvature.

    \item \textbf{Functions to geometry:} We relate the local geometric behaviour of hypersurfaces produced by the Allen--Cahn min-max procedure to the $\varepsilon \rightarrow 0$ energy properties of specific $W^{1,2}(N)$ functions.
    
    \item \textbf{Paths of functions:} By exhibiting an admissible min-max path, we establish the energy properties for the functions from Step $2$ hold, which enables us to conclude that hypersurfaces generated through the Allen-Cahn min-max procedure in positive Ricci curvature are locally $\mathcal{F}_\lambda$-minimising around isolated singularities.
\end{enumerate}

We now sketch these steps in more detail: 

\bigskip

1.~\textbf{Surgery procedure:}
As mentioned in Section \ref{sec: introduction}, a local foliation around a hypersurface provides a natural way to perturb away an isolated singularity via a surgery procedure. Using the work of \cite{hardt-simon}, in both \cite{Smale} and \cite{C-L-S}, isolated singularities with regular tangent cones to locally area-minimising hypersurfaces are perturbed away by a “cut-and-paste" gluing along with a conformal change of the initial metric.

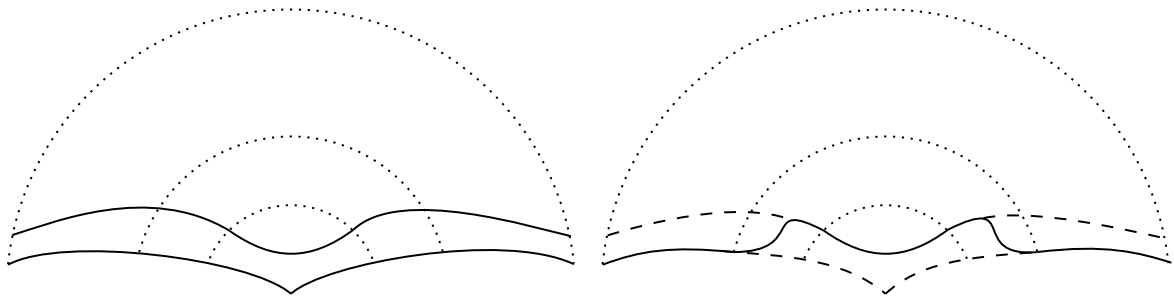
\begin{figure}[H]
    \centering
\captionsetup{justification=justified,margin=1cm}
\begin{tikzpicture}[scale=-0.02, xscale=-1]

\draw    (288.61,125.31) .. controls (317.94,145.98) and (342.61,145.98) .. (371.27,124.64) ;
\draw  [draw opacity=0][dash pattern={on 0.84pt off 2.51pt}] (275.68,143.71) .. controls (284.78,122.93) and (305.53,108.42) .. (329.67,108.42) .. controls (354.96,108.42) and (376.53,124.35) .. (384.88,146.73) -- (329.67,167.33) -- cycle ; \draw  [dash pattern={on 0.84pt off 2.51pt}] (275.68,143.71) .. controls (284.78,122.93) and (305.53,108.42) .. (329.67,108.42) .. controls (354.96,108.42) and (376.53,124.35) .. (384.88,146.73) ;  
\draw  [draw opacity=0][dash pattern={on 0.84pt off 2.51pt}] (228.76,139.6) .. controls (240.91,95.27) and (281.49,62.71) .. (329.67,62.71) .. controls (377.81,62.71) and (418.36,95.22) .. (430.55,139.49) -- (329.67,167.33) -- cycle ; \draw  [dash pattern={on 0.84pt off 2.51pt}] (228.76,139.6) .. controls (240.91,95.27) and (281.49,62.71) .. (329.67,62.71) .. controls (377.81,62.71) and (418.36,95.22) .. (430.55,139.49) ;  
\draw    (144.33,128.33) .. controls (171.67,120) and (235.4,93.4) .. (288.61,125.31) ;
\draw    (141.67,148) .. controls (177.5,128.5) and (301.75,143.5) .. (329.67,167.33) ;
\draw    (329.67,167.33) .. controls (351.75,147) and (470.37,125.37) .. (517.67,147.67) ;
\draw    (371.27,124.64) .. controls (402.2,95.4) and (491.67,123.67) .. (515.67,129) ;
\draw  [draw opacity=0][dash pattern={on 0.84pt off 2.51pt}] (141.61,147.67) .. controls (151.45,52.49) and (231.89,-21.73) .. (329.67,-21.73) .. controls (427.21,-21.73) and (507.5,52.13) .. (517.65,146.98) -- (329.67,167.33) -- cycle ; \draw  [dash pattern={on 0.84pt off 2.51pt}] (141.61,147.67) .. controls (151.45,52.49) and (231.89,-21.73) .. (329.67,-21.73) .. controls (427.21,-21.73) and (507.5,52.13) .. (517.65,146.98) ;  
\end{tikzpicture}
\hspace{1mm}
    \begin{tikzpicture}[scale=-0.02, xscale=-1]

\draw    (288.61,125.31) .. controls (317.94,145.98) and (342.61,145.98) .. (371.27,124.64) ;
\draw  [draw opacity=0][dash pattern={on 0.84pt off 2.51pt}] (275.68,143.71) .. controls (284.78,122.93) and (305.53,108.42) .. (329.67,108.42) .. controls (354.06,108.42) and (374.99,123.24) .. (383.94,144.37) -- (329.67,167.33) -- cycle ; \draw  [dash pattern={on 0.84pt off 2.51pt}] (275.68,143.71) .. controls (284.78,122.93) and (305.53,108.42) .. (329.67,108.42) .. controls (354.06,108.42) and (374.99,123.24) .. (383.94,144.37) ;  
\draw  [draw opacity=0][dash pattern={on 0.84pt off 2.51pt}] (228.76,139.6) .. controls (240.91,95.27) and (281.49,62.71) .. (329.67,62.71) .. controls (377.85,62.71) and (418.42,95.27) .. (430.58,139.6) -- (329.67,167.33) -- cycle ; \draw  [dash pattern={on 0.84pt off 2.51pt}] (228.76,139.6) .. controls (240.91,95.27) and (281.49,62.71) .. (329.67,62.71) .. controls (377.85,62.71) and (418.42,95.27) .. (430.58,139.6) ;  
\draw    (228.76,139.6) .. controls (277.75,139.5) and (246.25,103.5) .. (288.61,125.31) ;
\draw  [dash pattern={on 4.5pt off 4.5pt}]  (145,128.5) .. controls (172.33,120.17) and (230.75,105) .. (266.75,118) ;
\draw  [dash pattern={on 4.5pt off 4.5pt}]  (221.75,139) .. controls (260.25,142) and (301.75,143.5) .. (329.67,167.33) ;
\draw    (141.67,148) .. controls (167,138.5) and (187.5,135.5) .. (228.76,139.6) ;
\draw    (371.27,124.64) .. controls (419.75,100.5) and (383.75,143.5) .. (430.58,139.6) ;
\draw  [dash pattern={on 4.5pt off 4.5pt}]  (329.67,167.33) .. controls (351.75,147) and (381.75,143) .. (430.58,139.6) ;
\draw    (430.58,139.6) .. controls (476.25,134) and (498.25,140) .. (519.67,146.67) ;
\draw  [dash pattern={on 4.5pt off 4.5pt}]  (394.75,117) .. controls (427.75,110) and (491.5,125.17) .. (515.5,130.5) ;
\draw  [draw opacity=0][dash pattern={on 0.84pt off 2.51pt}] (141.61,147.67) .. controls (151.45,52.49) and (231.89,-21.73) .. (329.67,-21.73) .. controls (427.21,-21.73) and (507.5,52.13) .. (517.65,146.98) -- (329.67,167.33) -- cycle ; \draw  [dash pattern={on 0.84pt off 2.51pt}] (141.61,147.67) .. controls (151.45,52.49) and (231.89,-21.73) .. (329.67,-21.73) .. controls (427.21,-21.73) and (507.5,52.13) .. (517.65,146.98) ;  
\end{tikzpicture}
    \caption{In both graphics the innermost two thinly dotted curves depict an annulus around an isolated singularity of a constant mean curvature hypersurface and the outermost thinly dotted curves depict the boundary of the ball in which the foliations will be defined. In the left-hand graphic the lower solid curve depicts an isolated singularity (assumed to have regular tangent cone) of a constant mean curvature hypersurface and the upper solid curve depicts, under the assumption that the lower singular hypersurface is locally $\mathcal{F}_\lambda$-minimising, a smooth constant mean curvature hypersurface in the one-sided foliation provided by \cite{Kostas23}. The solid curve in the right-hand graphic depicts the smooth hypersurface constructed by gluing both of the hypersurfaces in the left-hand graphic. This gluing is done in such a way that the hypersurface outside of the larger ball in the annulus agrees with the singular one and inside of the smaller ball in the annulus agrees with the hypersurface provided by the foliation; with the thick dashed lines depicting the pieces of the hypersurfaces in the left-hand graphic not included in the construction in the right. The resulting construction in the right-hand graphic is then, after a suitable metric perturbation, the desired smooth constant mean curvature hypersurface.}
    \label{fig: surgery}
\end{figure}

We take a similar approach here. Near an isolated singularity with regular tangent cone of a locally $\mathcal{F}_\lambda$-minimising hypersurface of constant mean curvature $\lambda$, in \cite{Kostas23} it is shown that there is a foliation around this hypersurface (to either side) by smooth hypersurfaces of constant mean curvature $\lambda$. In Section \ref{sec: cmc surgery}, using this foliation, we establish a surgery procedure to perturb away isolated singularities with regular tangent cones of locally $\mathcal{F}_\lambda$-minimising constant mean curvature hypersurfaces.

\bigskip

This is achieved by first constructing, via a “cut-and-paste" gluing, a smooth hypersurface close in Hausdorff distance to the original one inside a chosen ball (which may be taken arbitrarily small), with constant mean curvature $\lambda$ outside of an annulus and inside which the original hypersurface is assumed to be smooth; this construction is depicted in Figure \ref{fig: surgery}.

\bigskip

It then remains to perturb the metric inside of this annulus so that the newly constructed hypersurface has constant mean curvature $\lambda$ everywhere. This is achieved by an appropriate choice of function for conformal change of the original metric; with the resulting metric arbitrarily close, in the $C^{k,\alpha}$-norm, to the original. The result is then a smooth hypersurface of constant mean curvature $\lambda$ with respect to the new metric, agreeing with the original hypersurface outside of the chosen ball.

\bigskip

2.~\textbf{Functions to geometry:} 
We aim to show that, when the ambient metric is assumed to have positive Ricci curvature, the hypersurfaces of constant mean curvature $\lambda$ produced by the Allen--Cahn min-max procedure in \cite{bellettini-wickramasekera} are in fact locally $\mathcal{F}_\lambda$-minimising. In order to do this we first relate the local $\mathcal{F}_\lambda$-minimisation we desire to a property about the energy of specific $W^{1,2}(N)$ functions defined from such a hypersurface as the parameter $\varepsilon \rightarrow 0$.

\bigskip

Rather than work directly with the min-max critical points of $\mathcal{F}_{\varepsilon,\lambda}$ produced in \cite{bellettini-wickramasekera}, we instead introduce, in Subsection \ref{subsec: one dimensional profile}, a function $v_\varepsilon \in W^{1,2}(N)$, which we call the \textit{one-dimensional profile}. This function is constructed by placing a truncated version of the one-dimensional solution to the Allen--Cahn equation, $\overline{\mathbb{H}}^\varepsilon$, in the normal direction to an underlying hypersurface, $M$, of constant mean curvature $\lambda$ as in Subsection \ref{subsec: notation}:
\begin{equation*}
    v_\varepsilon = \overline{\mathbb{H}}^\varepsilon \circ d^\pm_{\overline{M}},
\end{equation*}
where here $d^\pm_{\overline{M}}$ is the Lipschitz \textit{signed distance function} to $\overline{M}$, taking positive values in $E$ and negative values in $N \setminus E$. The function $v_\varepsilon$ is then shown to act as an approximation of the hypersurface $M$ in the sense that, analogously to (\ref{eqn: min-max critical points converge to surface energy}), we have
\begin{equation}\label{eqn: 1d profile approx cmc energy}
    \mathcal{F}_{\varepsilon,\lambda}(v_\varepsilon) \rightarrow \mathcal{F}_\lambda(E) \text{ as } \varepsilon \rightarrow 0.
\end{equation}
In Subsection \ref{subsec: function minimisation}, for an isolated singularity, $p \in \mathrm{Sing}(M)$, $\varepsilon > 0$ sufficiently small and radius $\rho > 0$, we minimise $\mathcal{F}_{\varepsilon,\lambda}$ over a class of functions $\mathcal{A}_{\varepsilon,\rho}(p)$. The set $\mathcal{A}_{\varepsilon,\rho}(p)$ is, roughly speaking, all $W^{1,2}(N)$ functions agreeing with $v_\varepsilon$ outside of the ball of radius $\rho$ centred at $p$. The minimiser of this problem is thus a function, $g_\varepsilon \in W^{1,2}(N)$, that agrees with $v_\varepsilon$ outside of the ball $B_\rho(p)$ and is such that
\begin{equation}\label{eqn: minimiser function}
    \mathcal{F}_{\varepsilon,\lambda}(g_\varepsilon) = \inf_{u \in \mathcal{A}_{\varepsilon,\rho}(p)} \mathcal{F}_{\varepsilon,\lambda}(u).
\end{equation}
Note that the notation used for $g_\varepsilon$ suppresses the dependence on $p \in \mathrm{Sing}(M)$ and $\rho > 0$ used in the construction; in each instance that the functions $g_\varepsilon$ are utilised the choice of isolated singularity and radius in question will be made explicit. We then produce, in Subsection \ref{subsec: recovery functions}, a sequence of “recovery functions", admissible in the minimisation problem that produced $g_\varepsilon$ above, for any local $\mathcal{F}_\lambda$-minimiser. Precisely, in the vein of \cite{kohn_sternberg_1989}, for each local $\mathcal{F}_\lambda$-minimiser, $F \in \mathcal{C}(N)$, agreeing with $E$ outside of $B_{\frac{\rho}{2}}(p)$, we show that there exists a sequence of functions, $f_\varepsilon \in \mathcal{A}_{\varepsilon,\rho}(p)$, for all $\varepsilon > 0$ sufficiently small such that
\begin{equation}\label{eqn: recovery functions approx min}
    \mathcal{F}_{\varepsilon,\lambda}(f_\varepsilon) \rightarrow \mathcal{F}_\lambda(F) \text{ as } \varepsilon \rightarrow 0.
\end{equation}
As $f_\varepsilon \in \mathcal{A}_{\varepsilon,\rho}(p)$ we conclude that by (\ref{eqn: minimiser function}) we have $\mathcal{F}_{\varepsilon,\lambda}(g_\varepsilon) \leq \mathcal{F}_{\varepsilon,\lambda}(f_\varepsilon)$. In particular, by (\ref{eqn: 1d profile approx cmc energy}), (\ref{eqn: minimiser function}) and (\ref{eqn: recovery functions approx min}), it holds that
\begin{equation}\label{eqn: function to geometry property}
     \mathcal{F}_{\varepsilon,\lambda}(v_\varepsilon) \leq \mathcal{F}_{\varepsilon,\lambda}(g_\varepsilon) + \tau_\varepsilon \text{ for some sequence } \tau_\varepsilon \rightarrow 0 \text{ as } \varepsilon \rightarrow 0,
\end{equation}
 if and only if
\begin{equation*}
    \mathcal{F}_\lambda(E) \leq \mathcal{F}_\lambda(F),
\end{equation*}
so that $E$ is $\mathcal{F}_\lambda$-minimising in $B_{\frac{\rho}{2}}(p)$. In this manner we have related, via (\ref{eqn: function to geometry property}), the $\varepsilon \rightarrow 0$ energy behaviour of specific $W^{1,2}(N)$ functions, namely $v_\varepsilon$ and $g_\varepsilon$, defined from a hypersurface as produced by the Allen--Cahn min-max procedure, to the geometric behaviour of the underlying hypersurface; precisely, (\ref{eqn: function to geometry property}) holds if and only if $E$ is locally $\mathcal{F}_\lambda$-minimising. In order to prove Theorem \ref{thm: local minimisation in positive ricci} we will turn our attention, in Step 3 below, to establishing that (\ref{eqn: function to geometry property}) holds for all constant mean curvature hypersurfaces produced by the Allen--Cahn min-max procedure in manifolds with positive Ricci curvature.

\bigskip

In order to produce the “recovery functions" above we first establish a local smoothing procedure for Caccioppoli sets that are smooth in an annular region. This is done by another “cut-and-paste" argument using Sard's Theorem on the level sets of mollified indicator functions for the Caccioppoli set, full details of which can be found in Subsection \ref{subsec: local smoothing of caccioppoli sets}. 

\bigskip

We emphasise that this local smoothing procedure we exhibit for Caccioppoli sets is not, in and of itself, sufficient to establish Theorem \ref{thm: generic regularity in dimension 8 in positive ricci}. The reason for this is that the local smoothing we produce records no information about the mean curvature near the isolated singularity, and this lack of information on the mean curvature makes it difficult to perturb the metric. The foliation of \cite{Kostas23} however ensures there exists a sequence of smooth hypersurfaces of constant mean curvature $\lambda$ converging to the singular hypersurface, allowing for the perturbation to be constructed in the manner as described in Step 1.

\bigskip

3.~\textbf{Paths of functions:} Similarly to the strategy employed in previous works on hypersurfaces produced by the Allen--Cahn min-max procedure, for example in \cite{bellettini}, \cite{bellettini-wickramasekera}, and \cite{Bellettini-Workman}, the proof of Theorem \ref{thm: local minimisation in positive ricci} is achieved by exhibiting a suitable continuous path in $W^{1,2}(N)$. 

\bigskip

Under the assumption that a hypersurface produced by the min-max procedure violates a desired property, one basic idea is to exploit its min-max characterisation as follows. If a path admissible in the min-max procedure may be produced, with energy along this path bounded above by a constant strictly less than the min-max value, then one contradicts the assumption that such a hypersurface arose from the min-max. Thus, the desired property must hold for all hypersurfaces produced by the min-max procedure.

\bigskip

We first emphasise that the paths we construct in $W^{1,2}(N)$ reflect the underlying geometry imposed by the 
assumption of positive Ricci curvature. We denote the super-level sets and level sets of the signed distance function, for each $s \in \mathbb{R}$, by
\begin{equation*}
E(s) = \{ x \in N \: | \: d^\pm_{\overline{M}}(x) > s \} \text{ and } \Gamma(s) = \{ x \in N \: | \: d^\pm_{\overline{M}}(x) = s \},
\end{equation*}
respectively; so that $E(0) = E$ and $\Gamma(0) = \overline{M}$. Formally computing we have, for almost every $s \in \mathbb{R}$, that 
\begin{equation}\label{eqn: formal geometry derivatives}
    \frac{d}{ds}\mathcal{H}^n(\Gamma(s)) = - \int_{\Gamma(s)} H(x,s) \, d\mathcal{H}^n(x) \text{ and } \frac{d}{ds}\mathrm{Vol}_g(E(s)) = \mathcal{H}^n(\Gamma(s)),
\end{equation}
where $H(x,s)$ denotes the mean curvature of the level set $\Gamma(s)$ at a point $x$. By denoting $m = \min_N \mathrm{Ric}_g > 0$, the assumption of positive Ricci curvature implies the following relation between the mean curvature of $M$ and the level sets $\Gamma(s)$:
\begin{equation}\label{eqn: mean curavture intro relation}
    \begin{cases}
    H(x,s) \geq \lambda + ms \text{ for } s > 0\\
    H(x,0) = \lambda\\
    H(x,s) \leq \lambda + ms \text{ for } s < 0
    \end{cases},
\end{equation}
see Subsection \ref{subsec: level sets of distance function}. Therefore, by (\ref{eqn: formal geometry derivatives}) and (\ref{eqn: mean curavture intro relation}), for each $t \in \mathbb{R} \setminus \{0\}$ we compute that
\begin{align*}
    \mathcal{F}_\lambda(E(t)) - \mathcal{F}_\lambda(E) &= \int_0^t \frac{d}{ds}\mathcal{H}^n(\Gamma(s)) - \lambda \frac{d}{ds}\mathrm{Vol}_g(E(s)) \, ds\\
    &= \int_0^t \int_{\Gamma(s)} \lambda - H(x,s) \, d\mathcal{H}^n(x) \, ds < 0,
\end{align*}
and so from the assumption of positive Ricci curvature we conclude that for each $t \in \mathbb{R} \setminus \{0\}$ we have
\begin{equation}\label{eqn: geometry term}
    \mathcal{F}_\lambda(E(t)) < \mathcal{F}_\lambda(E).
\end{equation}
We then consider the continuous path of \textit{sliding functions}, $v^t_\varepsilon \in W^{1,2}(N)$, produced by sliding the zero level set of $v_\varepsilon$ from $\overline{M}$ to $\Gamma(t)$ for each $t \in \mathbb{R}$; that is
\begin{equation*}
    v^t_\varepsilon = \overline{\mathbb{H}}^\varepsilon \circ (d^\pm_{\overline{M}} - t),
\end{equation*}
so that for each $t \in \mathbb{R}$ we have $\{ v^t_\varepsilon = 0 \} = \Gamma(t)$, $v^0_\varepsilon = v_\varepsilon$, $v^{2d(N)}_\varepsilon = -1$ and $v^{-2d(N)} = +1$ (whenever $\varepsilon > 0$ is sufficiently small). The geometric relation (\ref{eqn: geometry term}), induced by the positive Ricci curvature assumption, then translates to the level of functions and allows us to compute, in Lemma \ref{lem: sliding path upper energy bounds}, that the $v^t_\varepsilon$ have the following property:
\begin{equation*}
    \mathcal{F}_{\varepsilon,\lambda}(v^t_\varepsilon) \leq  \mathcal{F}_\lambda(v_\varepsilon) + E(\varepsilon) \text{ where } E(\varepsilon) \rightarrow 0 \text{ as } \varepsilon \rightarrow 0.
\end{equation*}
As the nodal sets of the $v^t_\varepsilon$ are the level sets of the signed distance function to $\overline{M}$, the functions $v^t_\varepsilon$ may be seen as a path of functions analogous to a sweep-out of $N$ by the level sets of the signed distance function to $\overline{M}$. 

\bigskip

Thus, the path provided by the sliding functions, along with the energy reducing paths from $-1$ and $+1$ provided by negative gradient flow of the energy to $a_\varepsilon$ and $b_\varepsilon$ respectively, provides a “recovery path" for the value $\mathcal{F}_\lambda(E)$; this path connects $a_\varepsilon$ to $b_\varepsilon$, passing through $v_\varepsilon$, with the maximum value of the energy along this path approximately $\mathcal{F}_\lambda(E)$ (by virtue of (\ref{eqn: 1d profile approx cmc energy})). Approximate upper energy bounds along this path are depicted by the thick dashed lines in Figure \ref{fig: path} below. In this manner, as mentioned in Subsection \ref{subsec: Allen--Cahn minmax}, such a path establishes that the Allen--Cahn min-max procedure in positive Ricci curvature produces no minimal piece.
\begin{figure}[H]
\centering
\captionsetup{justification=justified,margin=1cm}
\begin{tikzpicture}[x=0.75pt,y=0.75pt,yscale=-1,xscale=1]

\draw    (121.67,269) -- (121.67,32.67) ;
\draw [shift={(121.67,29.67)}, rotate = 90] [fill={rgb, 255:red, 0; green, 0; blue, 0 }  ][line width=0.08]  [draw opacity=0] (8.93,-4.29) -- (0,0) -- (8.93,4.29) -- cycle    ;
\draw    (7.67,239.67) -- (588,239.34) ;
\draw [shift={(591,239.33)}, rotate = 179.97] [fill={rgb, 255:red, 0; green, 0; blue, 0 }  ][line width=0.08]  [draw opacity=0] (8.93,-4.29) -- (0,0) -- (8.93,4.29) -- cycle    ;
\draw  [dash pattern={on 0.84pt off 2.51pt}]  (122,59.67) -- (579.67,59) ;
\draw    (500.67,188.33) -- (559.67,239) ;
\draw    (202.67,187.67) -- (141,239) ;
\draw  [dash pattern={on 0.84pt off 2.51pt}]  (122.33,138.4) -- (580.33,136.67) ;
\draw  [dash pattern={on 4.5pt off 4.5pt}]  (350.83,59.33) -- (500.67,188.33) ;
\draw  [dash pattern={on 4.5pt off 4.5pt}]  (350.83,59.33) -- (202.67,187.67) ;
\draw  [dash pattern={on 0.84pt off 2.51pt}]  (122,187.33) -- (201.67,188.33) ;
\draw    (257.33,188.33) .. controls (287.67,59.67) and (286.33,215.67) .. (309,219.67) .. controls (331.67,223.67) and (328.33,108.33) .. (351.23,102.67) ;
\draw    (351.23,102.67) .. controls (372.33,109) and (371,225.67) .. (392.33,219.67) .. controls (413.67,213.67) and (412.33,58.33) .. (445,188.33) ;
\draw [line width=0.75]  [dash pattern={on 0.84pt off 2.51pt}]  (351.23,102.67) -- (350.33,239.67) ;
\draw    (201.67,188.33) -- (257.33,188.33) ;
\draw  [dash pattern={on 0.84pt off 2.51pt}]  (122.4,103) -- (580.07,102.33) ;
\draw  [dash pattern={on 0.84pt off 2.51pt}]  (202.67,187.67) -- (203,237.67) ;
\draw  [dash pattern={on 0.84pt off 2.51pt}]  (257.33,188.33) -- (257.67,238.33) ;
\draw  [dash pattern={on 0.84pt off 2.51pt}]  (122,220.33) -- (579.67,219.67) ;
\draw  [dash pattern={on 0.84pt off 2.51pt}]  (500.67,188.33) -- (500.33,239) ;
\draw  [dash pattern={on 0.84pt off 2.51pt}]  (445,188.33) -- (445,239.67) ;
\draw  [dash pattern={on 0.84pt off 2.51pt}]  (391.33,219.67) -- (390.67,239) ;
\draw  [dash pattern={on 0.84pt off 2.51pt}]  (310,219.67) -- (310,229) -- (310,241) ;
\draw    (445,188.33) -- (500.67,188.33) ;
\draw  [dash pattern={on 0.84pt off 2.51pt}]  (497.67,188.33) -- (581.75,188.5) ;
\draw  [dash pattern={on 0.84pt off 2.51pt}]  (257.33,188.33) -- (445,188.33) ;

\draw (61.67,51.07) node [anchor=north west][inner sep=0.75pt]    {$\mathcal{F}_{\varepsilon ,\lambda }( v_{\varepsilon })$};
\draw (136,253.07) node [anchor=north west][inner sep=0.75pt]    {$a_{\varepsilon }$};
\draw (554,251.73) node [anchor=north west][inner sep=0.75pt]    {$b_{\varepsilon }$};
\draw (34.67,130.4) node [anchor=north west][inner sep=0.75pt]    {$\mathcal{F}_{\varepsilon ,\lambda }( v_{\varepsilon }) -\frac{\eta }{2}$};
\draw (192.67,247.07) node [anchor=north west][inner sep=0.75pt]    {$v_{\varepsilon }^{t_{0}}$};
\draw (35,175.07) node [anchor=north west][inner sep=0.75pt]    {$\mathcal{F}_{\varepsilon ,\lambda }( v_{\varepsilon }) -\eta $};
\draw (343.67,254.13) node [anchor=north west][inner sep=0.75pt]    {$g_{\varepsilon }$};
\draw (487.33,248.4) node [anchor=north west][inner sep=0.75pt]    {$v_{\varepsilon }^{-t_{0}}$};
\draw (244.33,246.07) node [anchor=north west][inner sep=0.75pt]    {$v_{\varepsilon }^{t_{0} ,0}$};
\draw (432.67,247.73) node [anchor=north west][inner sep=0.75pt]    {$v_{\varepsilon }^{-t_{0} ,0}$};
\draw (35.4,91.4) node [anchor=north west][inner sep=0.75pt]    {$\mathcal{F}_{\varepsilon ,\lambda }( v_{\varepsilon }) -\tau $};
\draw (9,211.07) node [anchor=north west][inner sep=0.75pt]    {$\mathcal{F}_{\varepsilon ,\lambda }( v_{\varepsilon }) -\eta -\tau $};
\draw (301.33,245.9) node [anchor=north west][inner sep=0.75pt]    {$g_{\varepsilon }^{t_{0} ,2}$};
\draw (379.67,246.73) node [anchor=north west][inner sep=0.75pt]    {$g_{\varepsilon }^{-t_{0} ,2}$};

\end{tikzpicture}
\caption{The solid curve depicts approximate (i.e.~up to the addition of a term that converges to zero as $\varepsilon \rightarrow 0$) upper energy bounds along the path taken from $a_\varepsilon$ to $b_\varepsilon$ constructed for the proof of Theorem \ref{thm: local minimisation in positive ricci} in Subsection \ref{subsec: proof of theorem 3}. The horizontal axis identifies some specific functions in the path and the vertical axis depicts the approximate upper bound on the energy of the path between each identified function. The thick dashed lines depict an approximate upper energy bound along the path of functions in Lemma \ref{lem: sliding path upper energy bounds}.}
\label{fig: path}
\end{figure}
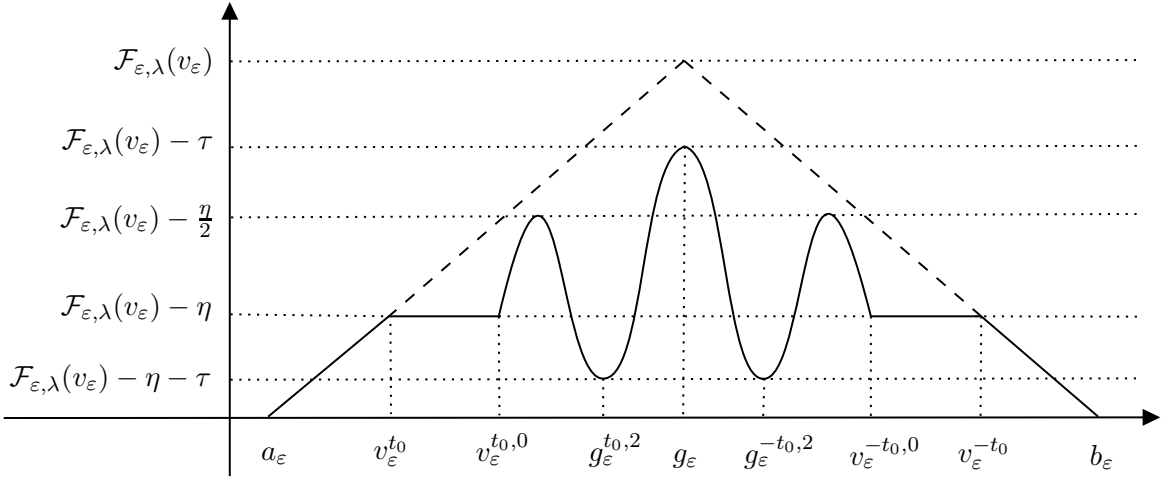

We now exhibit (for all $\varepsilon > 0$ sufficiently small) a continuous path in $W^{1,2}(N)$ between $a_\varepsilon$ and $b_\varepsilon$ which, under the assumption that $E$ is not $\mathcal{F}_\lambda$-minimising a small ball around an isolated singularity, contradicts the min-max characterisation of $E$ and proves Theorem \ref{thm: local minimisation in positive ricci}. We emphasise that this path is constructed with energy bounded above by a value strictly below $\mathcal{F}_{\varepsilon,\lambda}(v_\varepsilon)$ (independently of $\varepsilon$); approximate upper energy bounds are depicted by the solid curve in Figure \ref{fig: path} above. We thus conclude, by the arguments in Step 2, that the Allen--Cahn min-max procedure in positive Ricci curvature produces a hypersurface which is locally $\mathcal{F}_\lambda$-minimising.

\bigskip

The approximate upper energy bounds along the portions of the path we construct are computed explicitly in Section \ref{sec: paths}. However, in this section, we now sketch the various path constructions and motivate the upper energy bounds as a diffuse reflection of the underlying geometry.

\bigskip

First, for a given isolated singularity, $p \in \mathrm{Sing}(M)$, we are able to continuously deform the sets $E(t_0)$ (for a fixed $t_0 > 0$ sufficiently small) locally around $p$ so that the resulting deformation agrees with $E$ inside of a fixed ball $B_{r_0}(p)$ (for some $r_0 > 0$ determined only by the area of $M$). This is done by exploiting (\ref{eqn: geometry term}) in such a way that the $\mathcal{F}_\lambda$-energy of the deformations remain a fixed amount below $\mathcal{F}_\lambda(E)$. At the diffuse level this is replicated by placing $\overline{\mathbb{H}}^\varepsilon$ in the normal direction to the deformations as in the construction of $v_\varepsilon$, 
producing a $W^{1,2}(N)$ continuous path of functions with controlled energy; depicted in Figure \ref{fig: shifting functions I} below.
\bigskip
\begin{figure}[H]
\centering
\captionsetup{justification=justified,margin=1cm}
\begin{tikzpicture}[x=0.75pt,y=0.75pt,yscale=-1,xscale=1]

\draw  [dash pattern={on 0.84pt off 2.51pt}]  (210,205.64) -- (480.4,205.88) ;
\draw    (209.5,134.64) -- (240,135) ;
\draw    (449.6,134.6) -- (480.1,134.5) ;
\draw    (299.95,205.76) -- (390.45,205.76) ;
\draw    (299.95,182.76) -- (390.45,182.76) ;
\draw    (299.95,157.76) -- (390.45,157.76) ;
\draw  [dash pattern={on 4.5pt off 4.5pt}]  (240,135) -- (449.6,134.6) ;
\draw    (240,135) .. controls (283.75,135.25) and (259.25,205.75) .. (299.95,205.76) ;
\draw    (240,135) .. controls (283.75,135.25) and (259.25,182.75) .. (299.95,182.76) ;
\draw    (240,135) .. controls (283.75,135.25) and (259.25,157.75) .. (299.95,157.76) ;
\draw    (390.45,205.76) .. controls (430.75,205.75) and (403.75,135.25) .. (449.6,134.6) ;
\draw    (390.45,157.76) .. controls (430.75,157.75) and (403.75,135.25) .. (449.6,134.6) ;
\draw    (390.45,182.76) .. controls (430.75,182.75) and (403.75,135.25) .. (449.6,134.6) ;
\draw  [draw opacity=0][dash pattern={on 4.5pt off 4.5pt}] (316.16,205.45) .. controls (316.16,189.63) and (328.98,176.81) .. (344.8,176.81) .. controls (360.62,176.81) and (373.44,189.63) .. (373.44,205.45) .. controls (373.44,221.27) and (360.62,234.1) .. (344.8,234.1) .. controls (328.98,234.1) and (316.16,221.27) .. (316.16,205.45) -- cycle ;
\draw    (344.8,135) -- (345.18,202.76) ;
\draw [shift={(345.2,205.76)}, rotate = 269.67] [fill={rgb, 255:red, 0; green, 0; blue, 0 }  ][line width=0.08]  [draw opacity=0] (8.93,-4.29) -- (0,0) -- (8.93,4.29) -- cycle    ;
\end{tikzpicture}
\hspace{1mm}
\begin{tikzpicture}[x=0.75pt,y=0.75pt,yscale=-1,xscale=1]

\draw  [dash pattern={on 0.84pt off 2.51pt}]  (230,225.64) -- (319.95,225.76) ;
\draw    (229.5,154.64) -- (260,155) ;
\draw    (469.6,154.6) -- (500.1,154.5) ;
\draw    (319.55,225.45) -- (336.16,225.45) ;
\draw  [dash pattern={on 4.5pt off 4.5pt}]  (260,155) -- (469.6,154.6) ;
\draw    (260,155) .. controls (303.75,155.25) and (279.25,225.75) .. (319.95,225.76) ;
\draw    (410.45,225.76) .. controls (450.75,225.75) and (423.75,155.25) .. (469.6,154.6) ;
\draw  [dash pattern={on 4.5pt off 4.5pt}] (336.16,225.45) .. controls (336.16,209.63) and (348.98,196.81) .. (364.8,196.81) .. controls (380.62,196.81) and (393.44,209.63) .. (393.44,225.45) .. controls (393.44,241.27) and (380.62,254.1) .. (364.8,254.1) .. controls (348.98,254.1) and (336.16,241.27) .. (336.16,225.45) -- cycle ;
\draw  [dash pattern={on 0.84pt off 2.51pt}]  (410.45,225.76) -- (500.4,225.88) ;
\draw    (392.84,225.76) -- (410.45,225.76) ;

\draw (354.5,218.15) node [anchor=north west][inner sep=0.75pt]  [font=\huge]  {$v_{\varepsilon }$};
\end{tikzpicture}
\caption{In both graphics above the lower thin dashed horizontal line depicts $\overline{M}$, the zero level set of the function $v_\varepsilon$, and the upper thick dashed horizontal line depicts the zero level set, $\Gamma(t_0)$, of $v^{t_0}_\varepsilon = v^{t_0,1}_\varepsilon$ which is deformed in the construction of the shifted functions. In the left-hand graphic the solid lines depict various zero level sets of the $v^{t_0,s}_\varepsilon$ as we vary $s$ from $0$ to $1$. In the right-hand graphic the solid line depicts the zero level set of $v^{t_0,0}_\varepsilon$ and the thick dashed circle depicts the boundary of the ball, $B_{r_0}(p)$, in which $v^{t_0,0}_\varepsilon = v_\varepsilon$.}
\label{fig: shifting functions I}
\end{figure}
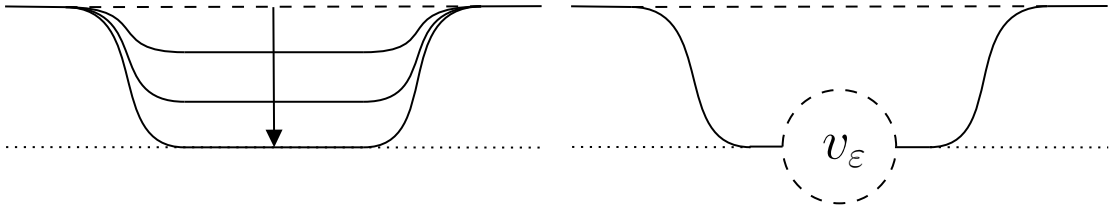
Specifically, we produce a continuous path of \textit{shifted functions}, $s \in [0,1] \rightarrow v^{t_0,s}_\varepsilon \in W^{1,2}(N)$, which satisfy $v^{t_0,1}_\varepsilon = v^{t_0}_\varepsilon$ on $N$ and $v^{t_0,0}_\varepsilon = v_\varepsilon$ inside $B_{r_0}(p)$. Furthermore, there exists an $\eta > 0$ such that for each $s \in [0,1]$ we have the following upper energy bound
    \begin{equation*}
        \mathcal{F}_{\varepsilon,\lambda}(v^{t_0,s}_\varepsilon) \leq \mathcal{F}_{\varepsilon,\lambda}(v_\varepsilon) - \eta,
    \end{equation*}
as depicted in Figure \ref{fig: path}. In this manner we have exhibited a continuous path in $W^{1,2}(N)$ from $v^{t_0}_\varepsilon$ to a function $v^{t_0,0}_\varepsilon$, equal to $v_\varepsilon$ in a fixed ball $B_{r_0}(p)$, with the energy along this path a fixed amount below the min-max value.

\bigskip

Next we construct a continuous path of functions, from $v^{t_0,0}_\varepsilon$ to the local energy minimiser $g_\varepsilon \in \mathcal{A}_{\varepsilon, \frac{R}{2}}(p)$ (recall Step 2) for a fixed $R \in (0, r_0)$. This is done in such a way that we only alter the functions inside $B_R(p)$; thus, in the following description we will only consider functions in the ball $B_R(p)$. The radius $R > 0$ here is chosen sufficiently small based on the energy drop, $\eta > 0$, above achieved outside of $B_{r_0}(p)$, ensuring that the energy along the constructed path will remain a fixed amount below the min-max value.

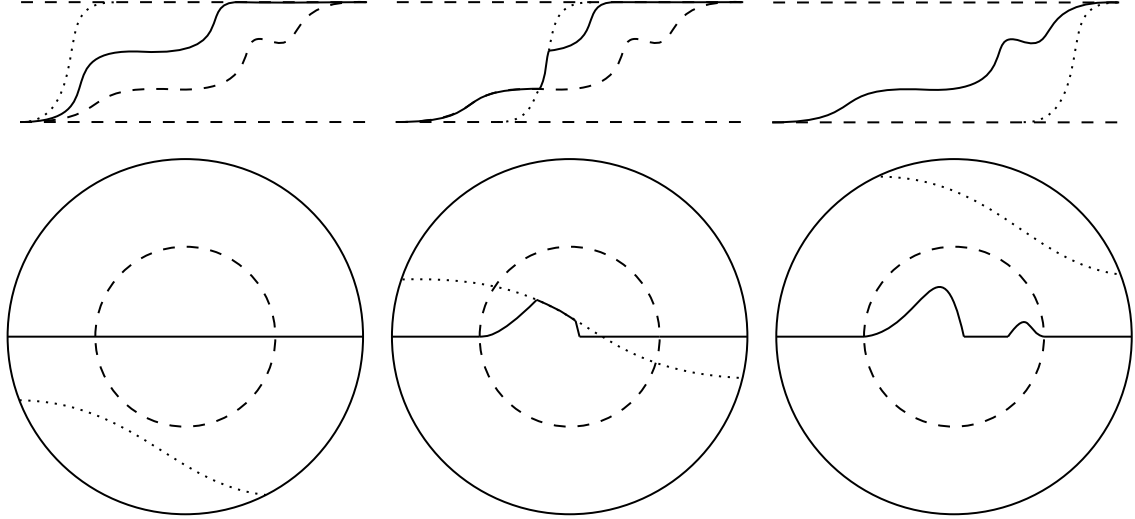
\begin{figure}[H]
    \centering
\captionsetup{justification=justified,margin=1cm}
\begin{tikzpicture}[x=0.75pt,y=0.75pt,yscale=-1,xscale=1]

\draw  [dash pattern={on 4.5pt off 4.5pt}]  (202,151) -- (377,151) ;
\draw  [dash pattern={on 4.5pt off 4.5pt}]  (202.5,90.5) -- (377,90.5) ;
\draw  [dash pattern={on 4.5pt off 4.5pt}]  (202,151) .. controls (252,151) and (228,132) .. (279,134.5) .. controls (330,137) and (307,103) .. (327.5,110) .. controls (348,117) and (332.5,88.5) .. (377,90.5) ;
\draw    (202,151) .. controls (252,151) and (212.5,113) .. (263.5,115.5) .. controls (314.5,118) and (290,90) .. (312.5,90.5) .. controls (335,91) and (359,90.5) .. (377,90.5) ;
\draw  [dash pattern={on 0.84pt off 2.51pt}]  (202,151) .. controls (220.2,151.4) and (224.68,133.82) .. (227.47,118.08) .. controls (230.27,102.35) and (230.6,90.2) .. (250.2,90.6) ;
\end{tikzpicture}
\hspace{1mm}
\begin{tikzpicture}[x=0.75pt,y=0.75pt,yscale=-1,xscale=1]

\draw  [dash pattern={on 4.5pt off 4.5pt}]  (202,151) -- (377,151) ;
\draw  [dash pattern={on 4.5pt off 4.5pt}]  (202.5,90.5) -- (377,90.5) ;
\draw  [dash pattern={on 4.5pt off 4.5pt}]  (202,151) .. controls (252,151) and (228,132) .. (279,134.5) .. controls (330,137) and (307,103) .. (327.5,110) .. controls (348,117) and (332.5,88.5) .. (377,90.5) ;
\draw    (202,151) .. controls (252,151) and (226.2,134.2) .. (274.6,134.2) ;
\draw    (279.2,115) .. controls (309.2,111.8) and (292.6,90.4) .. (312.7,90.5) .. controls (332.8,90.6) and (359.2,90.5) .. (377.2,90.5) ;
\draw    (274.6,134.2) .. controls (278.2,127.4) and (277.4,120.2) .. (279.2,115) ;
\draw  [dash pattern={on 0.84pt off 2.51pt}]  (253.2,151) .. controls (271.4,151.4) and (271.4,137) .. (274.6,134.2) ;
\draw  [dash pattern={on 0.84pt off 2.51pt}]  (279.2,115) .. controls (281.99,99.26) and (283.8,90.2) .. (301.4,90.6) ;
\end{tikzpicture}
\hspace{1mm}
\begin{tikzpicture}[x=0.75pt,y=0.75pt,yscale=-1,xscale=1]

\draw  [dash pattern={on 4.5pt off 4.5pt}]  (202,151) -- (377,151) ;
\draw  [dash pattern={on 4.5pt off 4.5pt}]  (202.5,90.5) -- (377,90.5) ;
\draw    (202,151) .. controls (252,151) and (228,132) .. (279,134.5) .. controls (330,137) and (307,103) .. (327.5,110) .. controls (348,117) and (332.5,88.5) .. (377,90.5) ;
\draw  [dash pattern={on 0.84pt off 2.51pt}]  (328.8,150.9) .. controls (347,151.3) and (351.48,133.72) .. (354.27,117.98) .. controls (357.07,102.25) and (357.4,90.1) .. (377,90.5) ;
\end{tikzpicture}
\vskip\floatsep 
\begin{tikzpicture}[scale=-0.0175, xscale=-1]
[x=0.75pt,y=0.75pt,yscale=-1,xscale=1]

\draw   (235,155) .. controls (235,80.44) and (295.44,20) .. (370,20) .. controls (444.56,20) and (505,80.44) .. (505,155) .. controls (505,229.56) and (444.56,290) .. (370,290) .. controls (295.44,290) and (235,229.56) .. (235,155) -- cycle ;
\draw    (235,155) -- (505,155) ;
\draw  [fill={rgb, 255:red, 0; green, 0; blue, 0 }  ,fill opacity=0 ][dash pattern={on 4.5pt off 4.5pt}] (301.63,155) .. controls (301.63,117.24) and (332.24,86.63) .. (370,86.63) .. controls (407.76,86.63) and (438.38,117.24) .. (438.38,155) .. controls (438.38,192.76) and (407.76,223.38) .. (370,223.38) .. controls (332.24,223.38) and (301.63,192.76) .. (301.63,155) -- cycle ;
\draw  [dash pattern={on 0.84pt off 2.51pt}]  (244,203.33) .. controls (332.33,204.67) and (362,268) .. (431.33,275.33) ;
\end{tikzpicture}
\hspace{1mm}
\begin{tikzpicture}[scale=-0.0175, xscale=-1][x=0.75pt,y=0.75pt,yscale=-1,xscale=1]

\draw   (235,155) .. controls (235,80.44) and (295.44,20) .. (370,20) .. controls (444.56,20) and (505,80.44) .. (505,155) .. controls (505,229.56) and (444.56,290) .. (370,290) .. controls (295.44,290) and (235,229.56) .. (235,155) -- cycle ;
\draw    (235,155) -- (301.63,155) ;
\draw  [fill={rgb, 255:red, 0; green, 0; blue, 0 }  ,fill opacity=0 ][dash pattern={on 4.5pt off 4.5pt}] (301.63,155) .. controls (301.63,117.24) and (332.24,86.63) .. (370,86.63) .. controls (407.76,86.63) and (438.38,117.24) .. (438.38,155) .. controls (438.38,192.76) and (407.76,223.38) .. (370,223.38) .. controls (332.24,223.38) and (301.63,192.76) .. (301.63,155) -- cycle ;
\draw    (438.38,155) -- (505,155) ;
\draw  [dash pattern={on 0.84pt off 2.51pt}]  (243,111.5) .. controls (392,107.5) and (370,180.5) .. (502.5,186.5) ;
\draw    (377.5,155) -- (438.38,155) ;
\draw    (301.63,155) .. controls (306,154.5) and (314.5,157.5) .. (345,127) ;
\draw    (374.5,143) -- (377.5,155) ;
\draw    (345,127) .. controls (367,136.5) and (369,139.5) .. (374.5,143) ;
\end{tikzpicture}
\hspace{1mm}
\begin{tikzpicture}[scale=-0.0175, xscale=-1][x=0.75pt,y=0.75pt,yscale=-1,xscale=1]

\draw   (235,155) .. controls (235,80.44) and (295.44,20) .. (370,20) .. controls (444.56,20) and (505,80.44) .. (505,155) .. controls (505,229.56) and (444.56,290) .. (370,290) .. controls (295.44,290) and (235,229.56) .. (235,155) -- cycle ;
\draw    (235,155) -- (301.63,155) ;
\draw  [fill={rgb, 255:red, 0; green, 0; blue, 0 }  ,fill opacity=0 ][dash pattern={on 4.5pt off 4.5pt}] (301.63,155) .. controls (301.63,117.24) and (332.24,86.63) .. (370,86.63) .. controls (407.76,86.63) and (438.38,117.24) .. (438.38,155) .. controls (438.38,192.76) and (407.76,223.38) .. (370,223.38) .. controls (332.24,223.38) and (301.63,192.76) .. (301.63,155) -- cycle ;
\draw    (438.38,155) -- (505,155) ;
\draw    (377.5,155) -- (411,155) ;
\draw    (411,155) .. controls (428,131) and (425.5,153) .. (438.38,155) ;
\draw    (301.63,155) .. controls (346,151) and (359,72) .. (377.5,155) ;
\draw  [dash pattern={on 0.84pt off 2.51pt}]  (314.33,33.33) .. controls (402.67,34.67) and (429,100.67) .. (498.33,108) ;
\end{tikzpicture}
    \caption{The graphics above depict various stages of the transition, in $W^{1,2}(N)$, between $v_\varepsilon$ and $\min\{v_\varepsilon,g_\varepsilon\}$ in $B_R(p)$. The first row depicts schematics of this transition and the second row depicts the geometry of the zero level sets of the local path functions; in this way, each column of the figure contains a schematic for the local path function with a corresponding depiction of the geometry of its zero level set below it. For the first row, in all three of the images the solid black line depicts the local path function in question (equal to $v_\varepsilon$ and $\min\{v_\varepsilon,g_\varepsilon\}$ in the left-hand and right-hand graphic respectively), the thick dashed curve depicts portions of $\min\{v_\varepsilon,g_\varepsilon\}$ that are not yet included in the path, the thin dashed curve depicts the diffuse sweep-out function in question, and the thick dashed upper and lower horizontal lines depict the functions $\pm 1$ respectively. Notice that the local path function depicted in the middle graphic includes portions of $v_\varepsilon$ that lie to the right of the diffuse sweep-out function, portions of $\min\{v_\varepsilon,g_\varepsilon\}$ that lie to the left of the diffuse sweep-out function and uses the diffuse sweep-out function itself to interpolate between $v_\varepsilon$ and $\min\{v_\varepsilon,g_\varepsilon\}$. The explicit construction of the local path functions replicating the behaviour of these schematics involves taking various maxima and minima of the functions in the diffuse sweep-out, $v_\varepsilon$ and $g_\varepsilon$; the diffuse sweep-out allows for this choice of maxima and minima (corresponding geometrically to a choice of zero level set) to be made continuously. For the second row, in all three graphics the outer solid circle depicts the boundary of $B_R(p)$ and the thick dashed inner circle depicts the boundary of $B_{\frac{R}{2}}(p)$. In the left-hand graphic the solid line depicts the zero level set of $v_\varepsilon$, which is $\overline{M}$, in the right-hand graphic the solid curve depicts the zero level set of $\min \{g_\varepsilon,v_\varepsilon\}$, and in the middle graphic the solid curve depicts the zero level set of a local path function in the transition between $v_\varepsilon$ and $\min\{v_\varepsilon,g_\varepsilon\}$. Each the thin dashed curves in the three graphics depict a given hypersurface in the “planar" sweep-out of $B_R(p)$, each separating $B_R(p)$ into two open sets (one above and one below it). The zero level set of the function in the local path associated to this hypersurface is chosen to be the portions of the zero level set of $\min\{v_\varepsilon,g_\varepsilon\}$ that are beneath the hypersurface, the portions of $\overline{M}$ that are above the hypersurface, and when the the hypersurface lies between the nodal sets of $v_\varepsilon$ and $\min\{v_\varepsilon,g_\varepsilon\}$, the hypersurface itself is chosen as the zero level set. The same ideas described above are used for the construction of the portion of the path from $\min\{v_\varepsilon,g_\varepsilon\}$ to $g_\varepsilon$ in $B_R(p)$.}
    \label{fig: local path}
\end{figure}
In order to construct this local path from $v_\varepsilon$ to $g_\varepsilon$ inside of $B_R(p)$ we utilise a sweep-out of the ball by images of Euclidean planes via a geodesic normal coordinate chart; depicted by the thin dashed curves in the second row of Figure \ref{fig: local path} above. The hypersurfaces in this sweep-out are used to continuously transition from our hypersurface and the local $\mathcal{F}_\lambda$-minimiser at the diffuse level; precisely, the planes facilitate the construction of a path between the diffuse representatives $v_\varepsilon$ and $g_\varepsilon$.

\begin{rem}
In the setting of the Almgren--Pitts min-max homotopy sweep-outs of $N$ by cycles are considered, as opposed to continuous paths in $W^{1,2}(N)$ in the Allen--Cahn min-max. However, there does not necessarily exist a homotopy of cycles between our hypersurface and any local $\mathcal{F}_\lambda$-minimiser. We overcome this for the Allen--Cahn min-max by directly exploiting the topology of $W^{1,2}(N)$, showing that the local path may be seen as a diffuse analogue of \cite[Lemma 1.12]{C-L-S}, and illustrating why we guarantee local $\mathcal{F}_\lambda$-minimisation (see Remark \ref{rem: CLS comparison}).
\end{rem}

In the same manner as in the construction of $v_\varepsilon$, by placing $\overline{\mathbb{H}}^\varepsilon$ in the normal direction to the hypersurfaces in this “planar" sweep-out we construct a sweep-out at the diffuse level that is continuous in $W^{1,2}(N)$; the diffuse sweep-out thus acts as an approximation for the underlying “planar" sweep-out. The diffuse sweep-out of $B_R(p)$ is utilised twice, first for the construction of a path from $v_\varepsilon$ to $\min\{g_\varepsilon,v_\varepsilon\}$, and second for the construction of a path from $\min\{g_\varepsilon,v_\varepsilon\}$ to $g_\varepsilon$. By taking a combination of maxima and minima of functions in the diffuse sweep-out, $v_\varepsilon$ and $g_\varepsilon$ (which ensure the resulting functions are in $W^{1,2}(N)$), we are able to produce a local path from $v_\varepsilon$ to $g_\varepsilon$; see Figure \ref{fig: local path} above for a description of this construction.

\bigskip

We note that $\mathcal{F}_{\varepsilon,\lambda}(g_\varepsilon) \leq \mathcal{F}_{\varepsilon,\lambda}(\min\{v_\varepsilon,g_\varepsilon\}) \leq \mathcal{F}_{\varepsilon,\lambda}(v_\varepsilon)$ (by local energy minimisation of $g_\varepsilon$) and that $R \in (0,r_0)$ is chosen based on $\eta > 0$ to ensure that the total energy contribution of the diffuse sweep-out functions in the ball is at most $\frac{\eta}{2}$. As a consequence of these two facts, the energy in $B_R(p)$ of any local path function can be estimated to be at most the energy of $v_\varepsilon$ in $B_R(p)$ plus $\frac{\eta}{2}$; from there one obtains the energy estimate in the whole of $N$. Specifically, we produce a continuous path of \textit{local functions}, $s \in [-2,2] \rightarrow g^{t_0,s}_\varepsilon \in W^{1,2}(N)$, which satisfy $g^{t_0,-2}_\varepsilon = v^{t_0,0}_\varepsilon$ on $N$, $g^{t_0,2} = g_\varepsilon$ in $B_{R}(p)$ and are all equal to $v^{t_0,0}_\varepsilon$ outside of $B_R(p)$. Furthermore, for each $s \in  [-2,2]$ we have the following upper energy bound
\begin{equation*}
    \mathcal{F}_{\varepsilon,\lambda}(g^{t_0,s}_\varepsilon) \leq \mathcal{F}_{\varepsilon,\lambda}(v_\varepsilon) - \frac{\eta}{2},
\end{equation*}
as depicted in Figure \ref{fig: path}. In this manner we have exhibited a continuous path in $W^{1,2}(N)$ from $v^{t_0,0}_\varepsilon$ to a function $g^{t_0,2}_\varepsilon$, changing $v^{t_0,0}_\varepsilon$ only inside of $B_R(p)$ (from $v_\varepsilon$ to the local energy minimiser $g_\varepsilon$).

\bigskip

In order to establish that $E$ is locally $\mathcal{F}_\lambda$-minimising, we now argue by contradiction and assume that there exists a $\tau > 0$ such that
\begin{equation}\label{eqn: contradiction assumption}
\mathcal{F}_{\varepsilon,\lambda}(v_\varepsilon) \geq \mathcal{F}_{\varepsilon,\lambda}(g_\varepsilon) + \tau \text{ for all } \varepsilon > 0 \text{ sufficiently small.}
\end{equation}
Note that by the results discussed in Step 2, contradicting (\ref{eqn: contradiction assumption}) will establish that $M$ is $\mathcal{F}_\lambda$-minimising in $B_{\frac{R}{4}}(p)$ for $R > 0$ as chosen above (as (\ref{eqn: function to geometry property}) must hold). Using the contradiction assumption, in addition to keeping the energy of the local path functions a fixed amount, $\frac{\eta}{2}$, below the min-max value, as $g^{t_0,2}_\varepsilon = g_\varepsilon$ in $B_R(p)$, we thus also have that
\begin{equation*}
    \mathcal{F}_{\varepsilon,\lambda}(g^{t_0,2}_\varepsilon) \leq \mathcal{F}_{\varepsilon,\lambda}(v_\varepsilon) - \eta - \tau,
\end{equation*}
as depicted in Figure \ref{fig: path}. We now directly exploit this extra energy drop afforded by the contradiction assumption to construct the next portion of the path.

\bigskip

To this end, we deform the rest of the set $E(t_0)$ entirely onto $E$ outside of $B_{r_0}(p)$. This is done in such a way that the deformations fix the inside of $B_{r_0}(p)$, thus preserving a drop in $\mathcal{F}_\lambda$-energy under the assumption that $E$ is not locally $\mathcal{F}_\lambda$-minimising. At the diffuse level this is replicated by placing $\overline{\mathbb{H}}^\varepsilon$ in the normal direction to these deformations, keeping the functions equal to the local energy minimiser $g_\varepsilon$ (which yields an energy drop by (\ref{eqn: contradiction assumption})) inside of $B_{r_0}(p)$ and producing a $W^{1,2}(N)$ continuous path of functions with controlled energy; this is depicted in Figure \ref{fig: shifting functions II}.

\bigskip
\begin{figure}[H]
\centering
\captionsetup{justification=justified,margin=1cm}
\begin{tikzpicture}[x=0.75pt,y=0.75pt,yscale=-1,xscale=1]

\draw  [dash pattern={on 0.84pt off 2.51pt}]  (230,225.64) -- (319.95,225.76) ;
\draw    (229.5,154.64) -- (260,155) ;
\draw    (469.6,154.6) -- (500.1,154.5) ;
\draw    (319.55,225.45) -- (336.16,225.45) ;
\draw  [dash pattern={on 4.5pt off 4.5pt}]  (260,155) -- (469.6,154.6) ;
\draw    (260,155) .. controls (303.75,155.25) and (279.25,225.75) .. (319.95,225.76) ;
\draw    (410.45,225.76) .. controls (450.75,225.75) and (423.75,155.25) .. (469.6,154.6) ;
\draw  [dash pattern={on 4.5pt off 4.5pt}] (336.16,225.45) .. controls (336.16,209.63) and (348.98,196.81) .. (364.8,196.81) .. controls (380.62,196.81) and (393.44,209.63) .. (393.44,225.45) .. controls (393.44,241.27) and (380.62,254.1) .. (364.8,254.1) .. controls (348.98,254.1) and (336.16,241.27) .. (336.16,225.45) -- cycle ;
\draw  [dash pattern={on 0.84pt off 2.51pt}]  (410.45,225.76) -- (500.4,225.88) ;
\draw    (392.84,225.76) -- (410.45,225.76) ;
\draw    (260.5,185) .. controls (304.25,185.25) and (278.85,225.45) .. (319.55,225.45) ;
\draw    (261.5,209) .. controls (305.25,209.25) and (278.85,225.45) .. (319.55,225.45) ;
\draw    (230,184.64) -- (260.5,185) ;
\draw    (231,208.64) -- (261.5,209) ;
\draw    (410.45,225.76) .. controls (450.75,225.75) and (423.5,184.5) .. (470,186) ;
\draw    (410.45,225.76) .. controls (450.75,225.75) and (423.5,209) .. (470,210.5) ;
\draw    (470,186) -- (500.5,186.36) ;
\draw    (470,210.5) -- (500.5,210.86) ;
\draw    (250.8,155) -- (251.18,222.76) ;
\draw [shift={(251.2,225.76)}, rotate = 269.67] [fill={rgb, 255:red, 0; green, 0; blue, 0 }  ][line width=0.08]  [draw opacity=0] (8.93,-4.29) -- (0,0) -- (8.93,4.29) -- cycle    ;
\draw    (479.3,155) -- (479.68,222.76) ;
\draw [shift={(479.7,225.76)}, rotate = 269.67] [fill={rgb, 255:red, 0; green, 0; blue, 0 }  ][line width=0.08]  [draw opacity=0] (8.93,-4.29) -- (0,0) -- (8.93,4.29) -- cycle    ;

\draw (354.5,218.15) node [anchor=north west][inner sep=0.75pt]  [font=\huge]  {$g_{\varepsilon }$};

\end{tikzpicture}
\hspace{1mm}
\begin{tikzpicture}[x=0.75pt,y=0.75pt,yscale=-1,xscale=1]

\draw    (229.5,225) -- (336.16,225.45) ;
\draw  [dash pattern={on 4.5pt off 4.5pt}]  (230.5,155) -- (501,155) ;
\draw  [dash pattern={on 4.5pt off 4.5pt}] (336.16,225.45) .. controls (336.16,209.63) and (348.98,196.81) .. (364.8,196.81) .. controls (380.62,196.81) and (393.44,209.63) .. (393.44,225.45) .. controls (393.44,241.27) and (380.62,254.1) .. (364.8,254.1) .. controls (348.98,254.1) and (336.16,241.27) .. (336.16,225.45) -- cycle ;
\draw    (392.84,225.76) -- (499.5,226.21) ;

\draw (354.5,218.15) node [anchor=north west][inner sep=0.75pt]  [font=\huge]  {$g_{\varepsilon }$};

\end{tikzpicture}
\caption{In both graphics above the lower horizontal lines depict $\overline{M}$, the upper horizontal line depicts the zero level set, $\Gamma(t_0)$, of $v^{t_0}_\varepsilon = v^{t_0,1}_\varepsilon$ which is deformed in the construction of the shifted functions and the thick dashed circle depicts the boundary of the ball, $B_{r_0}(p)$, in which the shifted functions $g^{t,2}_\varepsilon = g_\varepsilon$. In the left-hand graphic the solid lines depict various zero level sets of the $g^{t,2}_\varepsilon$ as we vary $t$ from $t_0$ to $0$. In the right-hand graphic the solid line depicts the zero level set of $g^{0,2}_\varepsilon = g_\varepsilon$.}
\label{fig: shifting functions II}
\end{figure}
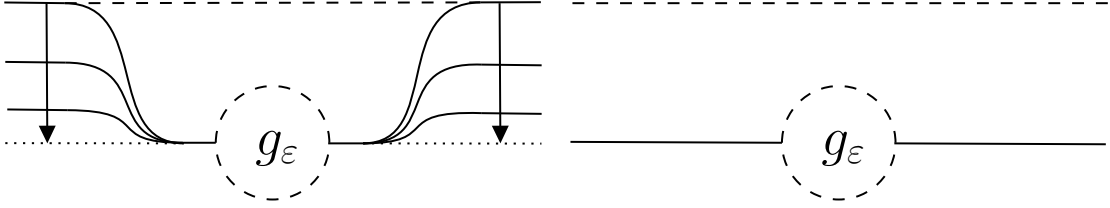
Specifically, we produce (under the assumption that (\ref{eqn: contradiction assumption}) holds) a continuous path of \textit{shifted functions}, $t \in [0,t_0] \rightarrow g^{t,2}_\varepsilon \in W^{1,2}(N)$, which are equal to the local function $g^{t_0,2}_\varepsilon$ when $t = t_0$ (justifying notation), equal to $g_\varepsilon$ when $t = 0$ and are all equal to $g_\varepsilon$ in $B_{r_0}(p)$. Furthermore, for each $t \in [0,t_0]$ have the following upper energy bound
\begin{equation*}
    \mathcal{F}_{\varepsilon,\lambda}(g^{t,2}_\varepsilon) \leq \mathcal{F}_{\varepsilon,\lambda}(v^{t,0}_\varepsilon) - \tau,
\end{equation*}
as depicted Figure \ref{fig: path}. In this manner we have exhibited a continuous path in $W^{1,2}(N)$ from $g^{t_0,2}_\varepsilon$ to the local energy minimiser $g_\varepsilon$, only changing $g^{t_0,2}_\varepsilon$ outside of $B_{r_0}(p)$.

\bigskip

To summarise all of the above, as the endpoints of each of the paths described agree with the start of the next, for all $\varepsilon > 0$ sufficiently small we have exhibited a continuous paths in $W^{1,2}(N)$ connecting $a_\varepsilon$ to the local energy minimiser $g_\varepsilon$. We also demonstrated that the energy along this path is bounded above by a value strictly below $\mathcal{F}_{\varepsilon,\lambda}(v_\varepsilon)$ (independently of $\varepsilon$), depicted in Figure \ref{fig: path}.

\bigskip

To complete the desired path from $a_\varepsilon$ to $b_\varepsilon$ it remains to construct the portion from $g_\varepsilon$ to $b_\varepsilon$. By considering $-t_0$ instead of $t_0$ in each of the paths sketched above, we ensure that through symmetric (with respect to the underlying hypersurfaces) deformations of the sets $E(-t_0)$, the relevant symmetric portions of the path may constructed with identical upper energy bounds; this portion of the path is depicted in Figure \ref{fig: path}, where the symmetry of the path with respect to $g_\varepsilon$ is made apparent.

\bigskip

To conclude the proof of Theorem \ref{thm: local minimisation in positive ricci} we concatenate the path from $a_\varepsilon$ to $g_\varepsilon$ and the path from $g_\varepsilon$ to $b_\varepsilon$, completing the desired continuous path in $W^{1,2}(N)$ between the two stable critical points $a_\varepsilon$ and $b_\varepsilon$. Under the assumption that (\ref{eqn: contradiction assumption}) holds, the upper energy bounds along this path (depicted by the solid curve in Figure \ref{fig: path}) and (\ref{eqn: 1d profile approx cmc energy}) ensure that for $\varepsilon > 0$ sufficiently small we have
\begin{equation*}
    \text{$\mathcal{F}_{\varepsilon,\lambda}$ energy along the path} \leq \mathcal{F}_\lambda(E) - \min\left\{\frac{\eta}{4},\frac{\tau}{2} \right\}.
\end{equation*}
As (\ref{eqn: min-max critical points converge to surface energy}) holds, as mentioned in Subsection \ref{subsec: Allen--Cahn minmax}, and as the above path is admissible in the Allen--Cahn min-max construction of $E$, we must contradict the assumption that (\ref{eqn: contradiction assumption}) holds. We thus conclude that any such $E$ as produced by the Allen--Cahn min-max procedure in Ricci positive curvature must be such that $E$ is locally $\mathcal{F}_\lambda$-minimising (as (\ref{eqn: function to geometry property}) holds), proving Theorem \ref{thm: local minimisation in positive ricci}. We then establish Theorems \ref{thm: generic regularity in dimension 8 in positive ricci} and \ref{thm: generic removability of isolated singularities with regular cones in positive ricci} by applying the results described in Step 1.

\subsection{Structure and remarks}

We now proceed as follows:

\begin{itemize}
    \item Section \ref{sec: cmc surgery} recalls the result of \cite{Kostas23} and uses it to perturb away isolated singularities of constant mean curvature hypersurfaces with regular tangent cones. 
    
    \item Section \ref{sec: signed distance and one-dimensional profile} analyses the signed distance function and introduces the one-dimensional profile, before showing that it approximates the underlying hypersurface in a suitable sense.

    \item Section \ref{sec: local energy properties} establishes a procedure to locally smooth Caccioppoli sets which are smooth in annular regions. This procedure is then used to relate local energy minimisation to the local geometric behaviour of the hypersurface.

    \item Section \ref{sec: paths} provides constructions of the various continuous paths in $W^{1,2}(N)$ along with calculations of upper bounds of the energy along these paths.

    \item Section \ref{sec: conclusion of the proof} ties together the results of the previous sections in order to prove the main results.
\end{itemize}

The following remarks should be kept in mind throughout:

\begin{rem}\label{rem: singular dimensions}
    As mentioned above, in \cite[Proposition B.1]{bellettini-wickramasekera-cmc-reg} it is shown that smooth constant mean curvature hypersurfaces are locally $\mathcal{F}_\lambda$-minimising. In proving Theorem \ref{thm: local minimisation in positive ricci} we aim to show that this also holds around isolated singularities for constant mean curvature hypersurfaces produced by the Allen--Cahn min-max in manifolds with positive Ricci curvature. We may thus restrict to the dimensions in which these objects may be singular and work under the assumptions that $N$ is of dimension $n + 1 \geq 8$. 
\end{rem}

\begin{rem}
    The assumption of positive Ricci curvature is used only to ensure that the upper energy bounds on the paths constructed in Section \ref{sec: paths} remain a fixed amount below the min-max value, $\mathcal{F}_\lambda(E)$. In particular, we note that the results of Sections \ref{sec: cmc surgery}, \ref{sec: signed distance and one-dimensional profile} and \ref{sec: local energy properties} as well as the paths constructed (but not their upper energy bounds) in Section \ref{sec: paths} make no use of the assumption of positive Ricci curvature.
\end{rem}

\begin{rem}
    We make the choice of a positive upper bound on $\varepsilon > 0$ finitely many times throughout the construction of the paths in the proof of Theorem \ref{thm: local minimisation in positive ricci}, ultimately constructing the paths for all $\varepsilon > 0$ smaller than a fixed positive constant. The specific choice of upper bound utilised in each instance may change, but we implicitly assume that a correct upper bound for which the desired property holds is used in each case. This remark will apply each time we choose $\varepsilon > 0$ sufficiently small.
\end{rem}

\subsection*{Acknowledgements}
CB was supported in part by the Engineering and Physical Sciences Research Council
[EP/S005641/1].

\bigskip

KMS was supported by the Engineering and Physical Sciences Research Council [EP/N509577/1], [EP/T517793/1]. He would like to thank Fritz Hiesmayr, Konstantinos Leskas and Myles Workman for many enlightening discussions. He is also grateful to Otis Chodosh for interesting email exchanges related to some aspects of this work, and to Marco Badran for comments on a preliminary version of the manuscript.

\section{Surgery procedure for isolated singularities with regular tangent cone}\label{sec: cmc surgery}

We show how the recent result of \cite{Kostas23} can be combined with a local perturbation of the metric to regularise a hypersurface of constant mean curvature around an isolated singularity with area-minimising regular tangent cone. We first collect some notation and definitions, phrased in notation in keeping with this paper, before stating the  main theorem from \cite{Kostas23} and using it to establish the surgery procedure.

\bigskip

We reset notation for this section, letting $(N^{n+1},g)$ be a Riemannian manifold with no curvature assumption. Throughout this section $T = (\partial [A])\llcorner B_1(p)$ will denote the current associated to a Caccioppoli set, $A \in \mathcal{C}(N)$, restricted to a ball $B_1(p)$ about a point $p \in N$, for which the following properties hold:
\begin{itemize}
    \item $\mathrm{Spt}(T)$ is connected.
    
    \item $\mathrm{Sing}(T) = \{p\}$ (so that $p$ is an isolated singularity of $T$).
    
    \item $T$ has a regular tangent cone at $p$.
\end{itemize}
For a given $\lambda \in \mathbb{R}$ we fix a choice of $0 < r_1 < r_2 < 1$ sufficiently small so that the following properties hold:
\begin{itemize}
    \item $\Gamma_0 = \partial(T \llcorner B_{r_1}(p))$ is a closed embedded, connected $(n-1)$-dimensional sub-manifold of $\partial B_{r_1}(p)$.

    \item $\mathrm{Spt}(T) \cap \partial B_{r_1}(p)$ is a transverse intersection.

    \item $\overline{A \cap B_{r_2}(p)}$ has connected complement in $B_{r_2}(p)$.
\end{itemize}
Let $\phi_j : \Gamma_0 \rightarrow \partial B_{r_1}(p)$ denote $C^2$ maps with
\begin{equation*}
    |\phi_j - i_{\Gamma_0}|_{C^2} \leq \frac{1}{j},
\end{equation*}
and for $\Gamma_j = (\phi_j)_* \Gamma_0$ we assume that $\Gamma_j \cap A^o \neq \emptyset$. Here we denote by $i_{\Gamma_0}$ the identity map on $\Gamma_0$, $(\phi_j)_* \Gamma_0$ the push-forward of $\Gamma_0$ by the map $\phi_j$ and $A^o$ the interior of $A$. We now recall the definitions used in \cite{Kostas23}:

\begin{definition}\label{def: one-sided minimiser}
For $\lambda \in \mathbb{R}$ we say that $T = (\partial [A])\llcorner B_1(p)$ is one-sided minimising for $\mathcal{F}_\lambda$ in $B_1(p)$ if both of the following properties hold:
\begin{itemize}
    \item $A$ is a critical point of $\mathcal{F}_\lambda$ in $B_1(p)$.

    \item We have that 
    \begin{equation*}
    \mathcal{F}_\lambda(A) \leq \mathcal{F}_\lambda(X).
\end{equation*}
for any integral $n$-current $X$ with $\mathrm{Spt}(X) \subset \overline{A \cap B_1(p)}$ and $\partial X = \partial T$.
\end{itemize}
\end{definition}

Using the above notation and definition, \cite{Kostas23} then proves the following foliation result, generalising the results of \cite{hardt-simon} to the case of constant mean curvature hypersurfaces.

\begin{thm}\label{thm: Kostas foliation}\cite{Kostas23}
Let $\lambda \in \mathbb{R}$, with $T$ as defined above such that in addition the following two properties hold:
\begin{itemize}
    \item $T$ is a critical point for $\mathcal{F}_\lambda$ in $B_1(p)$.

    \item $T$ is one-sided minimising for $\mathcal{F}_\lambda$ in $B_1(p)$.
\end{itemize}
Then, for every $j \geq 1$, there exist integral $n$-currents, $S_j$, that minimise $\mathcal{F}_\lambda$ in $\overline{A \cap B_{r_2}(p)}$, subject to the boundary condition $\partial S_j = \Gamma_j$, that satisfy the following properties:
\begin{itemize}
    \item $\mathrm{Spt}(S_j) \subset \overline{B_{r_1}(p)}$.

    \item There exist sets of finite perimeter $B_j$, with $\overline{B_j} \subset \overline{A \cap B_{r_1}(p)}$, such that $S_j = \partial [B_j] \llcorner B_{r_1}(p)$.

    \item $\Gamma_j = \mathrm{Spt}(S_j) \cap \partial B_{r_1}(p)$.

    \item Each $S_j$ is a critical point of $\mathcal{F}_\lambda$ in $B_{r_1}(p)$.

    \item For the measures associated to the supports of the $S_j$ and $T$ we have $\mu_{S_j} \rightarrow \mu_{T \llcorner B_{r_1}(p)}$. Thus, in particular on compact sets we have that $\mathrm{Spt}(S_j) \rightarrow \mathrm{Spt}(T)$ in the Hausdorff distance.

    \item The $\mathrm{Spt}(S_j)$ are smooth hypersurfaces. \qed
\end{itemize}
\end{thm}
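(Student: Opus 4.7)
The plan is to follow the Hardt--Simon foliation strategy adapted to the prescribed mean curvature setting, using $\mathcal{F}_\lambda$ as the functional to be minimised rather than just area. First I would establish the existence of minimisers $S_j$ by the direct method: among integral $n$-currents $S$ with $\mathrm{Spt}(S) \subset \overline{A \cap B_{r_2}(p)}$ and $\partial S = \Gamma_j$, the functional $\mathcal{F}_\lambda$ is bounded below (since the volume term is controlled by the diameter of $B_{r_2}(p)$) and lower semi-continuous under weak convergence of currents with mass bounds, which yields a minimiser $S_j$ by standard compactness. The one-sided minimising property of $T$ supplies $T \llcorner B_{r_1}(p)$ as a natural competitor for comparison purposes, giving uniform mass bounds.

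Next I would prove the structural properties. That $S_j = \partial [B_j] \llcorner B_{r_1}(p)$ for some Caccioppoli set $B_j \subset \overline{A \cap B_{r_1}(p)}$ follows by noting that the problem is equivalent to minimising $\mathcal{F}_\lambda$ over Caccioppoli subsets of $A$ with prescribed boundary trace on $\partial B_{r_1}(p)$ determined by $\Gamma_j$; a cut-and-paste argument, together with the connectedness assumption on the complement of $\overline{A \cap B_{r_2}(p)}$, forces $\mathrm{Spt}(S_j) \subset \overline{B_{r_1}(p)}$ (otherwise one could truncate $S_j$ near $\partial B_{r_2}(p)$ and decrease $\mathcal{F}_\lambda$, using that $\Gamma_j \subset \partial B_{r_1}(p)$). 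The identity $\Gamma_j = \mathrm{Spt}(S_j) \cap \partial B_{r_1}(p)$ comes from the transversality of $\mathrm{Spt}(T)$ with $\partial B_{r_1}(p)$ combined with the $C^2$-closeness $|\phi_j - i_{\Gamma_0}|_{C^2} \leq \tfrac{1}{j}$. Criticality of $S_j$ for $\mathcal{F}_\lambda$ in $B_{r_1}(p)$ is immediate from minimality, since interior variations in $B_{r_1}(p)$ preserve the constraint.

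The convergence $\mu_{S_j} \to \mu_{T \llcorner B_{r_1}(p)}$ I would obtain by a compactness and uniqueness argument: mass bounds yield a subsequential limit $S_\infty$ that is $\mathcal{F}_\lambda$-minimising in $\overline{A \cap B_{r_2}(p)}$ with $\partial S_\infty = \Gamma_0$, and the one-sided minimising hypothesis on $T$ identifies this limit with $T \llcorner B_{r_1}(p)$ (a monotonicity-plus-rigidity argument, or direct comparison, forces equality of supports and multiplicities, hence convergence of the full sequence). Hausdorff convergence of supports follows from weak measure convergence together with the density lower bound for stationary-type integral varifolds.

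The main obstacle, and the step I expect to dominate the argument, is proving smoothness of $\mathrm{Spt}(S_j)$. The key observation is that, by the perturbation $\Gamma_j \cap A^o \neq \emptyset$, the minimiser $S_j$ must lie strictly on one side of $T$ near $p$ (it cannot touch $\mathrm{Spt}(T)$ by a strong maximum principle for CMC hypersurfaces, even in the quasi-embedded sense). One then argues by contradiction: a putative singular point $q_j \in \mathrm{Sing}(S_j)$ would produce, after blow-up, an area-minimising tangent cone whose support is disjoint from (or lies strictly inside) the regular tangent cone of $T$ at $p$. Here one invokes the Hardt--Simon foliation of the regular tangent cone of $T$ by smooth area-minimising hypersurfaces, which together with Wickramasekera's CMC regularity theory provides barriers that the blow-up must respect; combined with classification of stable minimal cones and the constant mean curvature $\lambda$ vanishing in the blow-up limit, this forces the tangent cone to be a hyperplane, contradicting the existence of the singularity via Allard-type regularity. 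The upshot is that $S_j$ can only be singular at points where $T$ is also singular, but the one-sided position excludes this, so $\mathrm{Spt}(S_j)$ is smooth.
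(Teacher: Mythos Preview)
The paper does not prove this theorem: it is stated with the citation \cite{Kostas23} and closed immediately with \qed, meaning the result is imported wholesale from that reference and used as a black box for the surgery procedure in Proposition~\ref{prop: CMC surgery}. There is therefore no proof in the paper against which to compare your proposal.

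That said, your sketch is a reasonable outline of the Hardt--Simon strategy adapted to the $\mathcal{F}_\lambda$ functional, and is broadly consistent with what one expects the argument in \cite{Kostas23} to look like. A few points where your account is thin or potentially problematic: the step forcing $\mathrm{Spt}(S_j) \subset \overline{B_{r_1}(p)}$ is not merely a truncation argument (truncating a current at $\partial B_{r_1}(p)$ does not obviously decrease $\mathcal{F}_\lambda$ without further structure), and typically requires a barrier or monotonicity argument using the one-sided minimality of $T$; and the smoothness step, while you correctly identify it as the heart of the matter, glosses over the delicate point that the Hardt--Simon foliation of the tangent cone must be transferred to a foliation near $p$ in the curved, prescribed-mean-curvature setting, which is where most of the work in \cite{Kostas23} presumably lies. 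If you want a genuine comparison you will need to consult that reference directly.
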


Using Theorem \ref{thm: Kostas foliation} we now establish the desired surgery procedure, the proof of which is similar to \cite[Proposition 4.1]{C-L-S}.

\begin{prop}\label{prop: CMC surgery}
Let $(N^{n+1},g)$ be a Riemannian manifold and $T = (\partial [A]) \llcorner B_1(p)$ be a current associated to $A \in \mathcal{C}(N)$ with the properties as stated above and satisfying the hypotheses of Theorem \ref{thm: Kostas foliation}. Given $r \in (0,r_1)$ and any $\varepsilon > 0$ there exists a current $\tilde{T}$ and metric $\tilde{g}$ with the following properties:
\begin{itemize}
    \item $\mathrm{Sing}(\tilde{T}) = \emptyset$.

    \item $\tilde{T}$ is a critical point of $\mathcal{F}_\lambda$ in $B_1(p)$ with respect to the metric $\tilde{g}$.

    \item $\mathrm{Spt}(\tilde{T}) \setminus B_r(p) = \mathrm{Spt}(T) \setminus B_r(p)$ and $\partial( \tilde{T} \llcorner B_r(p)) = \partial (T \llcorner B_r(p))$.

    \item $d_{\mathcal{H}}(\mathrm{Spt}(T),\mathrm{Spt}(\tilde{T})) < \varepsilon$, where here $d_{\mathcal{H}}$ denotes the Hausdorff distance.
    
    \item $||\tilde{g} - g||_{C^{k,\alpha}} < \varepsilon$ for any $k \geq 1$ and $\alpha \in (0,1)$, with $g = \tilde{g}$ on $N \setminus B_r(p)$.
\end{itemize}
\end{prop}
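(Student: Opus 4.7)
The approach combines the one-sided CMC foliation of Theorem \ref{thm: Kostas foliation} with a local conformal perturbation of the metric, in the spirit of \cite[Proposition 4.1]{C-L-S}. Given $r \in (0, r_1)$ and $\varepsilon > 0$, choose auxiliary radii $0 < \rho_1 < \rho_2 < r$. By Theorem \ref{thm: Kostas foliation} we obtain smooth $g$-CMC hypersurfaces $\mathrm{Spt}(S_j) \subset \overline{B_{r_1}(p)}$ of mean curvature $\lambda$, with $\mathrm{Spt}(S_j) \to \mathrm{Spt}(T)$ in Hausdorff distance. Since $\mathrm{Spt}(T)$ is smooth on the compact annulus $\overline{B_{\rho_2}(p)} \setminus B_{\rho_1}(p)$, standard interior estimates for the CMC equation (combined with Hausdorff convergence) ensure that, for $j$ large, $\mathrm{Spt}(S_j) \cap (\overline{B_{\rho_2}(p)} \setminus B_{\rho_1}(p))$ is a normal graph over $\mathrm{Spt}(T)$ of a smooth function $u_j$ with $\|u_j\|_{C^{k+2,\alpha}} \to 0$ as $j \to \infty$. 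Fix such a $j$, taken large enough that all smallness conditions below hold.

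Next, perform a cut-and-paste. With a radial cutoff $\chi \in C^\infty_c(B_{\rho_2}(p))$ equal to $1$ on $B_{\rho_1}(p)$, define a closed embedded smooth hypersurface $\tilde M$ by taking $\tilde M = \mathrm{Spt}(S_j)$ inside $B_{\rho_1}(p)$, $\tilde M = \mathrm{Spt}(T)$ outside $B_{\rho_2}(p)$, and $\tilde M$ equal to the normal graph of $\chi u_j$ over $\mathrm{Spt}(T)$ in the annulus in between. This is globally smooth since $\chi u_j$ matches $u_j$ across $\partial B_{\rho_1}(p)$ and vanishes near $\partial B_{\rho_2}(p)$. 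Let $\tilde A$ be the Caccioppoli set bounded by $\tilde M$ that agrees with $A$ outside $B_r(p)$, and set $\tilde T = (\partial[\tilde A]) \llcorner B_1(p)$. By construction $\mathrm{Sing}(\tilde T) = \emptyset$, $\mathrm{Spt}(\tilde T) \setminus B_r(p) = \mathrm{Spt}(T) \setminus B_r(p)$, $\partial(\tilde T \llcorner B_r(p)) = \partial(T \llcorner B_r(p))$, and $d_{\mathcal H}(\mathrm{Spt}(\tilde T), \mathrm{Spt}(T)) < \varepsilon$ for $j$ sufficiently large.

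At this stage $\tilde M$ has mean curvature $\lambda$ with respect to $g$ both on $B_{\rho_1}(p)$ (where it coincides with $\mathrm{Spt}(S_j)$) and outside $B_{\rho_2}(p)$ (where it coincides with $\mathrm{Spt}(T)$), but its mean curvature $\tilde H$ in the interpolation annulus only satisfies $\|\tilde H - \lambda\|_{C^{k,\alpha}(\tilde M)} \to 0$ as $\|u_j\|_{C^{k+2,\alpha}} \to 0$. To restore exact CMC, consider a conformal metric $\tilde g = e^{2f} g$, whose effect on the mean curvature of $\tilde M$ is given by the standard formula
\begin{equation*}
    \tilde H_{\tilde g} = e^{-f}\bigl(\tilde H + n\,\tilde\nu(f)\bigr),
\end{equation*}
where $\tilde \nu$ is the unit normal to $\tilde M$. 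Prescribe $f|_{\tilde M} = 0$ and $\tilde \nu(f)|_{\tilde M} = (\lambda - \tilde H)/n$, extend $f$ off $\tilde M$ inside a thin tubular neighbourhood of $\tilde M \cap (B_{\rho_2}(p) \setminus B_{\rho_1}(p))$, and cut off so that $f \equiv 0$ outside this neighbourhood (in particular, outside $B_r(p)$). Then $\tilde H_{\tilde g} = \lambda$ on all of $\tilde M$, so $\tilde T$ is critical for $\mathcal F_\lambda$ in $B_1(p)$ with respect to $\tilde g$; the smallness of $\tilde H - \lambda$ forces $\|f\|_{C^{k+1,\alpha}}$, and hence $\|\tilde g - g\|_{C^{k,\alpha}}$, to be smaller than $\varepsilon$ for $j$ large.

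The main obstacle is the joint realisation of a smooth gluing, exact constant mean curvature after conformal change, compact support of the metric perturbation inside $B_r(p)$, and $C^{k,\alpha}$-smallness of $\tilde g - g$. These are reconciled precisely by the fact that Theorem \ref{thm: Kostas foliation} produces CMC leaves converging smoothly to $T$ on compact subsets of $B_{r_1}(p) \setminus \{p\}$, so that the mean-curvature defect $\tilde H - \lambda$ introduced by the interpolation is arbitrarily small and can be absorbed by an arbitrarily small, compactly supported conformal perturbation.
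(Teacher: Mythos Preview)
Your proposal is correct and follows the same strategy as the paper: invoke Theorem~\ref{thm: Kostas foliation}, glue a smooth CMC leaf to $T$ via a cutoff of the graphing function in an annulus, and then restore constant mean curvature by a small, compactly supported conformal change $\tilde g = e^{2f}g$. The only variation is in the specific conformal factor: the paper (handling $\lambda = 0$ separately by citing \cite[Proposition~4.1]{C-L-S}) takes $f$ constant along normal geodesics with $f|_{\tilde M} = \log(\tilde H/\lambda)$, so that $\partial_\nu f = 0$ and $e^{-f}\tilde H = \lambda$; you instead prescribe $f|_{\tilde M} = 0$ and $\tilde\nu(f)|_{\tilde M} = (\lambda - \tilde H)/n$. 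Your choice treats all $\lambda$ uniformly, while the paper's gives an explicit closed-form expression for $f$; both achieve the required $C^{k,\alpha}$-smallness of $\tilde g - g$ from the smallness of $\tilde H - \lambda$.
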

\begin{proof}
The case $\lambda = 0$ is precisely the content of \cite[Proposition 4.1]{C-L-S}. We thus consider the case $\lambda \in \mathbb{R} \setminus \{0\}$.

\bigskip

As $\mathrm{Sing}(T) = \{p\}$, for each $r \in (0,r_1)$ we have that $(\mathrm{Spt}(T) \cap B_r (p) ) \setminus \{ p \}$ is smooth. We apply Theorem \ref{thm: Kostas foliation} to see that there exists some sequence, $S_j$, of smooth constant mean curvature hypersurfaces such that the $S_j$ converge as currents to $T \llcorner B_r (p)$. Allard's Theorem, (see \cite[Chapter 5]{simongmt} and \cite[Lemma 1.14]{hardt-simon}) implies that we may write (for $j$ sufficiently large) the intersection of $\mathrm{Spt}(S_j)$ with the annulus $A(p, \frac{r}{4}, \frac{3r}{4}) = B_{\frac{3r}{4}}(p) \setminus \overline{B_{\frac{r}{4}}(p)}$ as a smooth graph over $\mathrm{Spt}(T) \cap A(p, \frac{r}{4}, \frac{3r}{4})$. 

\bigskip

Explicitly, let $u_j \in C^2(\mathrm{Spt}(T) \cap A(p, \frac{r}{4}, \frac{3r}{4}))$ denote the graphing function of $\mathrm{Spt}(S_j)$ over $\mathrm{Spt}(T) \cap A(p, \frac{r}{4}, \frac{3r}{4})$ and let $\varphi$ be a smooth cutoff function taking values in $[0,1]$ such that $\varphi = 1$ on $B_\frac{3r}{8}(p)$ and $\varphi = 0$ outside $B_\frac{5r}{8}(p)$. We then denote by $T + \varphi u_j$ the image of the normal graph of the function $\varphi u_j$ over $\mathrm{Spt}(T) \cap A(p, \frac{r}{4}, \frac{3r}{4})$.

\bigskip

We now define $\widetilde{T} = (T \setminus B_r(p)) \cup (S_j \cap B_\frac{r}{4}) \cup ((T + \varphi u_j) \cap A(p, \frac{r}{4}, \frac{3r}{4}))$ for $j$ large enough to ensure that $\mathrm{Spt}(S_j)$ is smooth and graphical as above. Note that the $\mathrm{Spt}(\widetilde{T})$ is smooth as $\mathrm{Spt}(S_j)$, $\mathrm{Spt}(T) \setminus B_r(p)$ and $(\mathrm{Spt}(T) + \varphi u_j) \cap A(p,\frac{r}{4}, \frac{3r}{4})$ are smooth; hence $\mathrm{Sing}(\widetilde{T}) = \emptyset$. By construction we ensure that $\mathrm{Spt}(\tilde{T}) \setminus B_r(p) = \mathrm{Spt}(T) \setminus B_r(p)$ and $\partial( \tilde{T} \llcorner B_r(p)) = \partial (T \llcorner B_r(p))$. Note that for $j$ sufficiently large we have that $d_{\mathcal{H}}(\mathrm{Spt}(T),\mathrm{Spt}(\tilde{T})) < \varepsilon$ by the properties in the conclusion of Theorem \ref{thm: Kostas foliation}.

\bigskip
Let $H_g(x)$ denote the mean curvature of the hypersurface $\widetilde{T}$ with respect to the metric $g$ at a point $x \in \widetilde{T}$. By construction, $H_g$ may not be equal to $\lambda$ only on $A(p, \frac{r}{4}, \frac{3r}{4})$ (as both $T$ and $S_j$ are critical points of $\mathcal{F}_\lambda$). It remains to construct a new metric $\tilde{g}$, close to $g$, and show that $\widetilde{T}$ has constant mean curvature $\lambda$ with respect to this new metric. In the construction of $\widetilde{T}$ above, as $\lambda \in \mathbb{R} \setminus \{0\}$ and the graphing functions, $u_j$, converge to $0$, we may choose $j$ large enough to ensure that $H_g$ and $\lambda$ have the same sign, so that $\frac{H_g(x)}{\lambda} > 0$ for each $x \in \mathrm{Spt}(\widetilde{T})$.

\bigskip

For some smooth function $f$ on $N$ to be determined we set $\tilde{g} = e^{2f}g$; by standard results for conformal change of metric (e.g. see \cite[Chapter 2]{sakai}) we then have that the mean curvature, $H_{\tilde{g}}$, of $\widetilde{T}$ with respect to the metric $\tilde{g}$ satisfies $H_{\tilde{g}}(x) = e^{-f}(H_g(x) + \frac{\partial f}{\partial \nu})$ at each point $x \in \widetilde{T}$, where $\nu$ is the unit normal on $\widetilde{T}$ (agreeing with $T$ outside of $A(p,\frac{r}{4},\frac{3r}{4})$).

\bigskip

We define a smooth cutoff function, $z$, such that $z(x) \equiv 1$ if $\mathrm{dist}_N(y,\mathrm{Spt}(\widetilde{T}) \cap A(p,\frac{r}{4},\frac{3r}{4})) < \frac{r}{20}$ and $z \equiv 0$ whenever $\mathrm{dist}_N(y,\widetilde{T} \cap A(p,\frac{r}{4},\frac{3r}{4})) > \frac{r}{10}$. We denote  by $\Pi(y)$ the closest point projection to $\widetilde{T}$ so that $H_g(\Pi(y))$ is a well defined function, supported in a tubular neighbourhood of $\mathrm{Spt}(\widetilde{T}) \cap A(p,\frac{r}{4},\frac{3r}{4})$. We now solve for $H_{\tilde{g}} = \lambda$ for each $x \in \mathrm{Spt}(\tilde{T})$ by setting $f(y) = \log\left(\frac{H_g(\Pi(y))}{\lambda}\right)z(y)$; note then  by construction $\mathrm{Spt}(f) \subset A(p,\frac{r}{4},\frac{3r}{4})$ and $\frac{\partial f}{\partial \nu} = 0$ on $\mathrm{Spt}(\widetilde{T}) \cap A(p,\frac{r}{4},\frac{3r}{4})$ (as $f$ is constant along normal geodesics to this region).

\bigskip

Thus we have that $H_{\tilde{g}}(\widetilde{T}) = \lambda$ for the choice of $f$ as above, so in particular $\widetilde{T}$ is a critical point of $\mathcal{F}_\lambda$, with the change in metric occurring only in $A(p, \frac{r}{4}, \frac{3r}{4})$. Furthermore, by the smoothness of $z$  and the fact that the graphing functions, $u_j$, are converging to $0$ ensures $||H_g(\Pi(y)) - \lambda||_{C^{2,\alpha}} \rightarrow 0$ as $j \rightarrow \infty$, we see that for each $k \geq 1$ and $\alpha \in (0,1)$ we have
\begin{equation*}
   ||e^{2f} - 1 ||_{C^{k,\alpha}} = \bigg|\bigg|\left(\frac{H_g(\Pi(y))}{\lambda}\right)^{2z(y)} - 1\bigg|\bigg|_{C^{k,\alpha}} \rightarrow 0 \text{ as } j \rightarrow \infty,
\end{equation*}
and so
\begin{equation*}
|| \tilde{g} - g ||_{C^{k,\alpha}} \leq ||e^{2f} - 1 ||_{C^{k,\alpha}} || g ||_{C^{k,\alpha}} \rightarrow 0 \text{ as } j \rightarrow \infty.
\end{equation*}
Hence by choosing $j$ sufficiently large we ensure the metric change is arbitrarily small.
\end{proof}

\begin{rem}\label{rem: surgery works on min-max cmc}
In the proof of Theorem \ref{thm: local minimisation in positive ricci} we will establish that the one-parameter Allen--Cahn min-max procedure of \cite{bellettini-wickramasekera} with constant prescribing function $\lambda$, in a compact Riemannian manifold of dimension $3$ or higher with Ricci positive curvature, produces a closed embedded hypersurface, $M$, of constant mean curvature $\lambda$ which is locally $\mathcal{F}_\lambda$-minimising around isolated singular points. For $M$ as above, around isolated singularities with regular tangent cones the results of \cite{simon} imply that $M$ is locally a graph over its unique tangent cone. Thus, $M$ will satisfy all the required properties of $T$ specified throughout this section in a sufficiently small ball around an isolated singular point with regular tangent cone. With this in hand, as any one-sided minimiser as in Definition \ref{def: one-sided minimiser} is required to be a Caccioppoli set (see \cite{liu}, \cite{Kostas23} for more details), we may thus apply the results of this section to $M$.
\end{rem}

\section{Signed distance and the one-dimensional profile}\label{sec: signed distance and one-dimensional profile}

\subsection{Singular behaviour of the distance function}
Recall that $M = \partial^* E \subset N$ is assumed to be a closed embedded hypersurface of constant mean curvature $\lambda$, smooth away from a closed singular set, denoted $\mathrm{Sing}(M) = \overline{M} \setminus M$, of Hausdorff dimension at most $n - 7$. We now adapt some of the analysis in \cite[Section 3]{bellettini} to the setting of hypersurfaces of constant mean curvature.

\bigskip

Denoting by $S_{d_{\overline{M}}}$ the set of points in $N \setminus \overline{M}$ where the distance function $d_{\overline{M}}$ fails to be differentiable (precisely the set of points $x \in N \setminus \overline{M}$ such that there exist two or more geodesics realising $d_{\overline{M}}(x)$), we then have that $d_{\overline{M}}$ is $C^1$ on $N \setminus (\overline{M} \cup \overline{S_{d_{\overline{M}}}})$ and that $S_{d_{\overline{M}}}$ is countably $n$-rectifiable (by \cite{Alberti1994}). We now show that $\overline{S_{d_{\overline{M}}}}$ is a countably $n$-rectifiable set; this fact will allow us to work with the smooth portions of the signed level sets of $d_{\overline{M}}$. To establish this we will need the following lemma; the proof of which is identical to that of \cite[Lemma 3.1]{bellettini} replacing the use of the sheeting theorem in \cite{wickramasekera} by the one in \cite{bellettini-wickramasekera-pmc-reg}.

\begin{lem}\label{lem: geodesic touching} (Geodesic Touching)
Let $x \in N \setminus \overline{M}$, then any minimising geodesic connecting $x$ to $\overline{M}$ (i.e.~a geodesic whose length realises $d_{\overline{M}}(x)$) with endpoint $y \in \overline{M}$ is such that $y \in M$ (i.e.~is a regular point of $M$). \qed
\end{lem}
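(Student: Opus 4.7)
The proof strategy will be by contradiction: assume the endpoint $y \in \overline{M}$ of the minimising geodesic from $x$ is singular, $y \in \mathrm{Sing}(M)$, and derive a contradiction via a blow-up argument combined with the constant mean curvature sheeting theorem of \cite{bellettini-wickramasekera-pmc-reg}.

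First, I would exploit the minimising property to produce a one-sided touching ball at $y$. For any point $z$ on the interior of the geodesic sufficiently close to $y$, the open geodesic ball $B_{d_{\overline{M}}(z)}(z)$ is disjoint from $\overline{M}$ and has $y$ on its boundary; taking $z \to y$ along the geodesic and passing to geodesic normal coordinates at $y$ shows that a fixed open half-space $H^+ \subset T_y N \cong \mathbb{R}^{n+1}$ (the tangent half-space to these touching balls) is disjoint from $\overline{M}$ up to errors of order $o(r)$ at scale $r$ coming from the metric. Hence the closed complementary half-space $H = \mathbb{R}^{n+1} \setminus H^+$ contains a small punctured neighbourhood of $y$ in $\overline{M}$ modulo such errors.

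Next, I would blow up at $y$. Rescaling by $r_k \to 0$, the varifolds associated to $\overline{M}$ converge along a subsequence to a tangent cone $C$ at $y$, an integral $n$-varifold in $T_y N$; since the mean curvature scales as $r_k \lambda \to 0$, the cone $C$ is stationary. The one-sided touching passes to the limit: $\mathrm{Spt}\|C\| \subset H$ with $0 \in \mathrm{Spt}\|C\| \cap \partial H$. The strong maximum principle of Solomon--White for stationary integral varifolds then forces $\mathrm{Spt}\|C\| \subset \partial H$, so that $C$ is an integer-multiplicity hyperplane. At this point, after sufficiently large rescaling, $\overline{M}$ is arbitrarily close as a varifold, on the unit ball at $y$, to a multiplicity-$k$ hyperplane.

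Finally, I would upgrade this to a contradiction via the sheeting theorem in \cite{bellettini-wickramasekera-pmc-reg}. The hypersurface $\overline{M}$ is stable with respect to the $\mathcal{F}_\lambda$ functional on its regular part (indeed it is locally $\mathcal{F}_\lambda$-minimising on its embedded portion), has singular set of Hausdorff dimension at most $n-7$, and, after rescaling, is close to a multiplicity-$k$ hyperplane in the varifold sense on a unit ball around $y$. These are precisely the hypotheses of the sheeting theorem, which yields that in a small ball centred at $y$, $\overline{M}$ decomposes as a finite union of $C^{1,\alpha}$ graphs of solutions to the CMC equation over the tangent hyperplane; in particular $y$ is a regular (quasi-embedded) point of $\overline{M}$, contradicting $y \in \mathrm{Sing}(M)$. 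The main obstacle will be verifying the hypotheses of the CMC sheeting theorem in this blow-up setting, in particular the stability condition with respect to $\mathcal{F}_\lambda$ on the regular part and the dimension bound on the singular set in the rescaled limit; the sharp regularity theory of \cite{bellettini-wickramasekera-pmc-reg}, replacing the use of \cite{wickramasekera} in the minimal case, is what makes this step go through.
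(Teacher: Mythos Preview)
Your approach is essentially the one the paper has in mind: it defers to \cite[Lemma 3.1]{bellettini} with the sheeting theorem of \cite{wickramasekera} replaced by that of \cite{bellettini-wickramasekera-pmc-reg}, and your touching-ball, blow-up, half-space containment, Solomon--White maximum principle, and sheeting steps are exactly that route.

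One point deserves care: your justification of the stability hypothesis for the sheeting theorem. The fact that smooth embedded CMC hypersurfaces are locally $\mathcal{F}_\lambda$-minimising (Proposition~B.1 of \cite{bellettini-wickramasekera-cmc-reg}) gives, at each \emph{regular} point, a small ball in which $M$ minimises; it does not by itself yield stability of the regular part in a fixed ball around the singular point $y$, which is what the sheeting theorem requires (a catenoid is locally area-minimising at every point yet unstable on large pieces). In the paper's setting $M$ arises from the one-parameter Allen--Cahn min-max and hence has Morse index $1$ (Subsection~\ref{subsec: Allen--Cahn minmax}); it is this finite index that guarantees stability in all sufficiently small balls and so furnishes the needed hypothesis. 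You are right to flag this as the main obstacle, but the source of stability should be the index bound inherited from the min-max construction rather than pointwise local minimisation. Also note that invoking local $\mathcal{F}_\lambda$-minimisation near singularities would be circular here, since establishing precisely that is the content of Theorem~\ref{thm: local minimisation in positive ricci}, which uses this lemma.
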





Let $F(y, t) = \exp_y(t \nu(y))$ where $y \in M$, $t \in \mathbb{R}$ and $\nu$ the choice of unit normal to $M$ pointing into $E$. We then have that $F(y,t)$ is a geodesic emanating from $M$ orthogonally; here we interpret $F(y,t)$ for $t < 0$ by $\exp_y(t(\nu^-(y)))$ where $\nu^-$ is the unit normal to $M$ pointing into $N \setminus E$. We define $\sigma^+(y), \sigma^-(y) \in \mathbb{R}$ with $\sigma^+(y) > 0$ and $\sigma^-(y) < 0$ chosen so that $F(y,t)$ is the minimising geodesic between its endpoint $y$, on $M$, and $F(y,t)$ for all $\sigma^-(y) \leq t \leq \sigma^+(y)$ but fails to be the minimising geodesic, between $y$ and $F(y,t)$, for $t > \sigma^+(y)$ or $t < \sigma^-(y)$. With this definition we define the $\textit{cut locus}$ of $M$ to be 
\begin{equation}\label{eqn: cut locus of M}
    \mathrm{Cut}(M) = \{ F(y,\sigma^\pm(y)) \: : \: y \in M, \sigma^\pm(y) < \infty \}.
\end{equation}
Standard theory (e.g. \cite{sakai}) for geodesics characterises the cut locus in the following manner: if $x = F(y,\sigma^\pm(y)) \in \mathrm{Cut}(M)$ then either their exist (at least) two distinct geodesics realising $d_{\overline{M}}(x)$ or $F : M \times (0,\infty) \rightarrow N$ is such that $dF(y,\sigma^\pm (y))$ is not invertible. We then see that $\overline{S_{d_{\overline{M}}}} \cap (N \setminus \overline{M}) = \mathrm{Cut}(M)$ (c.f. proposition 4.6. in \cite{mantegazza-mennucci}) and so in order to establish the countable $n$-rectifiability of $\overline{S_{d_{\overline{M}}}}$ it is sufficient to show that $\mathrm{Cut}(M) \setminus S_{d_{\overline{M}}}$ is countably $n$-rectifiable in $N \setminus \overline{M}$; this fact is the analogue of proposition 4.9. in \cite{mantegazza-mennucci} in our setting (where the surface $M$ may be singular). Observe that $\overline{S_{d_{\overline{M}}}} \cap \overline{M} \subset \mathrm{Sing}(M)$ and as noted above $\mathrm{Sing}(M)$ has zero $\mathcal{H}^n$ measure, so this set does not affect the rectifiability. 

\bigskip

The proof of \cite[Proposition 4.9.]{mantegazza-mennucci} may be adapted (exactly as in \cite[Section 3]{bellettini}) to our setting by virtue of the fact that their arguments are local to points away from $\overline{M}$, and hence we may apply the arguments used in their proof in our situation without change. Thus we conclude that $\overline{S_{d_{\overline{M}}}}$ is countably $n$-rectifiable, consequently its $\mathcal{H}^{n+1}$ measure is zero. 

\bigskip
The level sets of $d_{\overline{M}}$ are smooth on $N \setminus (\overline{M} \cup \overline{S_{d_{\overline{M}}}})$ by virtue of the Implicit Function Theorem, invertibility of $F$ and differentiability of $d_{\overline{M}}$ on this set. The arguments in the proof of \cite[Proposition 4.6.]{mantegazza-mennucci} show further that $F$ restricts to a diffeomorphism,
\begin{equation*}
F : \{ (y, t) \: : \: y \in M,  t \in (\sigma^-(y),\sigma^+(y)) \} \rightarrow N \setminus (\overline{M} \cup \overline{S_{d_{\overline{M}}}}).
\end{equation*}
We then extend $F$ to $M \times \{0\}$ by setting $F(y,0) = y$ so that the image of the extension of $F$ is now $N \setminus (\mathrm{Sing}(M) \cup \overline{S_{d_{\overline{M}}}})$. Finally, by defining
\begin{equation}\label{eqn: coords on M}
    V_M = \{ (y,t) \: | \: y \in M, t \in \left(\sigma^-(y), \sigma^+(y)\right)\}. 
\end{equation}
we have that $V_M$ is diffeomorphic to $N \setminus (\mathrm{Sing}(M) \cup \overline{S_{d_{\overline{M}}}})$.

\begin{rem}\label{rem: two-sided neighbourhoods of compact subsets of M}
For each compact $K \subset M$ the continuity of the functions $\sigma^\pm (x)$ (defined in the paragraph before (\ref{eqn: cut locus of M})) on $M$ implies that there exists some $c_K > 0$ such that $0 < c_K < \min_{x \in K}\{\sigma^+ (x), |\sigma^-(x)|\}$. For such a set $K$, as $M$ itself is a two-sided hypersurface, there is a two-sided tubular neighbourhood of $K$ when viewed as a subset of $V_M$ (the coordinates defined in (\ref{eqn: coords on M})), given by $K \times (-c_K,c_K)$ with its closure a subset of $V_M$. Furthermore, the image under the map $F$ of this two-sided tubular neighbourhood, $F(K \times (-c_K,c_K)) \subset N$, is such that $F(K \times (-c_K,c_K)) \cap (\mathrm{Sing}(M) \cup \overline{S_{d_{\overline{M}}}}) = \emptyset$, by the definition of $c_K$.
\end{rem}

\begin{rem}\label{rem: projection definition}
We define a projection to $M$ on $N \setminus (\mathrm{Sing}(M) \cup \overline{S_{d_{\overline{M}}}})$. For each $y \in N \setminus (\mathrm{Sing}(M) \cup \overline{S_{d_{\overline{M}}}})$ there exists a unique geodesic in $N$ with endpoint $x \in M$ realising $d_{\overline{M}}(y)$ (i.e.~such that $d_{\overline{M}}(y) = d_N(x,y)$). We denote by $\Pi$ the smooth projection from a point in $N \setminus (\mathrm{Sing}(M) \cup \overline{S_{d_{\overline{M}}}} ))$ to its unique $M$. Note that we may express this as $\Pi(x) = F \circ \Pi_{V_M} \circ F^{-1}(x)$, where $\Pi_{V_M}$ is the smooth projection map on $V_M$ sending a point $(x,s) \in V_M$ to $(x,0) \in V_M$.
\end{rem}

\subsection{Level sets of the signed distance function}\label{subsec: level sets of distance function}

We now define the \textit{signed distance}, corresponding to our choice of unit normal, $\nu$, to the hypersurface $M$ pointing into $E$ as
\begin{equation*}
    d^\pm_{\overline{M}} = 
    \begin{cases}
    + d_{\overline{M}} (x), \text{ if } x \in E\\
    0, \text{ if } x \in \overline{M}\\
    - d_{\overline{M}} (x), \text{ if } x \in N \setminus E
    \end{cases},
\end{equation*}
so that the positive sign corresponds to our point lying in $E$. Denoting by $S_{d^\pm_{\overline{M}}}$ the set of points in $N \setminus \overline{M}$ where $d^\pm_{\overline{M}}$ fails to be differentiable, we then have that $\overline{S_{d^\pm_{\overline{M}}}} = \overline{S_{d_{\overline{M}}}}$ and so $\overline{S_{d^\pm_{\overline{M}}}}$ is countably $n$-rectifiable with zero $\mathcal{H}^{n+1}$ measure.

\bigskip

We now consider the level sets

\begin{equation*}
    \Gamma(s) = \{ x \in N \: | \: d^\pm_{\overline{M}} = s\}.
\end{equation*}

These level sets, $\Gamma(s)$, of $d^\pm_{\overline{M}}$ are smooth in the open set $N \setminus (\overline{S_{d^\pm_{\overline{M}}}} \cup \overline{M})$ by the Implicit Function Theorem, as $d^\pm_{\overline{M}}$ is smooth and the exponential map is invertible on this open set. Note that we have $\Gamma(s) = \emptyset$ for $|s| > d(N)$, where $d(N)$ is the diameter of $N$.

\bigskip

We use the following notation to refer to the smooth parts of the level sets of $d^\pm_{\overline{M}}$, setting $\widetilde{\Gamma}(0) = M$, and for $s \neq 0$,
\begin{equation*}
    \widetilde{\Gamma}(s) = \Gamma(s) \setminus \overline{S_{d^\pm_{\overline{M}}}}.
\end{equation*}
As $\overline{S_{d^\pm_{\overline{M}}}}$ is countably $n$-rectifiable, and hence has vanishing $\mathcal{H}^{n+1}$ measure, we may apply the co-area formula (slicing with $d^\pm_{\overline{M}}$) to conclude that 
\begin{equation}\label{eqn: smooth parts of level sets a.e.}
 \text{for almost every } s \in \mathbb{R} \text{ we have } \mathcal{H}^n\left(\Gamma(s) \cap \overline{S_{d^\pm_{\overline{M}}}}\right) = 0.
\end{equation}
Recall the diffeomorphism $F : V_{M} \rightarrow N \setminus (\mathrm{Sing}(M) \cup Cut(M))$ defined by $F(y,t) = \exp_y(t\nu(y))$ and the coordinates $V_{M}$ on $N \setminus (\mathrm{Sing}(M) \cup Cut(M))$ as defined in (\ref{eqn: coords on M}). We equip $V_{M}$ with the pull-back metric via the map $F$, giving the usual induced metric for $M$ on $M \times \{0\} \subset V_{M}$. Note that we have $F^{-1}(\widetilde{\Gamma}(s)) = M \times \{s\} \subset V_{M}$. We now work on $V_{M}$ to establish facts about the level sets of $d^\pm_{\overline{M}}$.

\bigskip

We choose local coordinates, $(x_1, \dots, x_n,s)$, on $V_{M}$ such that $\frac{\partial}{\partial x_1}, \dots, \frac{\partial}{\partial x_n}$ are a local orthonormal frame around a point $x_0 \in M$ such that $\frac{\partial}{\partial s}$ is the unit speed of geodesics with constant base-point in $M$. The pullback metric (via $F$) then induces a volume form on $V_{M}$, and hence, at a point $(x_0,s_0) \in V_{M}$, an area element, $\theta(x,s)$, on the set $M \times \{s_0\} \subset V_{M}$. We set $\theta(x,s) = 0$ for $(x,s) \in (M \times \mathbb{R}) \setminus V_M$. We then have that by the structure of $V_M$ and $F$ that
\begin{equation}\label{eqn: integral of area element is good part of level set}
    \int_{M} \theta(x,s) dx_1\cdots dx_n = \mathcal{H}^n(\widetilde{\Gamma}(s)).
\end{equation}
Note that as the pull-back metric gives the induced metric for $M$ on $M \times \{0\}$ we have that
\begin{equation*}
    \int_{M} \theta(x,0) dx_1\cdots dx_n = \mathcal{H}^n(M).
\end{equation*}
As the volume form is smooth on $V_{M}$ we have that the induced area element, $\theta(x,s)$, is continuous in both variables, in particular we have for any $x \in M$ that
\begin{equation}\label{eqn: aream elements converge back to 0}
    \theta(x,s) \rightarrow \theta(x,0) \text{ as } s \rightarrow 0.
\end{equation}
We have from \cite[Theorem 3.11.]{tubes} that
\begin{equation}\label{eqn: differential equation for area elements log form}
    \frac{\partial}{\partial s} \log( \theta(x,s)) = -H(x,s),
\end{equation}
so that
\begin{equation}\label{eqn: differential equation for area elements}
    \partial_s \theta(x,s) = -H(x,s)\theta(x,s).
\end{equation}
Here $H(x,s)$ denotes the mean curvature of the the pullback of $\Gamma(s)$ to $V_{M}$ at the point $(x,s) \in V_{M}$. Note that $H(x,0) = \lambda$ as $M$ is a hypersurface of constant mean curvature $\lambda$.

\bigskip

We also recall the Ricatti equation, \cite[Corollary 3.6.]{tubes}, in the following form
\begin{equation}\label{eqn: Ricatti equation}
    \frac{\partial}{\partial s}H(x,s) \geq \min_N \mathrm{Ric}_g.
\end{equation}
Let us hereafter denote $m = \min_N \mathrm{Ric}_g> 0$. We then have that
\begin{equation}\label{eqn: geometry along level sets}
    \begin{cases}
    H(x,s) \geq \lambda + ms \text{ for } s > 0\\
    H(x,0) = \lambda\\
    H(x,s) \leq \lambda + ms \text{ for } s < 0
    \end{cases}.
\end{equation}
Combining (\ref{eqn: differential equation for area elements log form}), (\ref{eqn: geometry along level sets}) and applying the Fundamental Theorem of Calculus we see that for each $t \in \mathbb{R}$ we have 
\begin{equation*}
    \log (\theta(x,t)) \leq  - \int_0^t (ms + \lambda) \: ds,
\end{equation*}
from which we conclude that
\begin{equation}\label{eqn: first bound on area elements}
    \theta(x,t) \leq e^{-t\left(\frac{mt}{2} + \lambda \right)}.
\end{equation}
 Noting that the quadratic $-t\left(\frac{mt}{2} + \lambda \right)$ is maximised for $t = -\frac{\lambda}{m}$, from (\ref{eqn: first bound on area elements}) we see that
\begin{equation}\label{eqn: second bound on area elements}
    \theta(x,s) \leq e^{\frac{\lambda^2}{2m}}.
\end{equation}
We then apply the Dominated Convergence Theorem to see that by (\ref{eqn: aream elements converge back to 0}) and (\ref{eqn: second bound on area elements}) we have
\begin{equation*}
    \int_{M} \theta(x,s) dx_1 \cdots dx_n \rightarrow \int_{M} \theta(x,0) dx_1 \cdots dx_n,
\end{equation*}
so in particular by (\ref{eqn: integral of area element is good part of level set})
\begin{equation}\label{eqn: convergence of measure of level sets to M}
\mathcal{H}^n(\widetilde{\Gamma}(s)) \rightarrow \mathcal{H}^n(M) \text{ as } s \rightarrow 0.
\end{equation}
Let us record for later use that (\ref{eqn: convergence of measure of level sets to M}) above implies that
\begin{equation}\label{eqn: essential infimum of measure of level sets converges to M}
   {\essinf}_{s \in [-2\varepsilon \Lambda_\varepsilon, 2\varepsilon \Lambda_\varepsilon]}\mathcal{H}^n(\widetilde{\Gamma}(s)) \rightarrow \mathcal{H}^n(M) \text{ as } \varepsilon \rightarrow 0,
\end{equation}
and
\begin{equation}\label{eqn: essential supremum of measure of level sets converges to M}
   {\esssup}_{s \in [-2\varepsilon \Lambda_\varepsilon, 2\varepsilon \Lambda_\varepsilon]}\mathcal{H}^n(\widetilde{\Gamma}(s)) \rightarrow \mathcal{H}^n(M) \text{ as } \varepsilon \rightarrow 0.
\end{equation}

\subsection{Approximation with the one-dimensional profile}\label{subsec: one dimensional profile}

Let $\overline{\mathbb{H}}^\varepsilon$ denote the smooth increasing truncation of the one-dimensional solution to the Allen--Cahn equation as in \cite{bellettini}. Note that in particular if $s > 2\varepsilon\Lambda_\varepsilon$ then $\overline{\mathbb{H}}^\varepsilon(s) = 1$ and if $s < - 2\varepsilon\Lambda_\varepsilon$ then $\overline{\mathbb{H}}^\varepsilon(s) \equiv -1$; thus as $|\overline{\mathbb{H}}^\varepsilon(s)| = 1$ for all $|s| \geq 2\varepsilon \Lambda_\varepsilon$ where $\Lambda_\varepsilon = 3|\log (\varepsilon)|$, we have that $e_\varepsilon(\overline{\mathbb{H}}^\varepsilon(s)) = 0$ for $|s| \geq 2\varepsilon \Lambda_\varepsilon$. As shown in \cite[Section 2.2]{bellettini}, we have
\begin{equation}\label{eqn: 1d energy approx 1}
    \mathcal{E}_\varepsilon (\overline{\mathbb{H}}^\varepsilon) \rightarrow 1 \text{ as } \varepsilon \rightarrow 0,
\end{equation}
where the specific convergence is such that for fixed $\beta > 0$ we have
\begin{equation}\label{eqn: 1d solution errors in eps^2}
   1 - \beta \varepsilon^2 \leq \mathcal{E}_\varepsilon(\overline{\mathbb{H}}^\varepsilon) \leq 1 + \beta \varepsilon^2.
\end{equation}
We define the one-dimensional profile to be
\begin{equation*}
v_\varepsilon (x) =
\overline{\mathbb{H}}^\varepsilon ( d^\pm_{\overline{M}}(x)).
\end{equation*}
As $\overline{\mathbb{H}}^\varepsilon$ is smooth and the distance function is Lipschitz, it follows that $v_\varepsilon \in W^{1,2}(N)$. We now prove that (\ref{eqn: 1d profile approx cmc energy}) holds,
showing that $v_\varepsilon$ acts as an Allen--Cahn "approximation" of $M$ in the sense that it recovers the $\varepsilon \rightarrow 0$ limit of the energies of the critical points obtained by the min-max in \cite{bellettini-wickramasekera}. We will exploit this fact directly in the construction of the paths in Section \ref{sec: paths}, eventually allowing us to establish Theorem \ref{thm: local minimisation in positive ricci}.

\bigskip

By the co-area formula to $d^\pm_{\overline{M}}$, noting that $|\nabla d^\pm_{\overline{M}}| = 1$, we compute, similarly to \cite[Section 3.6]{Bellettini-Workman}, that
\begin{align*}
    \mathcal{F}_{\varepsilon,\lambda}(v_\varepsilon) &= \mathcal{E}_\varepsilon(v_\varepsilon) - \frac{\lambda}{2}\int_N v_\varepsilon\\
    &= \frac{1}{2\sigma}\int_N \frac{\varepsilon}{2}|\nabla v_\varepsilon|^2 + \frac{W(v_\varepsilon)}{\varepsilon} - \frac{\lambda}{2}\int_N v_\varepsilon\\
    &= \frac{1}{2\sigma} \int_{\mathbb{R}}\int_{\Gamma(t)} \frac{\varepsilon}{2}\left(\left(\overline{\mathbb{H}}^\varepsilon\right)'(t)\right)^2 + \frac{W(\overline{\mathbb{H}}^\varepsilon(t))}{\varepsilon} d\mathcal{H}^n dt - \frac{\lambda}{2}\int_{\mathbb{R}}\int_{\Gamma(t)} \overline{\mathbb{H}}^\varepsilon(t)d\mathcal{H}^ndt.    
\end{align*}
Using (\ref{eqn: smooth parts of level sets a.e.}) and the definition of $e_\varepsilon$ as in Subsection \ref{subsec: notation}, we then have that
\begin{equation*}
    \mathcal{F}_{\varepsilon,\lambda}(v_\varepsilon) = \frac{1}{2\sigma}\int_{\mathbb{R}} e_\varepsilon(\overline{\mathbb{H}}^\varepsilon(t))\mathcal{H}^n(\widetilde{\Gamma}(t)) dt - \frac{\lambda}{2}\int_{\mathbb{R}} \overline{\mathbb{H}}^\varepsilon(t) \mathcal{H}^n(\widetilde{\Gamma}(t)) dt. 
\end{equation*}
From the properties of $\overline{\mathbb{H}}^\varepsilon$ as stated above we may obtain the following bounds
\begin{equation*}
    \mathcal{F}_{\varepsilon,\lambda}(v_\varepsilon) \geq {\essinf}_{s \in [-2\varepsilon \Lambda_\varepsilon, 2\varepsilon \Lambda_\varepsilon]}\mathcal{H}^n(\widetilde{\Gamma}(s)) \mathcal{E}_\varepsilon(\overline{\mathbb{H}}^\varepsilon) - \frac{\lambda}{2}\int_{-2\varepsilon\Lambda_\varepsilon}^\infty \mathcal{H}^n(\widetilde{\Gamma}(t))dt + \frac{\lambda}{2}\int_{-\infty}^{-2\varepsilon\Lambda_\varepsilon} \mathcal{H}^n(\widetilde{\Gamma}(t))dt,
\end{equation*}
and also 
\begin{equation*}
    \mathcal{F}_{\varepsilon,\lambda}(v_\varepsilon) \leq 
    {\esssup}_{s \in [-2\varepsilon \Lambda_\varepsilon, 2\varepsilon \Lambda_\varepsilon]}\mathcal{H}^n(\widetilde{\Gamma}(s)) \mathcal{E}_\varepsilon(\overline{\mathbb{H}}^\varepsilon) - \frac{\lambda}{2}\int_{2\varepsilon\Lambda_\varepsilon}^\infty \mathcal{H}^n(\widetilde{\Gamma}(t))dt + \frac{\lambda}{2}\int_{-\infty}^{2\varepsilon\Lambda_\varepsilon} \mathcal{H}^n(\widetilde{\Gamma}(t))dt.
\end{equation*}
Observe that
\begin{equation*}
    \mathrm{Vol}_g(E) = \mathcal{H}^{n+1}(\{x \in N \: | \: d^\pm_{\overline{M}} > 0\}) = \int_0^\infty \mathcal{H}^n(\widetilde{\Gamma}(t))dt = \lim_{\varepsilon \rightarrow 0} \int_{\pm 2\varepsilon\Lambda_\varepsilon}^\infty \mathcal{H}^n(\widetilde{\Gamma}(t))dt,
\end{equation*}
and
\begin{equation*}
    \mathrm{Vol}_g(N \setminus E) = \mathcal{H}^{n+1}(\{x \in N \: | \: d^\pm_{\overline{M}} < 0\}) = \int_{-\infty}^0 \mathcal{H}^n(\widetilde{\Gamma}(t))dt = \lim_{\varepsilon \rightarrow 0} \int^{\pm 2\varepsilon\Lambda_\varepsilon}_{-\infty} \mathcal{H}^n(\widetilde{\Gamma}(t))dt.
\end{equation*}
Combining these two identities with (\ref{eqn: essential infimum of measure of level sets converges to M}), (\ref{eqn: essential supremum of measure of level sets converges to M}) and (\ref{eqn: 1d energy approx 1}) in the above two bounds on $\mathcal{F}_{\varepsilon,\lambda}(v_\varepsilon)$ we conclude that
\begin{equation*}
\mathcal{F}_{\varepsilon,\lambda}(v_\varepsilon) \rightarrow \mathcal{H}^n(M) - \lambda \mathrm{Vol}_g(E) + \frac{\lambda}{2}\mathrm{Vol}_g(N) = \mathcal{F}_\lambda(E) \text{ as } \varepsilon \rightarrow 0,
\end{equation*}
as desired.

\section{Relating local properties of the energy to the geometry}\label{sec: local energy properties}

This section relates local behaviour of the energy of the one-dimensional profile, $v_\varepsilon = \overline{\mathbb{H}}^\varepsilon \circ d^\pm_{\overline{M}}$, to the local geometric properties of $E$. Recall that $M = \partial^* E \subset N$ is assumed to be a closed embedded hypersurface of constant mean curvature $\lambda$, smooth away from a closed singular set, denoted $\mathrm{Sing}(M) = \overline{M} \setminus M$, of Hausdorff dimension at most $n - 7$. Furthermore, $M$ separates $N \setminus \overline{M}$ into open sets, $E$ and $N \setminus E$, with common boundary $\overline{M}$ and $E \in \mathcal{C}(N)$ with $M = \partial^* E$. For an isolated singularity $p \in \overline{M}$ we fix throughout this section some $0 < r_1 < r_2 < \min\{R_p,R_l\}$ such that $M \cap \overline{B_{r_2}(p)} \setminus B_{r_1}(p)$ is smooth.

\subsection{Local smoothing of Caccioppoli sets}\label{subsec: local smoothing of caccioppoli sets}

We first establish a procedure to locally perturb Caccioppoli sets that are assumed to be smooth in an annular region to ensure that they are smooth in the entire ball.

\begin{prop}\label{prop: local smoothing}
Suppose that $F \in \mathcal{C}(N)$ is such that $F \setminus B_{r_1}(p) = E \setminus B_{r_1}(p)$. Fix some $\tilde{r}_2 \in (r_1,r_2)$, then, for each $\delta > 0$, there exists $F_\delta \in \mathcal{C}(N)$ with the following properties:
\begin{itemize}
    \item $\partial F_\delta$ is smooth in $B_{r_2}(p)$.
    
    \item $F_\delta \setminus B_{\tilde{r}_2}(p) = E \setminus B_{\tilde{r}_2}(p)$.
    
    \item $|\mathrm{Per}_g(F_\delta) - \mathrm{Per}_g(F)| \leq \delta$.
    
    \item $Vol_g(F_\delta \Delta F) \leq \delta$.
\end{itemize}   
Thus, in particular we have that $F_\delta$ agrees with $F$ outside of $B_{\tilde{r}_2}(p)$ and is such that 
\begin{equation*}
    |\mathcal{F}_\lambda(F_\delta) - \mathcal{F}_\lambda(F)| 
\leq (1 + \lambda)\delta.
\end{equation*}
\end{prop}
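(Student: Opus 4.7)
The plan is a mollification–cutoff construction exploiting the smoothness of $\partial F$ in the annulus $B_{r_2}(p) \setminus \overline{B_{r_1}(p)}$. Since $F \setminus B_{r_1}(p) = E \setminus B_{r_1}(p)$ and $p$ is the only singular point of $\partial E$ in $B_{r_2}(p)$, $\partial F$ is a smooth embedded hypersurface there. Fix intermediate radii $r_1 < \rho_1 < \rho_2 < \tilde{r}_2$, choosing $\rho_1$ so that $\partial F$ meets $\partial B_{\rho_1}(p)$ transversally (by Sard applied to the distance function to $p$ restricted to $\partial F$). The smoothness of $\partial F$ in a tubular neighbourhood of $\overline{B_{\rho_2}(p) \setminus B_{\rho_1}(p)}$ allows the construction of a smooth defining function $f$ on an open neighbourhood of this closed annulus with $\{f > 0\} = F$ and $|\nabla f|$ uniformly bounded away from zero on $\{f = 0\}$ (for instance, the signed distance to $\partial F$ in a tubular neighbourhood, extended smoothly so that $|f|$ is bounded below off $\partial F$).

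Working in geodesic normal coordinates about $p$, convolve the indicator function with a standard mollifier at scale $\varepsilon$ to obtain the smooth function $\chi_F^\varepsilon$. Standard BV approximation gives $\chi_F^\varepsilon \to \chi_F$ in $L^1(N)$ and $\int_N |\nabla \chi_F^\varepsilon|\,dx \to \mathrm{Per}_g(F)$ as $\varepsilon \to 0$, together with the localised convergence $\int_{B_{\rho_1}(p)} |\nabla \chi_F^\varepsilon|\,dx \to \mathrm{Per}_g(F; B_{\rho_1}(p))$ thanks to the transversality choice for $\rho_1$. Pick a smooth cutoff $\eta$ with $\eta \equiv 1$ on $\overline{B_{\rho_1}(p)}$ and $\eta \equiv 0$ on $N \setminus B_{\rho_2}(p)$, and for parameters $\varepsilon > 0$ and $t \in (1/4, 3/4)$ set
\begin{equation*}
    \phi_{\varepsilon,t} = \eta\,(\chi_F^\varepsilon - t) + (1-\eta)\, f.
\end{equation*}
Define $F_\delta := \{\phi_{\varepsilon,t} > 0\}$ inside $B_{\tilde{r}_2}(p)$ and $F_\delta := F$ outside. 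Since $\phi_{\varepsilon,t} = f$ on $B_{\tilde{r}_2}(p) \setminus \overline{B_{\rho_2}(p)}$, we have $F_\delta = F$ there, yielding both the matching condition $F_\delta \setminus B_{\tilde{r}_2}(p) = E \setminus B_{\tilde{r}_2}(p)$ and smoothness of $\partial F_\delta$ across $\partial B_{\tilde{r}_2}(p)$.

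Smoothness of $\partial F_\delta$ in $B_{r_2}(p)$ reduces to showing that $0$ is a regular value of $\phi_{\varepsilon,t}$ on $B_{\tilde{r}_2}(p)$. Inside $B_{\rho_1}(p)$ this is Sard's condition at level $t$ for $\chi_F^\varepsilon$, which holds for almost every such $t$. On the interpolation annulus $B_{\rho_2}(p) \setminus \overline{B_{\rho_1}(p)}$: for $\varepsilon$ sufficiently small, $\chi_F^\varepsilon$ is identically $0$ or $1$ outside the $\varepsilon$-tubular neighbourhood of $\partial F$, so $\phi_{\varepsilon,t}$ inherits the sign of $f$ off this neighbourhood (where $|f|$ is bounded below) and cannot vanish there; on the $\varepsilon$-neighbourhood of $\partial F$, both $\nabla \chi_F^\varepsilon$ and $\nabla f$ point along the outward unit normal to $\partial F$ with consistent sign, and the cutoff error term $(\chi_F^\varepsilon - t - f)\nabla \eta$ appearing in $\nabla \phi_{\varepsilon,t}$ remains bounded while $\eta \nabla \chi_F^\varepsilon$ has magnitude of order $1/\varepsilon$, so $\nabla \phi_{\varepsilon,t} \neq 0$ on $\{\phi_{\varepsilon,t} = 0\}$ and the implicit function theorem applies.

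For the metric estimates, the $L^1$-convergence $\chi_F^\varepsilon \to \chi_F$ immediately yields $\mathrm{Vol}_g(F_\delta \Delta F) < \delta$ for $\varepsilon$ small. On the interpolation annulus the implicit function theorem analysis realises $\{\phi_{\varepsilon,t} = 0\}$ as a $C^1$-small normal graph over $\partial F$, contributing area within $\delta/4$ of $\mathcal{H}^n(\partial F \cap (B_{\rho_2}(p) \setminus \overline{B_{\rho_1}(p)}))$ for $\varepsilon$ small. On $B_{\rho_1}(p)$, combining the coarea formula $\int_0^1 \mathcal{H}^n(\{\chi_F^\varepsilon = t\} \cap B_{\rho_1}(p))\,dt = \int_{B_{\rho_1}(p)} |\nabla \chi_F^\varepsilon|\,dx$ with the localised BV convergence, Sard's theorem, and lower semicontinuity of perimeter (which forces $\liminf_\varepsilon \mathcal{H}^n(\{\chi_F^\varepsilon = t\} \cap B_{\rho_1}(p)) \geq \mathrm{Per}_g(F; B_{\rho_1}(p))$ for a.e.\ $t$), one extracts a subsequence in $\varepsilon$ and a level $t \in (1/4, 3/4)$ for which $\mathcal{H}^n(\{\chi_F^\varepsilon = t\} \cap B_{\rho_1}(p)) \to \mathrm{Per}_g(F; B_{\rho_1}(p))$. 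These bounds give $\mathrm{Per}_g(F_\delta) \leq \mathrm{Per}_g(F) + \delta$, and the matching lower bound follows from lower semicontinuity of perimeter since $F_\delta \to F$ in $L^1$. The main technical obstacle is the simultaneous enforcement of smoothness and the sharp perimeter estimate on the interpolation annulus; this is precisely handled by the implicit function theorem analysis above, while the sharpness in $B_{\rho_1}(p)$ is handled by the subsequential refinement just described.
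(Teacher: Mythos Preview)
Your overall strategy---mollify, extract a Sard level set in the inner ball, and interpolate back to $\partial F=M$ across the smooth annulus---is exactly the paper's. The difference lies in how the interpolation is carried out, and your version has a gap there.

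You interpolate at the level of defining functions, setting $\phi_{\varepsilon,t}=\eta(\chi_F^\varepsilon-t)+(1-\eta)f$, and argue that on $\{\phi_{\varepsilon,t}=0\}$ in the interpolation annulus the term $\eta\nabla\chi_F^\varepsilon$ ``has magnitude of order $1/\varepsilon$'' and therefore dominates the bounded error $(\chi_F^\varepsilon-t-f)\nabla\eta$. This fails at points of the zero set where $\eta$ is small, say $\eta=O(\varepsilon)$; such points exist, because there $\chi_F^\varepsilon$ need not be close to $t$ and the constraint $\eta(\chi_F^\varepsilon-t)=-(1-\eta)f$ is satisfied with $|f|=O(\varepsilon)$. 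At those points the term that should provide nondegeneracy is $(1-\eta)\nabla f$, whose normal component is of unit size, while the error term has size controlled only by $|\nabla\eta|$, a fixed constant depending on $\rho_2-\rho_1$. Nothing in your construction forces $|\nabla f|>|\nabla\eta|$, so the two can cancel. The same difficulty undermines the claimed $C^1$-smallness of the graph over $\partial F$: the implicit-function ratio $|\nabla_\tau\phi_{\varepsilon,t}|/|\partial_\nu\phi_{\varepsilon,t}|$ does not tend to zero as $\varepsilon\to0$ in the region where $\eta$ is small, so the annular area estimate is not justified as written. A repair is to replace $f$ by $Kf$ with $K$ chosen large relative to $|\nabla\eta|$ and $\delta$, but this must be said and checked.

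The paper sidesteps this by interpolating at the level of the \emph{graph} rather than the defining function. It first computes the normal derivative of the mollified indicator in a tubular neighbourhood of $M$ and shows directly that the level set $\{\chi_F^\varepsilon=t\}$ is a normal graph $x\mapsto x+b^t_\theta(x)\nu_S(x)$ over $M$, with $b^t_\theta\to0$ in $C^1$ as the mollification scale $\theta\to0$ (this is where the implicit function theorem is invoked, with explicit control on the ratio of tangential to normal derivatives). The glued hypersurface is then simply the graph of $\eta\,b^t_\theta$: since $|\nabla(\eta\,b^t_\theta)|\leq|\nabla\eta|\,|b^t_\theta|+\eta\,|\nabla b^t_\theta|\to0$, both smoothness of the glued set and the annular area estimate follow immediately from the area formula, without any competition between terms.
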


\begin{rem}
Though Proposition \ref{prop: local smoothing} is phrased in our setting for a constant mean curvature hypersurface, the proof makes no use of the variational assumption on $M$. Thus, the same result holds for any Caccioppoli set which satisfies the same properties as $M$, without any condition on the mean curvature.
\end{rem}

\begin{proof}
Recall that, as $r_2 < R_l$, $B_{r_2}(p)$ is $2$-bi-Lipschitz to the Euclidean ball $B_{r_2}^{\mathbb{R}^{n+1}}(0)$ via some a geodeisic normal coordinate chart, $\phi$, with $\phi(p) = 0$ and so that $\frac{1}{2} \leq \sqrt{|g|} \leq 2$ on $B_{r_2}(p)$. We consider a radially symmetric mollifier $\rho \in C^\infty_c(B^{\mathbb{R}^{n+1}}_1(0))$ such that $\int_{\mathbb{R}^{n+1}} \rho = 1$ and for each $\theta > 0$ define $\rho_\theta (x) = \frac{1}{\theta^{n+1}}\rho(\frac{x}{\theta})$. We fix $\tilde{r}_2 \in (r_1, r_2)$ and consider, for $\theta < r_2 - \tilde{r}_2$, the function $s_\theta = (\chi_{\phi(F)} * \rho_\theta) \circ \phi \in C^\infty(B_{\tilde{r}_2}(p))$. We now show that $s_\theta$ approximates $\chi_F$ in the BV norm. First, by standard properties of mollifiers we have
\begin{align*}
    ||\chi_F - s_\theta||_{L^1(B_{\tilde{r}_2}(p)))} &= \int_{B_{\tilde{r}_2}(p)} |\chi_F - (\chi_{\phi(F)} * \rho_\theta) \circ \phi| d\mathcal{H}^n_g\\
    &= \int_{\phi(B_{\tilde{r}_2}(p))} |\chi_{\phi(F)} - (\chi_{\phi(F)} * \rho_\theta)|\sqrt{|g|}d\mathcal{L}^n\\
    &\leq 2 || \chi_{\phi(F)} - (\chi_{\phi(F)} * \rho_\theta)||_{L^1(\phi(B_{\tilde{r}_2}(p)))} \rightarrow 0 \text{ as } \theta \rightarrow 0. 
\end{align*}
Hence by the lower semi-continuity of the perimeter we have that 
\begin{equation*}
    \mathrm{Per}_g(F) = |D_g \chi_F|(B_{\tilde{r}_2}(p)) \leq \liminf_{\theta \rightarrow 0} |D_g s_\theta|(B_{\tilde{r}_2}(p)).
\end{equation*}
We let $X \in \Gamma^1_c(TB_{\tilde{r}_2}(p))$ with $|X|_g \leq 1$ and compute
\begin{align*}
    \int_{B_{\tilde{r}_2}(p)} s_\theta div_g X d\mathcal{H}^n_g &= \int_{\phi(B_{\tilde{r}_2}(p))} (\chi_{\phi(F)}*\rho_\theta) \partial_i(\sqrt{|g|}\hat{X}^i)d\mathcal{L}^n\\
    &= \int_{\phi(B_{\tilde{r}_2}(p))} \chi_{\phi(F)} \partial_i(\rho_\theta *\sqrt{|g|}\hat{X}^i)d\mathcal{L}^n\\
    &= \int_{B_{\tilde{r}_2}(p)} \chi_F div_g Y \leq |D_g \chi_F|(B_{\tilde{r}_2}(p)),
\end{align*}
where here $\hat{X} = \hat{X}^i\partial_i^\phi$ with $\hat{X}^i = X^i \circ \phi^{-1}$, and $Y = Y^i\partial_i^\phi$ with $Y^i = \frac{1}{\sqrt{|g|}} (\rho_\theta * \sqrt{|g|} \hat{X}^i) \circ \phi$ (so $|Y|_g \leq 1$). As the choice of $X$ above was arbitrary here we thus conclude that 
\begin{equation*}
    \lim_{\theta \rightarrow 0} |D_g s_\theta|(B_{\tilde{r}_2}(p)) = |D_g \chi_F|(B_{\tilde{r}_2}(p)). 
\end{equation*}
Arguing identically to the proof of \cite[Theorem 13.8]{maggi_2012} we conclude that the level sets of $s_\theta$ (which have smooth boundary for almost every $t \in (0,1)$ by Sard's Theorem), $L^t_\theta = \{s_\theta > t\}$ for a.e. $t \in (0,1)$ provide a sequence of open sets with smooth boundary such that $Vol_g(L^t_\theta \Delta F) \rightarrow 0$ and $Per(L^t_\theta ; A) \rightarrow Per(F ; A)$ for each open set $A \subset B_{\tilde{r}_2}(p)$, whenever $Per(F ; \partial A) = 0,$ as $\theta \rightarrow 0$.

\bigskip

As the exponential map is a radial isometry we have that the geodeisic normal coordinate chart $\phi$ is such that $B_{r_2}^{\mathbb{R}^{n+1}}(0) = \phi(B_{r_2}(p))$ and $S = \phi\left(M \cap (\overline{B}_{r_2}(p) \setminus B_{r_1}(p))\right)$ is smooth in $B_{r_2}^{\mathbb{R}^{n+1}}(0) \setminus B_{r_1}^{\mathbb{R}^{n+1}}(0)$. In particular we may choose $r_1 < r_a < r_b < \tilde{r}_2$ so that
\begin{equation*}
    T = \{x + b\nu_S(x) \: | \: x \in S \cap (\overline{B_{r_b}^{\mathbb{R}^{n+1}}(0)} \setminus B_{r_a}^{\mathbb{R}^{n+1}}(0)), b \in [-\delta,\delta]\} \subset \subset B_{\tilde{r}_2}^{\mathbb{R}^{n+1}}(0) \setminus B_{r_1}^{\mathbb{R}^{n+1}}(0),
\end{equation*} 
for some $\delta > 0$. Here above we are denoting the unit normal induced on $S$ from $M$ by $\nu_S$; note then that 
\begin{equation}\label{eqn: E in $T$}
    F \cap \phi^{-1}(T) = \phi^{-1}(\{x + b\nu_S(x) \in T \: | \: b > 0\}).
\end{equation}
We consider normal derivatives of the function $\chi_{\widehat{F}} * \rho_\theta$ for $\theta < \delta$ in the tubular neighbourhood $T$ defined above. For $x + b\nu_S(x) \in T$ and $\theta > 0$ small enough so that $\nu_S(y) \cdot \nu_S(x) \geq \frac{1}{2}$ for any $y \in B_{2\theta}(x)$ ($S$ is smooth so $\nu_S$ varies continuously) we compute that when $b \in (-\theta,\theta)$ we have
\begin{align*}
    \nabla (\chi_{\widehat{F}} * \rho_\theta)(x + b\nu_S(x)) \cdot \nu_S(x) &= (\nabla \chi_{\widehat{F}} * \rho_\theta) (x + b\nu_s(x)) \cdot \nu_S(x) \\
    &= -((\mathcal{H}^{n}\llcorner S) \nu_S * \rho_\theta)(x + b\nu_S(x)) \cdot \nu_S(x)\\
    &= -\left(\int_{S \cap B_\theta(x + b\nu_S(x))} \rho_\theta (x + b\nu_S(x) - y)\nu_S(y)\cdot \nu_S(x)  d\mathcal{H}^n(y)\right) \\
    & \leq -\frac{1}{2}\left( \int_{S \cap B_\theta(x + b\nu_S(x))} \rho_\theta (x + b\nu_S(x) - y) d\mathcal{H}^n(y) \right) < 0.
\end{align*}
Here we've used that $D_g \chi_{\widehat{F}} = -(\mathcal{H}^n \llcorner S) \nu_S$ in $T$ as $S$ is smooth and by the triangle inequality we have both $ B_{\theta}(x + b\nu_S(x)) \subset B_{2\theta}(x)$ for $b < \theta$ and that the strict inequality holds as $S \cap B_{\theta - b}(x) \subset S \cap B_\theta (x + b\nu_S(x))$.

\bigskip

Thus for $b \in (-\theta,\theta)$ we have that $\nabla (\chi_{\widehat{F}} * \rho_\theta)(x + b\nu_s(x)) \cdot \nu_S(x) < 0$ and $\nabla (\chi_{\widehat{F}} * \rho_\theta)(x + b\nu_S(x)) = 0$ for $b \notin (-\theta,\theta)$ (as then $\chi_{\widehat{F}} * \rho_\theta \in \{0,1\}$). We conclude that for each $t \in (0,1)$ and $x \in S \cap (\overline{B_{r_b}^{\mathbb{R}^{n+1}}(0)} \setminus B_{r_a}^{\mathbb{R}^{n+1}}(0))$ there exists a unique $b_\theta^t(x) \in (-\theta, \theta)$ such that $(\chi_{\widehat{F}} * \rho_\theta )(x + b_\theta^t(x)\nu_S (x)) = t$; with $b^t_\theta \rightarrow 0$ point-wise on $S \cap (\overline{B_{r_b}^{\mathbb{R}^{n+1}}(0)} \setminus B_{r_a}^{\mathbb{R}^{n+1}}(0))$ as $\theta \rightarrow 0$. Note also that by the structure of $T$ we have
\begin{equation}\label{eqn: L^t_theta in T}
    L^t_\theta \cap \phi^{-1}(T) =  \phi^{-1}(\{x + b\nu_S(x) \in T \: | \: b < b^t_\theta(x)\}).
\end{equation}
Consider the function $h_\theta : S \cap (\overline{B_{r_b}^{\mathbb{R}^{n+1}}(0)} \setminus B_{r_a}^{\mathbb{R}^{n+1}}(0)) \times (-\theta,\theta) \rightarrow T$ defined by \begin{equation*}
    h^t_\theta(x,b) = (\chi_{\widehat{F}}*\rho_\theta)(x + b\nu_S(x)) - t.
\end{equation*} 
By the above calculation we have that $\frac{\partial h_\theta}{\partial b}(x + b\nu_S(x)) = \nabla (\chi_{\widehat{F}}*\rho_\theta)(x + b\nu_S(x)) \cdot \nu_S(x) < 0$ and so by the Implicit Function Theorem (working in charts in which $S \cap (\overline{B_{r_b}^{\mathbb{R}^{n+1}}(0)} \setminus B_{r_a}^{\mathbb{R}^{n+1}}(0))$ is locally a graph) we have that the function $b^t_\theta$ is smooth on $S \cap (\overline{B_{r_b}^{\mathbb{R}^{n+1}}(0)} \setminus B_{r_a}^{\mathbb{R}^{n+1}}(0))$ (noting that $h^t_\theta(x,b^t_\theta(x)) = 0$ for $x$ in this set). 

\bigskip

We now show that the directional derivatives of $b^t_\theta$ converge to zero as $\theta \rightarrow 0$. At a point $x \in S \cap (\overline{B_{r_b}^{\mathbb{R}^{n+1}}(0)} \setminus B_{r_a}^{\mathbb{R}^{n+1}}(0))$, denoting directional derivatives by $\frac{\partial}{\partial x_i}$, we compute
\begin{align*}
    \left| 
    \frac{\Big(\frac{\partial h^t_\theta}{\partial x_i}\Big)}{\Big(\frac{\partial h^t_\theta}{\partial b}\Big)} \right| &= \left| \frac{\nabla(\chi_{\widehat{F}} * \rho_\theta)(x+b\nu_S(x)) \cdot \frac{\partial}{\partial x_i}(x + b\nu_S(x))}{\nabla(\chi_{\widehat{F}} * \rho_\theta)(x+b\nu_S(x)) \cdot \nu_S(x))} \right|\\
    &= \left| \frac{\int_{S \cap B_\theta(x + b\nu_S(x))} \rho_\theta (x + b\nu_S(x) - y)\nu_S(y) \cdot \frac{\partial}{\partial x_i}(x + b\nu_S(x)) d\mathcal{H}^n(y)}{\int_{S \cap B_\theta(x + b\nu_S(x))} \rho_\theta (x + b\nu_S(x) - y)\nu_S(y) \cdot \nu_S(x) d\mathcal{H}^n(y)} \right|\\
    &\leq 2 \sup_{y \in S \cap B_\theta(x + b\nu_S(x))} \left|\nu_S(y) \cdot \frac{\partial}{\partial x_i}(x + b\nu_S(x))\right| \rightarrow 0 \text{ as } \theta \rightarrow 0.
\end{align*}
Here we see that the last term converges to zero as $\theta \rightarrow 0$ by using the triangle inequality, noting that $\nu_S(y) \cdot \frac{\partial}{\partial x_i} x \rightarrow 0$ by the continuity of $\nu_S$ and $\nu_S(y) \cdot b
\frac{\partial \nu_S(x)}{\partial x_i} \leq C \theta$ for a constant $C$ depending only on the normal of $S$. We then have from the Implicit Function Theorem that
\begin{equation*}
    \left| \frac{\partial b^t_\theta}{\partial x_i}  \right|(x) =  \left| 
    \frac{\Big(\frac{\partial h^t_\theta}{\partial x_i}\Big)}{\Big(\frac{\partial h^t_\theta}{\partial b}\Big)} \right|(x,b^t_\theta(x)) \rightarrow 0 \text{ as } \theta \rightarrow 0. 
\end{equation*}
We conclude that for $t \in (0,1)$ the function $b_\theta^t$ defined on $S \cap (\overline{B_{r_b}^{\mathbb{R}^{n+1}}(0)} \setminus B_{r_a}^{\mathbb{R}^{n+1}}(0))$ as above is smooth and in fact converges to zero uniformly in the $C^1$ norm as $\theta \rightarrow 0$.

\bigskip

We fix $\tilde{r}_a \in (r_a, r_b)$ so that for $\theta < \tilde{r}_a - r_a$ we ensure (by the triangle inequality) that $x + b^t_\theta(x) \nu_S(x) \in B_{\tilde{r}_a}(p)$ whenever $x \in \partial B_{r_a}(p)$ (i.e.~when $|x| = r_a$). Fix a further $\tilde{r}_b \in (\tilde{r}_a,r_b)$ so that for $\theta < r_b - \tilde{r}_b$ we ensure (by the triangle inequality) that $x + b^t_\theta(x) \nu_S(x) \in N \setminus B_{\tilde{r}_b}(p)$ whenever $x \in \partial B_{r_b}(p)$ (i.e.~when $|x| = r_b$).

\bigskip

We then define $\eta \in C^\infty_c(\mathbb{R}^{n+1})$ to be a radially symmetric cut-off function with the following properties
\begin{equation*}
\begin{cases}
0 \leq \eta \leq 1 \text{ on } \mathbb{R}^{n+1}\\|\nabla \eta | \leq C_\eta \text{ on } \mathbb{R}^{n+1}\\
\eta(x) = 1 \text{ if } |x| \leq \tilde{r}_a \\
\eta(x) = 0 \text{ if } |x| \geq \tilde{r}_b,
\end{cases}
\end{equation*}
for some $C_\eta = C_\eta(\tilde{r}_a,\tilde{r}_b) > 0$.
\bigskip

Before proceeding to define the local smoothing we let
\begin{equation*}
    H^t_\theta = \phi^{-1}\left(\{x + b\nu_S(x) \in T \: | \: b < \eta(x)b^t_\theta(x) \}\right),
\end{equation*}
and, slightly abusing notation, we denote
\begin{equation*}
    \partial H^t_\theta = \phi^{-1}\left(\left\{ x + \eta(x)b^t_\theta(x) \nu_S(x) \: | \: x \in S \cap (\overline{B_{r_b}^{\mathbb{R}^{n+1}}(0)} \setminus B_{r_a}^{\mathbb{R}^{n+1}}(0))\right\}\right),
\end{equation*}
where the notation for boundary is justified by virtue of the fact that, as $M$ separates $T$, the "graph" given by $\phi^{-1}(x + \eta(x)b^t_\theta(x)\nu_S(x))$ separates $\phi^{-1}(T)$. 

\bigskip

We then consider the following hypersurface
\begin{equation*}
    \partial G^t_\theta = (M \setminus B_{\tilde{r}_b}(p)) \cup (\partial L^t_\theta \cap B_{\tilde{r}_a}(p)) \cup \left(\partial H^t_\theta \cap (B_{\tilde{r}_b}(p) \setminus B_{\tilde{r}_a}(p))\right)
\end{equation*}
which is smooth inside $B_{r_b}(p)$ as $H^t_\theta \cap \partial B_{\tilde{r}_a}(p) \in  \partial L^t_\theta$
and $H^t_\theta \cap \partial B_{\tilde{r}_b}(p) \in M$. We note also that as $\overline{M}$ satisfies the geodesic touching property from Lemma \ref{lem: geodesic touching},  $\partial G^t_\theta$ also satisfies the conclusions of Lemma \ref{lem: geodesic touching}.

\bigskip

Furthermore, by (\ref{eqn: E in $T$}) and (\ref{eqn: L^t_theta in T}), $\partial G^t_\theta$ arises as the boundary of the open set 
\begin{equation*}
    G^t_\theta = (E \setminus ((B_{\tilde{r}_b}(p) \cup \phi^{-1}(T)))  \cup (L^t_\theta \cap B_{\tilde{r}_b}(p) \setminus \phi^{-1}(T)) \cup (H^t_\theta \cap \phi^{-1}(T)),
\end{equation*}
and hence $G^t_\theta \setminus B_{\tilde{r}_2}(p) = E \setminus B_{\tilde{r}_2}(p)$ as $\phi^{-1}(T) \subset \subset B_{\tilde{r}_2}(p) \setminus B_{r_1}(p)$.

\bigskip

We may write
\begin{equation*}
    (L^t_\theta \Delta E) \cap \phi^{-1}(T) = \phi^{-1}\left(\left\{x + b\nu_S(x) \in T \: \bigg| \: 
    \begin{cases}
    0 < b < b^t_\theta(x), \text{ if } b^t_\theta(x) > 0\\
    b^t_\theta < b < 0, \text{ if } b^t_\theta(x) < 0
    \end{cases}\right\}\right).
\end{equation*}
As $0 \leq \eta \leq 1$, $\eta(x)b^t_\theta(x)$ has the same sign as $b^t_\theta$ and $|\eta(x)b^t_\theta(x)| \leq |b^t_\theta(x)|$. Therefore, by construction of $H^t_\theta$, we have that $(G^t_\theta \Delta E) \cap \phi^{-1}(T) \subset (L^t_\theta \Delta E) \cap \phi^{-1}(T)$. As $G^t_\theta$ agrees with either $E$ or $L^t_\theta$ outside of $\phi^{-1}(T)$ we conclude that $G^t_\theta \Delta F \subset L^t_\theta \Delta F$ and hence $Vol_g(G^t_\theta \Delta F) \leq Vol_g(L^t_\theta \Delta F) \rightarrow 0$ as $\theta \rightarrow 0$. 

\bigskip

Thus we may choose $\theta > 0$ to ensure that 
\begin{equation*}
    Vol_g(G^t_\theta \Delta F) \leq \delta.
\end{equation*}
We note that we may always ensure that the various radii $r_a,\tilde{r}_a,\tilde{r}_b$ and $r_b$ as chosen above are done so to ensure that
\begin{equation*}
    \mathrm{Per}_g(F;\partial (B_{\tilde{r}_b}(p) \setminus \phi^{-1}(T))) = 0,
\end{equation*} 
so that $\mathrm{Per}_g(L^t_\theta;B_{\tilde{r}_b}(p) \setminus \phi^{-1}(T)) \rightarrow \mathrm{Per}_g(F;B_{\tilde{r}_b}(p) \setminus \phi^{-1}(T))$ as $\theta \rightarrow 0$; in particular for $\theta > 0$ sufficiently small we ensure that 
\begin{equation*}
    |\mathrm{Per}_g(L^t_\theta;B_{\tilde{r}_b}(p) \setminus \phi^{-1}(T)) - \mathrm{Per}_g(F;B_{\tilde{r}_b}(p) \setminus \phi^{-1}(T))| \leq \frac{\delta}{2}.
\end{equation*}
As we have $G^t_\theta = E$ outside of $B_{\tilde{r}_b} \cup \phi^{-1}(T)$ we have that 
\begin{equation*}
    \mathrm{Per}_g(G^t_\theta; N \setminus (B_{\tilde{r}_b} \cup \phi^{-1}(T))) = \mathrm{Per}_g(E; N \setminus (B_{\tilde{r}_b} \cup \phi^{-1}(T))).
\end{equation*} 
Note that as $|\nabla \eta| < C_\eta$ and $b^t_\theta \rightarrow 0$ uniformly in the $C^1$ norm we may choose $\theta > 0$ potentially smaller to ensure that
\begin{equation*}
    \sup_{y \in M \cap (B_{r_b}(p) \setminus B_{r_a}(p))} |J_{\phi^{-1}\circ (Id + (\eta b^t_\theta) )\circ \phi}(y) - 1| \leq \frac{\delta}{2 \mathcal{H}^n(M \cap (B_{r_b}(p) \setminus B_{r_a}(p)))},
\end{equation*}
where here $J_f$ denotes the Jacobian of a function $f$ (which depends only on the $C^1$ norm of the function $f$), and so by the area formula we have
\begin{align*}
    |\mathcal{H}^n(\partial H^t_\theta \cap \phi^{-1}(T)) - \mathcal{H}^n(M \cap \phi^{-1}(T))| &= \left|\int_{M \cap (B_{r_b}(p) \setminus B_{r_a}(p))} (J_{\phi^{-1}\circ (Id + (\eta b^t_\theta) )\circ \phi}(y) - 1) d\mathcal{H}^n(y) \right|\\
    &\leq \frac{\delta}{2}.
\end{align*}

\bigskip

We combine the above facts to compute that for $\theta > 0$ sufficiently small we have
\begin{align*}
    |\mathrm{Per}_g(G^t_\theta) - \mathrm{Per}_g(F)| &= |\mathrm{Per}_g(G^t_\theta;\phi^{-1}(T)) - \mathrm{Per}_g(F;\phi^{-1}(T)) \\
    &\quad\quad+ \mathrm{Per}_g(G^t_\theta;N \setminus \phi^{-1}(T)) - \mathrm{Per}_g(F;N \setminus \phi^{-1}(T))|\\
    &\leq |\mathcal{H}^n(\partial H^t_\theta \cap \phi^{-1}(T)) - \mathcal{H}^n(M \cap \phi^{-1}(T))|\\
    &\quad\quad+ |\mathrm{Per}_g(L^t_\theta;B_{\tilde{r}_b}(p) \setminus \phi^{-1}(T)) - \mathrm{Per}_g(F;B_{\tilde{r}_b}(p) \setminus \phi^{-1}(T))|\\
    &\leq \delta
\end{align*}
and so
\begin{equation*}
    |\mathcal{F}_\lambda(G_\theta^t) - \mathcal{F}_\lambda(F)| 
    \leq (1 + \lambda)\delta.
\end{equation*}
Setting $F_\delta = G^t_\theta$ for one choice of almost any $t \in (0,1)$ and a choice of $\theta > 0$ sufficiently small then provides a Caccioppoli satisfying the desired conclusions.
\end{proof}

\begin{rem}\label{rem: local smoothings satisfy geodesic touching lemma}
As $\partial F_\delta$ is smooth in $B_{r_2}(p)$ and agrees with $M$ outside of $B_{\tilde{r}_2}(p)$, we have that $\partial F_\delta$ satisfies the conclusions of Lemma \ref{lem: geodesic touching}. More specifically, for any $x \in N \setminus \partial F_\delta$, any minimising geodesic connecting $x$ to $\partial F_\delta$ has endpoint in a regular point of $\partial F_\delta$.
\end{rem}

\subsection{Local energy minimisation}\label{subsec: function minimisation}

We now prove the existence of local energy minimisers that agree with our one-dimensional solution, $v_\varepsilon= d^\pm_{\overline{M}} \circ \overline{\mathbb{H}}^\varepsilon$, outside of a fixed ball in $N$. Minimisers of such problems will be used in the next subsection to conclude local statements about constant mean curvature hypersurfaces under appropriate assumptions on the behaviour of the energy of $v_\varepsilon$ as $\varepsilon \rightarrow 0$.

\begin{lem}\label{lem: function minimisation}
Let $B_{\rho}(q) \subset N$ be a ball, of radius $\rho > 0$, centred at a point $q \in \overline{M}$ and define for $\varepsilon \in (0,1)$
\begin{equation*}
    \mathcal{A}_{\varepsilon,\rho}(q) = \{ u \in W^{1,2}(N) \: | \: |u| \leq 1, u = v_\varepsilon \text{ on } N \setminus B_{\rho}(q) \},
\end{equation*}
then there exists $g_\varepsilon \in \mathcal{A}_{\varepsilon,\rho}(q)$ such that \begin{equation*}
    \mathcal{F}_{\varepsilon,\lambda}(g_\varepsilon) = \inf_{u \in \mathcal{A}_{\varepsilon,\rho}(q)} \mathcal{F}_{\varepsilon,\lambda}(u).
\end{equation*}
\end{lem}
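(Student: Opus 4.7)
The plan is to apply the direct method of the calculus of variations. First, I would observe that $\mathcal{A}_{\varepsilon,\rho}(q)$ is nonempty since $v_\varepsilon \in \mathcal{A}_{\varepsilon,\rho}(q)$ (note that $v_\varepsilon \in W^{1,2}(N)$ with $|v_\varepsilon| \le 1$, by the properties of $\overline{\mathbb{H}}^\varepsilon$ recalled in Subsection \ref{subsec: one dimensional profile}). Next, since $|u| \le 1$ for every $u \in \mathcal{A}_{\varepsilon,\rho}(q)$, the term $-\frac{\lambda}{2} \int_N u$ is uniformly bounded, and $\mathcal{E}_\varepsilon(u) \ge 0$, so the infimum is finite and bounded above by $\mathcal{F}_{\varepsilon,\lambda}(v_\varepsilon) < \infty$.

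Let $\{u_k\} \subset \mathcal{A}_{\varepsilon,\rho}(q)$ be a minimising sequence. From the bound $|u_k| \le 1$ and the uniform upper bound on $\mathcal{F}_{\varepsilon,\lambda}(u_k)$, together with the non-negativity of $W$ and the formula $\mathcal{F}_{\varepsilon,\lambda}(u) = \mathcal{E}_\varepsilon(u) - \frac{\lambda}{2}\int_N u$, I would extract a uniform bound on $\frac{\varepsilon}{4\sigma}\int_N |\nabla u_k|^2$, and hence on $\|u_k\|_{W^{1,2}(N)}$ (with constants depending on $\varepsilon$, which is fine since $\varepsilon$ is fixed in this statement). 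By reflexivity of $W^{1,2}(N)$, pass to a subsequence (not relabelled) with $u_k \rightharpoonup g_\varepsilon$ weakly in $W^{1,2}(N)$; by the Rellich--Kondrachov embedding $u_k \to g_\varepsilon$ strongly in $L^2(N)$, and after passing to a further subsequence, pointwise a.e.\ on $N$.

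The membership $g_\varepsilon \in \mathcal{A}_{\varepsilon,\rho}(q)$ follows from the fact that $\mathcal{A}_{\varepsilon,\rho}(q)$ is convex and strongly closed in $W^{1,2}(N)$ (the constraint $|u| \le 1$ passes to a.e.\ pointwise limits, and the affine constraint $u - v_\varepsilon = 0$ on $N \setminus B_\rho(q)$ is preserved under $L^2$ convergence since we may test against indicator-type functions supported outside $B_\rho(q)$); hence $\mathcal{A}_{\varepsilon,\rho}(q)$ is weakly closed. It remains to show that $\mathcal{F}_{\varepsilon,\lambda}$ is weakly lower semicontinuous on $\mathcal{A}_{\varepsilon,\rho}(q)$. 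The Dirichlet term $\int_N |\nabla u|^2$ is weakly lower semicontinuous on $W^{1,2}(N)$ by convexity; the potential term $\int_N W(u)/\varepsilon$ passes to the limit (in fact continuously) by the Dominated Convergence Theorem, using pointwise a.e.\ convergence and the uniform bound $|u_k| \le 1$ together with the continuity of $W$ on $[-1,1]$; and the linear term $-\frac{\lambda}{2}\int_N u$ is weakly continuous. Combining these,
\begin{equation*}
\mathcal{F}_{\varepsilon,\lambda}(g_\varepsilon) \le \liminf_{k \to \infty} \mathcal{F}_{\varepsilon,\lambda}(u_k) = \inf_{u \in \mathcal{A}_{\varepsilon,\rho}(q)} \mathcal{F}_{\varepsilon,\lambda}(u),
\end{equation*}
and the reverse inequality is immediate since $g_\varepsilon \in \mathcal{A}_{\varepsilon,\rho}(q)$.

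This is a standard direct method argument and I do not anticipate a genuine obstacle; the only point that merits care is verifying weak closedness of the admissible class, which reduces to the convexity of the constraint $|u|\le 1$ and the linearity of the trace/boundary condition $u = v_\varepsilon$ on $N \setminus B_\rho(q)$.
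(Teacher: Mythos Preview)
Your proposal is correct and follows essentially the same direct-method argument as the paper. The only cosmetic differences are that the paper bounds $\|u_k\|_{L^2}$ via a Poincar\'e inequality on $u_k - v_\varepsilon \in W^{1,2}_0(B_\rho(q))$ (whereas your constraint $|u_k|\le 1$ gives this trivially), and the paper invokes Fatou for the potential term while you use the Dominated Convergence Theorem; both are valid here.
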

\begin{proof}
Let $I_\varepsilon = \inf_{u \in \mathcal{A}_{\varepsilon,\rho}(q)} \mathcal{F}_{\varepsilon,\lambda} (u)$ and consider $\{ u_k \}_{k = 1}^\infty \in \mathcal{A}_{\varepsilon,\rho}(q)$ such that $\mathcal{F}_{\varepsilon,\lambda}(u_k) \rightarrow I_\varepsilon$. Note that for any $u \in \mathcal{A}_{\varepsilon,\rho}(q)$ we have 
\begin{equation*}
\mathcal{F}_{\varepsilon,\lambda}(u) = \frac{1}{2\sigma} \int_N \varepsilon\frac{|\nabla u|^2}{2} + \frac{W(u)}{\varepsilon} - \frac{\lambda}{2}\int_N u \geq \frac{1}{2\sigma} \int_N \varepsilon\frac{|\nabla u|^2}{2} - \frac{\lambda}{2} \mathrm{Vol}(N),
\end{equation*}
using the fact that $|u| \leq 1$ $\mathcal{H}^{n+1}$-a.e. and $W \geq 0$; hence for $k$ sufficiently large we have that
\begin{equation*}
\int_N |\nabla u_k|^2 \leq \frac{4\sigma}{\varepsilon}
\left(I_\varepsilon + 1 + \frac{\lambda}{2}\mathrm{Vol}(N)\right),
\end{equation*}
so $\sup_k ||\nabla u_k||_{L^2(N)} < \infty$. Using the triangle and Poincar\'e inequality (as $u_k - v_\varepsilon \in W^{1,2}_0(B_{\rho}(q))$ for each $k \in \mathbb{N}$) we have
\begin{align*}
||u_k||_{L^2(N)} &= || u_k - v_\varepsilon + v_\varepsilon ||_{L^2(N)} \leq || u_k - v_\varepsilon ||_{L^2(N)} + ||v_\varepsilon||_{L^2(N)} \leq C_P|| \nabla u_k - \nabla v_\varepsilon ||_{L^2(N)} + ||v_\varepsilon||_{L^2(N)}\\
&\leq C_P (\sup_k ||\nabla u_k||_{L^2(N)} + ||\nabla v_\varepsilon ||_{L^2(N)}) + ||v_\varepsilon||_{L^2(N)},
\end{align*}
(where $C_P$ is the Poincar\'e constant for $N$) so $\sup_k ||u_k||_{L^2(N)} < \infty$. 
Thus $\{ u_k \}_{k = 1}^\infty$ is a bounded sequence in $W^{1,2}(N)$ and hence, by Rellich-Kondrachov compactness, there exists $u \in W^{1,2}(N)$ and a sub-sequence of the $\{u_k\}_{k = 1}^\infty$ (not relabelled) such that $u_k$ converges to $u$, weakly in $W^{1,2}(N)$, strongly in $L^2(N)$ (hence strongly in $L^1(N)$) and $\mathcal{H}^{n+1}\text{-}a.e. \text{ in } N$. We then have that $|u| \leq 1$ and $u = v_\varepsilon$ on $N \setminus B_{\rho}(q)$ at $\mathcal{H}^{n+1}$-$a.e.$ point, thus $u \in \mathcal{A}_{\varepsilon,\rho}(q)$. Combining the weak convergence of the $u_k$ in $W^{1,2}(N)$, almost everywhere convergence of the $u_k$ with Fatou's lemma (using the continuity of $W$ and that $W \geq 0$) and the strong $L^1(N)$ convergence of the $u_k$, we have $\mathcal{F}_{\varepsilon,\lambda}(u) = I_\varepsilon$. 

\bigskip

The above arguments show that, for sufficiently small $\varepsilon > 0$, the energy minimisation problem in the class $\mathcal{A}_{\varepsilon,\rho}(q)$ is well posed. Hence, for each  $\varepsilon \in (0,1)$ we may produce a function $g_\varepsilon \in W^{1,2}(N)$ such that $|g_\varepsilon| \leq 1$ on $N$, $g_\varepsilon = v_\varepsilon$ on
$N \setminus B_{\rho}(q)$ and $\mathcal{F}_{\varepsilon,\lambda}(g_\varepsilon) = \inf_{u \in \mathcal{A_\varepsilon}}\mathcal{F}_{\varepsilon,\lambda} (u)$; in particular $\mathcal{F}_{\varepsilon,\lambda} (g_\varepsilon) \leq \mathcal{F}_{\varepsilon,\lambda}(v_\varepsilon)$.\end{proof}

\subsection{Recovery functions for local geometric properties}\label{subsec: recovery functions}

Using Proposition \ref{prop: local smoothing} and our analysis from Subsection \ref{subsec: one dimensional profile} we now prove the following recovery type Lemma relating the energy of $v_\varepsilon$ to local geometric properties of constant mean curvature hypersurfaces. Recall that for an isolated singularity $p \in \overline{M}$ we fixed $0 < r_1 < r_2 < \min\{R_p,R_l\}$ such that $M \cap \overline{B_{r_2}(p)} \setminus B_{r_1}(p)$ is smooth.

\begin{lem}\label{lem: recovery functions}
Let $F \in \mathcal{C}(N)$ be such that $F \setminus B_{r_1}(p) = E \setminus B_{r_1}(p)$ and
\begin{equation*}
    \mathcal{F}_\lambda(F) = \inf_{G \in \mathcal{C}(N)} \{ \mathcal{F}_\lambda(F) \: | \: G \setminus B_{r_1}(p) = E \setminus B_{r_1}(p) \}.
\end{equation*}
Then, for $\varepsilon > 0$ sufficiently small, there exist functions $f_\varepsilon \in \mathcal{A}_{\varepsilon,r_2}(p)$, so that $|f_\varepsilon| \leq 1$ and $f_\varepsilon = v_\varepsilon$ on $N \setminus B_{r_2}(p)$, such that 
\begin{equation*}
    \mathcal{F}_{\varepsilon,\lambda}(f_\varepsilon) \rightarrow \mathcal{F}_\lambda(F).
\end{equation*}
Furthermore, the functions $v_\varepsilon$ are such that $\mathcal{F}_{\varepsilon,\lambda}(v_\varepsilon) \leq \mathcal{F}_{\varepsilon,\lambda}(g_\varepsilon) + \tau_\varepsilon$, for some sequence $\tau_\varepsilon \rightarrow 0$ as $\varepsilon \rightarrow 0$, where the $g_\varepsilon$ are defined as in Lemma \ref{lem: function minimisation} for $B_{r_2}(p)$, if and only if $E$ solves the above minimisation problem.
\end{lem}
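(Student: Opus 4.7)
The plan has two components: constructing the recovery sequence $f_\varepsilon$, and deriving the equivalence from the interplay between the recovery property and the minimality of $g_\varepsilon$.

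\textbf{Construction of $f_\varepsilon$.} Since $F$ coincides with $E$ outside $B_{r_1}(p)$ and $\overline{M}$ is smooth in the annulus $B_{r_2}(p) \setminus \overline{B_{r_1}(p)}$, Proposition \ref{prop: local smoothing} applies: for each $\delta > 0$ it produces $F_\delta \in \mathcal{C}(N)$ with $\partial F_\delta$ smooth throughout $B_{r_2}(p)$, coincident with $E$ outside a fixed $\tilde{r}_2 \in (r_1, r_2)$, and with $\mathcal{F}_\lambda(F_\delta) \to \mathcal{F}_\lambda(F)$ as $\delta \to 0$. By Remark \ref{rem: local smoothings satisfy geodesic touching lemma}, $\partial F_\delta$ satisfies the geodesic touching conclusion of Lemma \ref{lem: geodesic touching}, so the entire signed-distance analysis of Section \ref{sec: signed distance and one-dimensional profile} transfers verbatim with $\overline{M}$ replaced by $\partial F_\delta$. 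Setting
\begin{equation*}
w_{\delta,\varepsilon} := \overline{\mathbb{H}}^\varepsilon \circ d^\pm_{\partial F_\delta}
\end{equation*}
with the sign convention induced by $F_\delta$, the co-area computation from Subsection \ref{subsec: one dimensional profile} yields $\mathcal{F}_{\varepsilon,\lambda}(w_{\delta,\varepsilon}) \to \mathcal{F}_\lambda(F_\delta)$ as $\varepsilon \to 0$. Since $\partial F_\delta = \overline{M}$ on $N \setminus B_{\tilde{r}_2}(p)$ and $\overline{\mathbb{H}}^\varepsilon \equiv \pm 1$ outside $[-2\varepsilon\Lambda_\varepsilon, 2\varepsilon\Lambda_\varepsilon]$, a pointwise check confirms $w_{\delta,\varepsilon} = v_\varepsilon$ on $N \setminus B_{r_2}(p)$ once $2\varepsilon\Lambda_\varepsilon < r_2 - \tilde{r}_2$. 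A standard diagonal extraction then produces $f_\varepsilon := w_{\delta(\varepsilon),\varepsilon} \in \mathcal{A}_{\varepsilon, r_2}(p)$, defined for all sufficiently small $\varepsilon > 0$, with $\mathcal{F}_{\varepsilon,\lambda}(f_\varepsilon) \to \mathcal{F}_\lambda(F)$.

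\textbf{The equivalence.} Admissibility gives $\mathcal{F}_{\varepsilon,\lambda}(g_\varepsilon) \leq \mathcal{F}_{\varepsilon,\lambda}(f_\varepsilon)$. For the $(\Rightarrow)$ implication, chaining this with the assumed inequality and passing $\varepsilon \to 0$ using $\mathcal{F}_{\varepsilon,\lambda}(v_\varepsilon) \to \mathcal{F}_\lambda(E)$ and $\mathcal{F}_{\varepsilon,\lambda}(f_\varepsilon) \to \mathcal{F}_\lambda(F)$ delivers $\mathcal{F}_\lambda(E) \leq \mathcal{F}_\lambda(F)$; together with $\mathcal{F}_\lambda(F) \leq \mathcal{F}_\lambda(E)$ (automatic, since $E$ is admissible and $F$ minimises), this forces $\mathcal{F}_\lambda(E) = \mathcal{F}_\lambda(F)$, so $E$ solves the minimisation problem. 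For the $(\Leftarrow)$ implication, if $E$ is itself a minimiser then $v_\varepsilon$ already serves as the recovery sequence for $F = E$, so $\limsup \mathcal{F}_{\varepsilon,\lambda}(g_\varepsilon) \leq \mathcal{F}_\lambda(E) = \lim \mathcal{F}_{\varepsilon,\lambda}(v_\varepsilon)$. Combined with the Modica $\liminf$ inequality applied to any $L^1$ subsequential limit $G$ of $g_\varepsilon$ (which satisfies $G \setminus B_{r_2}(p) = E \setminus B_{r_2}(p)$) and the observation that $G$ may be modified in the smooth annulus, via the local $\mathcal{F}_\lambda$-minimality of smooth CMC hypersurfaces from Proposition B.1 of \cite{bellettini-wickramasekera-cmc-reg}, into an admissible $B_{r_1}$-competitor of no greater energy, one obtains $\liminf \mathcal{F}_{\varepsilon,\lambda}(g_\varepsilon) \geq \mathcal{F}_\lambda(E)$. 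Hence $\tau_\varepsilon := \mathcal{F}_{\varepsilon,\lambda}(v_\varepsilon) - \mathcal{F}_{\varepsilon,\lambda}(g_\varepsilon) \geq 0$ tends to zero, delivering the desired inequality.

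\textbf{Main obstacle.} The delicate technical step is verifying $w_{\delta,\varepsilon} = v_\varepsilon$ on $N \setminus B_{r_2}(p)$: points just outside $B_{r_2}(p)$ whose closest point on $\partial F_\delta$ lies inside $B_{\tilde{r}_2}(p)$ must be at distance exceeding $2\varepsilon\Lambda_\varepsilon$ from $\partial F_\delta$ by the collar width $r_2 - \tilde{r}_2$, at which scale $\overline{\mathbb{H}}^\varepsilon$ has already saturated at $\pm 1$ (matching $v_\varepsilon$ by agreement of signs between $F_\delta$ and $E$); elsewhere $d^\pm_{\partial F_\delta} = d^\pm_{\overline{M}}$ automatically. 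The converse implication also conceals a comparison between the $B_{r_1}$- and $B_{r_2}$-minimisation classes, closed via the local minimising property of smooth CMC in the annulus.
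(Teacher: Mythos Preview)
Your construction of $f_\varepsilon$ and your argument for the $(\Rightarrow)$ implication coincide with the paper's: smooth $F$ via Proposition~\ref{prop: local smoothing}, place $\overline{\mathbb{H}}^\varepsilon$ over the signed distance to $\partial F_\delta$, check admissibility using the collar width $r_2-\tilde r_2$, and diagonalise (the paper makes the explicit choice $\delta=\varepsilon/(1+\lambda)$, but this is cosmetic).

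The difference lies in how the converse is handled. The paper argues by contrapositive: if $E$ does \emph{not} minimise, there is a fixed $\zeta>0$ with $\mathcal{F}_\lambda(E)>\mathcal{F}_\lambda(F)+\zeta$, and then $\mathcal{F}_{\varepsilon,\lambda}(v_\varepsilon)\geq \mathcal{F}_{\varepsilon,\lambda}(f_\varepsilon)+\zeta/2\geq \mathcal{F}_{\varepsilon,\lambda}(g_\varepsilon)+\zeta/2$ for small $\varepsilon$, ruling out any $\tau_\varepsilon\to 0$. Observe that this is logically just the contrapositive of $(\Rightarrow)$ --- the paper does not supply a separate argument for $B\Rightarrow A$, and only the direction $A\Rightarrow B$ is ever invoked downstream (in the proof of Theorem~\ref{thm: local minimisation in positive ricci}).

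You instead attempt $B\Rightarrow A$ directly, via the Modica--Mortola $\liminf$ inequality applied to a subsequential $L^1$-limit $G$ of the $g_\varepsilon$, followed by a comparison of the $B_{r_2}$-competitor $G$ against the $B_{r_1}$-minimiser $E$. The step you describe as ``closed via the local minimising property of smooth CMC in the annulus'' carries a genuine gap: Proposition~B.1 of \cite{bellettini-wickramasekera-cmc-reg} gives $\mathcal{F}_\lambda$-minimality only in sufficiently small balls about smooth points, and does not immediately imply that replacing $G$ by $E$ across the whole annulus $B_{r_2}(p)\setminus B_{r_1}(p)$ is energy-nonincreasing --- a covering by small balls does not obviously patch, since the modifications in overlapping balls interfere. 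Since this direction is not used anywhere in the paper, the cleanest fix is simply to adopt the paper's contrapositive formulation and drop your Modica-based argument; the remaining content of your proof then aligns with the paper's.
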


\begin{rem}
There exist minimisers to the variational problem in Lemma \ref{lem: recovery functions} by \cite[Section 12.5]{maggi_2012}.
\end{rem}

\begin{proof}
Let $F$ solve the minimisation problem and consider, for each $\varepsilon, \delta > 0$, the functions $f_{\varepsilon,\delta} = \overline{\mathbb{H}}^\varepsilon \circ d^\pm_{\partial F_\delta}$ where $F_\delta$ is a local smoothing of $F$ as in Proposition \ref{prop: local smoothing} and we define the signed distance function to $\partial F_\delta$ by \begin{equation*}
    d^\pm_{\partial F_\delta} = 
    \begin{cases}
    + d_{\partial F_\delta} (x), \text{ if } x \in F_\delta\\
    0, \text{ if } x \in \partial F_\delta\\
    - d_{\partial F_\delta} (x), \text{ if } x \in N \setminus F_\delta
    \end{cases},
\end{equation*}
where here $d_{\partial F_\delta}$ is the usual distance function to the set $\partial F_\delta$. Note that the functions $ f_{\varepsilon,\delta} \in \mathcal{A}_{\varepsilon,r_2}(p)$ for $\varepsilon > 0$ small enough so that $2\varepsilon \Lambda_\varepsilon < r_2 - \tilde{r}_2$, where $\tilde{r}_2$ is chosen as in Proposition \ref{prop: local smoothing}. To see this we note that as $F_\delta= E$ outside of $B_{\tilde{r}_2}(p)$ we have that $d_{\partial F_\delta} = d_{\overline{M}}$ on the set $(N \setminus B_{r_2}(p)) \cap \{d_{\partial F_\delta} \leq 2\varepsilon\Lambda_\varepsilon\}$ and hence $f_{\varepsilon,\delta} = v_\varepsilon$ on  $N \setminus B_{r_2}(p)$.

\bigskip

By Remark \ref{rem: local smoothings satisfy geodesic touching lemma} the hypersurfaces $\partial F_\delta$ satisfy the conclusions of Lemma \ref{lem: geodesic touching}. Hence we may repeat the analysis for $\partial F_\delta$ (of the cut locus, level sets etc.) as we did for $M$ in Subsection \ref{subsec: one dimensional profile} in order to conclude that $\mathcal{F}_{\varepsilon,\lambda}(f_{\varepsilon,\delta}) \rightarrow \mathcal{F}_\lambda(F_\delta)$ as $\varepsilon \rightarrow 0$. By Proposition \ref{prop: local smoothing} we have that $|\mathcal{F}_\lambda(F_\delta) - \mathcal{F}_\lambda(F)| \leq (1+ \lambda)\delta$ and thus by setting $f_\varepsilon = f_{\varepsilon,\frac{\varepsilon}{1 + \lambda}}$ we ensure that this sub-sequence is such that $\mathcal{F}_{\varepsilon,\lambda}(f_\varepsilon) \rightarrow \mathcal{F}_\lambda(F)$ as desired.

\bigskip

Under the assumption that the functions $v_\varepsilon$ are such that $\mathcal{F}_{\varepsilon,\lambda}(v_\varepsilon) \leq \mathcal{F}_{\varepsilon,\lambda}(g_\varepsilon) + \tau_\varepsilon$, for some sequence $\tau_\varepsilon \rightarrow 0$ as $\varepsilon \rightarrow 0$, and the fact that $f_\varepsilon \in \mathcal{A}_{\varepsilon,r_2}(p)$ we see that
\begin{equation*}
    \mathcal{F}_{\varepsilon,\lambda}(v_\varepsilon) \leq \mathcal{F}_{\varepsilon,\lambda}(g_\varepsilon) + \tau_\varepsilon \leq \mathcal{F}_{\varepsilon,\lambda}(f_\varepsilon) + \tau_\varepsilon,
\end{equation*}
where here we are using that $\mathcal{F}_{\varepsilon,\lambda}(g_\varepsilon) = \inf_{u \in \mathcal{A}_{\varepsilon,r_2}(p)} \mathcal{F}_{\varepsilon,\lambda}(u) \leq  \mathcal{F}_{\varepsilon,\lambda}(f_\varepsilon)$ by construction. Hence, by (\ref{eqn: 1d profile approx cmc energy}) and the fact that $\mathcal{F}_{\varepsilon,\lambda}(f_\varepsilon) \rightarrow \mathcal{F}_\lambda(F)$ from the above, letting $\varepsilon \rightarrow 0$ we conclude that
\begin{equation*}
    \mathcal{F}_\lambda(E) \leq \mathcal{F}_\lambda(F),
\end{equation*}
so $E$ solves the Caccioppoli set minimisation above. If $E$ does not solve the minimisation problem above then there exists a $\zeta > 0$ such that
\begin{equation*}
    \mathcal{F}_\lambda(E) > \mathcal{F}_\lambda(F) + \zeta,
\end{equation*}
for any solution $F \in \mathcal{C}(N)$ as above. By (\ref{eqn: 1d profile approx cmc energy}), the fact that $\mathcal{F}_{\varepsilon,\lambda}(f_\varepsilon) \rightarrow \mathcal{F}_\lambda(F)$ and $\mathcal{F}_{\varepsilon,\lambda}(g_\varepsilon) \leq \mathcal{F}_{\varepsilon,\lambda}(f_\varepsilon)$ we have
\begin{equation*}
    \mathcal{F}_{\varepsilon,\lambda}(v_\varepsilon) \geq \mathcal{F}_{\varepsilon,\lambda}(f_\varepsilon) + \frac{\zeta}{2} \geq \mathcal{F}_{\varepsilon,\lambda}(g_\varepsilon) + \frac{\zeta}{2},
\end{equation*}
for all $\varepsilon > 0$ sufficiently small. Thus there exists no sequence $\tau_\varepsilon \rightarrow 0$ as $\varepsilon \rightarrow 0$ such that $\mathcal{F}_{\varepsilon,\lambda}(v_\varepsilon) \leq \mathcal{F}_{\varepsilon,\lambda}(g_\varepsilon) + \tau_\varepsilon$, concluding the proof of the equivalence.
\end{proof}

\begin{rem}\label{rem: almost min area min cones}
Lemma \ref{lem: recovery functions} combined with the results of \cite[Section 28.2]{maggi_2012} imply that if the functions $v_\varepsilon$ are such that $\mathcal{F}_{\varepsilon,\lambda}(v_\varepsilon) \leq \mathcal{F}_{\varepsilon,\lambda}(g_\varepsilon) + \tau_\varepsilon$, for some sequence $\tau_\varepsilon \rightarrow 0$ as $\varepsilon \rightarrow 0$, then any tangent cone to the hypersurface $M$ at a point of $B_{r_1}(p) \cap \partial F$ is area-minimising (in the sense that the cone is a perimeter minimiser in $\mathbb{R}^{n+1}$).
\end{rem}

\section{Construction of paths}\label{sec: paths}

In order to prove Theorem \ref{thm: local minimisation in positive ricci} we will work under the assumption that the functions $v_\varepsilon = \overline{\mathbb{H}}^\varepsilon \circ d^\pm_{\overline{M}}$, defined in Subsection \ref{subsec: one dimensional profile}, are such that $\mathcal{F}_{\varepsilon,\lambda}(v_\varepsilon) \geq \mathcal{F}_{\varepsilon,\lambda}(g_\varepsilon) + \tau$ for some $\tau > 0$, where the $g_\varepsilon$ are defined as in Lemma \ref{lem: function minimisation} in a ball centred on an isolated singularity of $M$. We then exhibit a continuous path in $W^{1,2}(N)$, from $a_\varepsilon$ to $b_\varepsilon$, with energy along the path bounded a fixed amount below $\mathcal{F}_\lambda(E)$, independent of $\varepsilon$. In this manner we will have exhibited, see Section \ref{sec: conclusion of the proof}, an admissible path in the min-max procedure of \cite{bellettini-wickramasekera} which contradicts the assumption that $M$ arose from this construction and allows us to establish Theorem \ref{thm: local minimisation in positive ricci}. In the following Subsections we construct the separate pieces of our desired path, with maximum energy along these pieces bounded a fixed amount below $\mathcal{F}_\lambda(E)$, independently of $\varepsilon$.

\subsection{Paths with energy drop fixing a ball}
\label{subsec: shifted paths}

We first construct the shifted functions described in Step 2 of Subsection \ref{subsec: strategy} and compute upper energy bounds.

\subsubsection{Choosing radii for local properties}\label{subsec: choosing radii}

We fix various radii, dependent only on the geometry of $M$ around an isolated singularity, in order to later define functions for our path.

\bigskip

Let $p \in \mathrm{Sing}(M)$ be an isolated singularity of $M$, there then exists an $R_p > 0$ such that $\overline{B_{R_p}(p)} \cap \mathrm{Sing}(M) = \{p\}$, i.e.~such that $M \cap \overline{B_{R_p}(p)} \setminus \{p\}$ is smooth. 

\bigskip

The bounded mean curvature of $M$ (specifically as the monotonicity formula holds at all points of $\overline{M}$) provides Euclidean volume growth, namely that there exists two constants $C_p, r_p > 0$ both depending on the point $p$ such that for all $r < r_p$ we have
\begin{equation}\label{eqn: monotonicity gives euclidean volume growth for M around p}
   \mathcal{H}^n(M \cap B_{r_p}(p)) \leq C_pr^n.
\end{equation}
We first fix $r_0 \in (0, \min\{r_p, R_p\})$ such that 
\begin{equation}\label{eqn: fixed amount of M outside a ball}
    \mathcal{H}^n(M \setminus B_{r_0}(p)) > \frac{3\mathcal{H}^n(M)}{4}.
\end{equation}
Set $\widehat{K} = M \cap (\overline{B_{R_p}(p)} \setminus B_{r_0}(p))$, which is compact in $M$, so by Remark \ref{rem: two-sided neighbourhoods of compact subsets of M} we ensure that there exists a $c_{\widehat{K}} > 0$ such that $F(\widehat{K} \times (-c_{\widehat{K}}, c_{\widehat{K}})) \cap (\mathrm{Sing}(M) \cup \overline{S_{d_{\overline{M}}}}) = \emptyset$. Defining on $K \times (-c_{\widehat{K}},c_{\widehat{K}}) \subset V_M$ the function
\begin{equation}\label{eqn: definition of h}
    h(x,a) = \mathrm{dist}_N(F(\Pi_{V_M}(x,a)),p)
\end{equation}
we ensure that for $(x,a) \in K \times (-c_{\widehat{K}},c_{\widehat{K}}) \subset V_M$ we have, by Remark \ref{rem: projection definition}, that
\begin{equation}\label{eqn: gradient bound for h}
    |\nabla h(x,a)|_{(x,a)} \leq C_h,
\end{equation}
where here we are computing the gradient, $\nabla$, on $V_M$ with respect to the pullback metric $F^*g$, for some constant $C_h > 0$ dependent only on $\widehat{K}$, $N$ and $g$.

\bigskip

We may now fix $r < \frac{1}{4}(\min\{r_p,R_p\} - r_0)$ sufficiently small to ensure that on the annulus $\mathcal{A} = M \cap (B_{r_0 + 3r}(p) \setminus B_{r_0 +2r})$ we have
\begin{equation}\label{eqn: annulus ratio for energy drop}
    \frac{\mathcal{H}^n(\mathcal{A})}{r^2} \leq \left(\frac{me^{-\frac{\lambda^2}{2m}}\mathcal{H}^n(M)}{8C^2_h} \right);
\end{equation}
where here we are using the Euclidean volume growth of the mass of $M$ (which follows from the monotonicity formula) to ensure that $\mathcal{H}^n(\mathcal{A})$ is of order $r^n$, combined with the assumption that, from Remark \ref{rem: singular dimensions}, $n \geq 7$. The above choice of $r$ will become clear when calculating the energy of the functions $v^{t,s}_\varepsilon$ in Subsection \ref{subsec: energy calculations for shifted functions}.

\bigskip

We now define the balls $B_i = B_{r_0 + ir}(p)$ for $i \in \{ 1,2,3,4\}$ which are such that $B_1 \subset \subset B_2 \subset \subset B_3 \subset \subset B_4$; with this notation we have that $\mathcal{A} = M \cap (\overline{B_3} \setminus B_1)$.

\subsubsection{Defining the shifted functions}\label{subsec: defining shifted functions}

We define, for $t \in [-t_1,t_1]$ where $t_1 > 0$ is to be chosen and all $\varepsilon > 0$ sufficiently small, functions $v^{t,s}_\varepsilon \in W^{1,2}(N)$ for $s \in [0,1]$ with the paths $t \in [-t_1,t_1] \rightarrow v^{t,s}_\varepsilon$ and $s \in [0,1] \rightarrow v^{t,s}_\varepsilon$ continuous in $W^{1,2}(N)$. The functions $v^{t,s}_\varepsilon$ will be defined so that the following properties hold for all $t \in [-t_1,t_1]$ and $s \in [0,1]$
\begin{equation}\label{eqn: properties of shifting functions}
    \begin{cases}
     v^{t,1}_\varepsilon = \overline{\mathbb{H}}^\varepsilon \circ ( d^\pm_{\overline{M}} - t) \text{ on } N\\
     v^{0,s}_\varepsilon = v_\varepsilon \text{ on } N\\
     v^{t,0}_\varepsilon = v_\varepsilon \text{ in } B_{r_0}(p) \subset B_1
    \end{cases},
\end{equation}
and in such a way that, for some $E(\varepsilon) \rightarrow 0$ as $\varepsilon \rightarrow 0$, we have
\begin{equation*}
    \mathcal{F}_{\varepsilon,\lambda}(v^{t,s}_\varepsilon) \leq \mathcal{F}_{\varepsilon,\lambda}(v_\varepsilon) + E(\varepsilon).
\end{equation*}
Furthermore, we will show in Lemma \ref{lem: shifting path upper energy bounds} that there exists some $0 < t_0 < t_1$ and $\eta > 0$ such that for all $s \in [0,1]$ we have
\begin{equation*}
\mathcal{F}_{\varepsilon,\lambda}(v^{\pm t_0,s}_\varepsilon) \leq \mathcal{F}_{\varepsilon,\lambda}(v_\varepsilon) - \eta + E(\varepsilon).    
\end{equation*}
Finally, by Lemma \ref{lem: sliding path upper energy bounds}, for $t > t_0$ we will have
\begin{equation*}
   \mathcal{F}_{\varepsilon,\lambda}(v^{\pm t}_\varepsilon) \leq \mathcal{F}_{\varepsilon,\lambda}(v^{\pm t_0,1}_\varepsilon) \leq \mathcal{F}_{\varepsilon,\lambda}(v_\varepsilon) - \eta + E(\varepsilon).
\end{equation*}
These upper bounds on the energy along the paths provided by the functions $v^{t,s}_\varepsilon$ will be calculated explicitly in Subsection \ref{subsec: energy calculations for shifted functions}.

\bigskip

Thus, assuming the above, and combined with the path provided in Subsection \ref{subsec: local path}, which changes the functions $v^{t,0}_\varepsilon$ only in $B_1$ to decrease their energy, we construct a path from $a_\varepsilon$ to $b_\varepsilon$ in $W^{1,2}(N)$ whose energy along the path remains bounded strictly below $\mathcal{F}_\lambda(E)$. We now proceed to define the $v^{t,s}_\varepsilon$ explicitly.

\bigskip

Consider $B_1 \subset \subset B_2 \subset \subset B_3 \subset \subset B_4$ centred at $p \in M$ as specified in Subsection \ref{subsec: choosing radii}. Let $K = M \cap \overline{B_4} \setminus B_1$, which is compact in $M$, and fix $c_K = c_{M \cap \overline{B_4} \setminus B_1} > 0$ as in Remark \ref{rem: two-sided neighbourhoods of compact subsets of M}; we then have that $K \times (-c_K, c_K) \subset V_M$ is a two-sided tubular neighbourhood of $M \cap \overline{B_4} \setminus B_1$ in $N$ under the map $F$. Note that we have $F\left(K \times (-c_K,c_K)\right) \cap (\mathrm{Sing}(M) \cup\overline{S_{d_{\overline{M}}}}) = \emptyset$ by the choice of $c_K$ and as $K \subset \widehat{K}$ we ensure that $c_K \leq c_{\widehat{K}}$ as in Subsection \ref{subsec: choosing radii}.

\bigskip

For $r > 0$ as chosen we define the following function on $M$,
\begin{equation}\label{eqn: definition of f}
    f(y) = \begin{cases}
    1 \text{ if } y \in M \setminus \overline{B_3}\\
    0 \text{ if } y \in M \cap \overline{B_2}\\
    \frac{1}{r}(\mathrm{dist}_N(y,p) - r) \text{ if } y \in \mathcal{A}.
    \end{cases}
\end{equation}
Then we have $0 \leq f \leq 1$ and set
\begin{equation*}
    f_s(x) = s + (1-s)f(x).
\end{equation*}
Recall, from Subsection \ref{subsec: level sets of distance function}, the definition of the smooth projection, $\Pi$ to $M$ defined on $N \setminus (\mathrm{Sing}(M) \cup \overline{S_{d_{\overline{M}}}})$. Fix $t_1 > 0$ and an $0< \varepsilon_2 < \varepsilon_1$ to ensure that $2\varepsilon\Lambda_{\varepsilon} + t_1 < \min\{c_K, \frac{r}{2}\}$, for all $0 < \varepsilon < \varepsilon_2$. We now define, for $t \in [-t_1,t_1]$, $s \in [0,1]$ and $0 < \varepsilon < \varepsilon_2$, the functions
\begin{equation*}
    v^{t,s}_\varepsilon(x) = \begin{cases}
    \overline{\mathbb{H}}^\varepsilon(d^{\pm}_{\overline{M}}(x) - t f_s(\Pi(x)))) \text{ if } x \in F(K \times [-c_K,c_K])\\
    \overline{\mathbb{H}}^\varepsilon(d^{\pm}_{\overline{M}}(x) - ts) \text{ if } x \in B_{r_0 + \frac{3r}{2}}(p)\\
    \overline{\mathbb{H}}^\varepsilon(d^{\pm}_{\overline{M}}(x) - t)\text{ if } x \in N \setminus B_{r_0 + \frac{7r}{2}}(p)\\
    1 \text{ if } x \in E \cap \{d^\pm_{\overline{M}} > 2\varepsilon\Lambda_\varepsilon + t_1\}\\
    -1 \text{ if } x \in (N \setminus E) \cap \{d^\pm_{\overline{M}} <  - 2\varepsilon\Lambda_\varepsilon - t_1\}\\
    \end{cases}.
\end{equation*}
We now show that the $v^{t,s}_\varepsilon$ are well defined Lipschitz functions on $N$ so that in particular, as $F(\mathcal{A} \times [-2\varepsilon_2\Lambda_{\varepsilon_2} - t_1,2\varepsilon_2\Lambda_{\varepsilon_2} + t_1]) \subset \subset \overline{B_{r_0 + \frac{7r}{2}}(p)} \setminus B_{r_0 + \frac{3r}{2}}(p)$ for $2\varepsilon_2
\Lambda_{\varepsilon_2} + t_1 < \frac{r}{2}$, we have
\begin{equation*}
    v^{t,s}_\varepsilon(x) = \overline{\mathbb{H}}^\varepsilon(d^\pm_{\overline{M}}(x) - tf_s(\Pi(x))) \text{ on } N \setminus (\mathrm{Sing}(M) \cup \overline{S_{d_{\overline{M}}}}).
\end{equation*}
Note that if $x \in \overline{B_{r_0 + \frac{7r}{2}}(p)} \setminus B_{r_0 + \frac{3r}{2}}(p)$ and $|d^\pm_{\overline{M}}| \leq 2\varepsilon\Lambda_\varepsilon + t_1$ then by the triangle inequality we see that $x \in F(K \times [-c_K,c_K])$. In $F(K \times [2\varepsilon_2\Lambda_{\varepsilon_2} + t_1, c_K])$ we have that $v^{t,s}_\varepsilon \equiv 1$ and in $F(K \times [-c_K, -(2\varepsilon_2\Lambda_{\varepsilon_2} + t_1)])$ we have that $v^{t,s}_\varepsilon \equiv -1$. Combining these two facts we see that the $v^{t,s}_\varepsilon$ are in fact Lipschitz functions in $\overline{B_{r_0 + \frac{7r}{2}}(p)} \setminus B_{r_0 + \frac{3r}{2}}(p)$.

\bigskip

Noting that $F(\mathcal{A} \times [-2\varepsilon_2\Lambda_{\varepsilon_2} - t_1,2\varepsilon_2\Lambda_{\varepsilon_2} + t_1]) \subset \subset \overline{B_{r_0 + \frac{7r}{2}}(p)} \setminus B_{r_0 + \frac{3r}{2}}(p)$ as $2\varepsilon_2
\Lambda_{\varepsilon_2} + t_1 < \frac{r}{2}$, we have by definition of $f$ that if $x \in B_{r_0 + \frac{3r}{2}}(p) \cap F(K \times [-c_K,c_K])$ then $v^{t,s}_\varepsilon = \overline{\mathbb{H}}^\varepsilon(d^\pm_{\overline{M}}(x) - ts)$, and if $x \in (N \setminus B_{r_0 + \frac{7r}{2}}(p)) \cap F(K \times [-c_K,c_K])$ then $v^{t,s}_\varepsilon = \overline{\mathbb{H}}^\varepsilon(d^\pm_{\overline{M}}(x) - t)$. 

\bigskip

Combining the above two paragraphs, we conclude that the $v^{t,s}_\varepsilon$ are indeed well defined continuous functions on $N$. Furthermore, as noted above, the $v^{t,s}_\varepsilon$ are Lipschitz in $\overline{B_{r_0 + \frac{7r}{2}}(p)} \setminus B_{r_0 + \frac{3r}{2}}(p)$, and in the sets $N \setminus B_{r_0 + \frac{7r}{2}}(p)$ and $B_{r_0 + \frac{3r}{2}}(p)$ by the $v^{t,s}_\varepsilon$ are Lipschitz by definiton (as on these sets $v^{t,s}_\varepsilon = \overline{\mathbb{H}}^\varepsilon(d^\pm_{\overline{M}}(x) - t)$ and $v^{t,s}_\varepsilon = \overline{\mathbb{H}}^\varepsilon(d^\pm_{\overline{M}}(x) - ts)$ respectively).

\bigskip

In conclusion we have that the $v^{t,s}_\varepsilon$ are continuous on $N$ and Lipschitz on $\overline{B_{r_0 + \frac{7r}{2}}(p)} \setminus B_{r_0 + \frac{3r}{2}}(p)$, $N \setminus B_{r_0 + \frac{7r}{2}}(p)$ and $B_{r_0 + \frac{3r}{2}}(p)$; hence we have that the $v^{t,s}_\varepsilon \in W^{1,\infty}(N) \subset W^{1,2}(N)$. Note also that by the definition of the $v^{t.s}_\varepsilon$ we have that (\ref{eqn: properties of shifting functions}) holds as desired.

\subsubsection{Energy calculations and continuity of paths}\label{subsec: energy calculations for shifted functions}

We calculate an upper bound on the energy of the functions $v^{t,s}_\varepsilon$ for $t \in [t_0,t_0]$, for some $0 < t_0 \leq t_1$, all $s \in [0,1]$ and $\varepsilon > 0$ sufficiently small. The calculation method here is similar to those in \cite[Sections 4, 6.1 and 7.1]{Bellettini-Workman}.

\begin{lem}\label{lem: shifting path upper energy bounds}
There exists $t_0 \in (0, t_1)$ such that for all $s \in [0,1]$ and $t \in [-t_0,t_0]$ we have that
\begin{equation*}
    \mathcal{F}_{\varepsilon,\lambda}(v^{t,s}_\varepsilon) \leq \mathcal{F}_{\varepsilon,\lambda}(v_\varepsilon) + E(\varepsilon)
\end{equation*}
where $E(\varepsilon) \rightarrow 0$ as $\varepsilon \rightarrow 0$. Furthermore, we have that, for all $\varepsilon > 0$ sufficiently small there exists
\begin{equation*}
    \eta = \frac{m\sigma}{8}\mathcal{H}^n(M)t_0^2> 0
\end{equation*} such that for all $s \in [0,1]$ we have
\begin{equation*}
    \mathcal{F}_{\varepsilon,\lambda}(v^{\pm t_0,s}_\varepsilon) \leq \mathcal{F}_{\varepsilon,\lambda}(v_\varepsilon) - \eta.
\end{equation*}
\end{lem}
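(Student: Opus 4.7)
The plan is to compute $\mathcal{F}_{\varepsilon,\lambda}(v^{t,s}_\varepsilon)$ in the tubular coordinates $V_M$ of Subsection \ref{subsec: level sets of distance function}, in which a point of $N\setminus(\mathrm{Sing}(M)\cup\overline{S_{d_{\overline{M}}}})$ is written as $(y,a)$ with $y=\Pi(x)$ and $a=d^\pm_{\overline{M}}(x)$, and the volume element is $\theta(y,a)\,dy\,da$. In these coordinates $v^{t,s}_\varepsilon(y,a) = \overline{\mathbb{H}}^\varepsilon(a - tf_s(y))$ on $F(K\times[-c_K,c_K])$. Since $\Pi$ is invariant along the normal geodesics to $M$, $\nabla(f_s\circ\Pi)$ is orthogonal to $\nabla d^\pm_{\overline{M}}$, and so
\begin{equation*}
|\nabla v^{t,s}_\varepsilon|^2 = \bigl((\overline{\mathbb{H}}^\varepsilon)'(a - tf_s(y))\bigr)^2\bigl(1 + t^2|\nabla(f_s\circ\Pi)|^2\bigr).
\end{equation*}
After substituting $u = a - tf_s(y)$, the energy splits as $\mathcal{F}_{\varepsilon,\lambda}(v^{t,s}_\varepsilon) = I_\varepsilon(t,s) + R_\varepsilon(t,s)$, where $I_\varepsilon$ is a weighted sliding-type integral (with $\theta(y, u + tf_s(y))$ in place of $\theta(y,u+t)$) and $R_\varepsilon$ is the gradient remainder arising from the $t^2|\nabla(f_s\circ\Pi)|^2$ contribution, supported on the preimage of the annulus $\mathcal{A}$.

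For $I_\varepsilon(t,s)$, I will mimic the derivation of Lemma \ref{lem: sliding path upper energy bounds}: the differential equation (\ref{eqn: differential equation for area elements log form}) combined with the Ricatti estimate (\ref{eqn: Ricatti equation}) gives, for $|u|\leq 2\varepsilon\Lambda_\varepsilon$, the pointwise bound
\begin{equation*}
\theta(y,u + tf_s(y)) \leq \theta(y,u)\exp\bigl(-\lambda tf_s(y) - \tfrac{m}{2}t^2 f_s(y)^2 + O(\varepsilon\Lambda_\varepsilon\, t)\bigr),
\end{equation*}
analogous to (\ref{eqn: first bound on area elements}). Expanding to second order in $t$, the linear-in-$t$ contribution is cancelled by the corresponding expansion of $-\tfrac{\lambda}{2}\int_N v^{t,s}_\varepsilon$ (this is the diffuse counterpart of $M$ being $\mathcal{F}_\lambda$-critical, exactly as in the derivation of (\ref{eqn: geometry term})), while the $O(t^2)$ term survives as a strict drop. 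Using $f_s\equiv 1$ on $M\setminus B_3$, together with (\ref{eqn: convergence of measure of level sets to M}) and (\ref{eqn: 1d energy approx 1}), one obtains $I_\varepsilon(t,s)\leq \mathcal{F}_{\varepsilon,\lambda}(v_\varepsilon) - c_1\, m\,\sigma\, t^2\, \mathcal{H}^n(M\setminus B_3) + E_1(\varepsilon)$ for an absolute $c_1 > 0$ and $E_1(\varepsilon)\to 0$. After shrinking $r$ further if needed so that (\ref{eqn: monotonicity gives euclidean volume growth for M around p}) forces $\mathcal{H}^n(M\cap(B_3\setminus B_{r_0})) \leq \mathcal{H}^n(M)/4$, the choice (\ref{eqn: fixed amount of M outside a ball}) of $r_0$ yields $\mathcal{H}^n(M\setminus B_3)\geq \mathcal{H}^n(M)/2$.

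The remainder $R_\varepsilon(t,s)$ is controlled using $|\nabla(f_s\circ\Pi)| \leq C_h/r$, which follows from the definition (\ref{eqn: definition of f}) and the gradient bound (\ref{eqn: gradient bound for h}). Integrating in $u$ against $\tfrac{\varepsilon}{2}((\overline{\mathbb{H}}^\varepsilon)')^2$ (which contributes a factor tending to $\sigma$ by (\ref{eqn: 1d energy approx 1})) gives $R_\varepsilon(t,s) \leq c_2\, C_h^2\, t^2\, r^{-2}\,\mathcal{H}^n(\mathcal{A})\,(1 + E_2(\varepsilon))$, and the calibration (\ref{eqn: annulus ratio for energy drop}) of $r$ ensures this is strictly dominated by the gain in $I_\varepsilon$. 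Combining gives $\mathcal{F}_{\varepsilon,\lambda}(v^{t,s}_\varepsilon) \leq \mathcal{F}_{\varepsilon,\lambda}(v_\varepsilon) - c_3\, m\sigma\,\mathcal{H}^n(M)\, t^2 + E(\varepsilon)$ with $c_3 > 0$ and $E(\varepsilon) = E_1(\varepsilon)+E_2(\varepsilon) \to 0$. Dropping the negative term proves the first bound for all $t\in[-t_1,t_1]$, $s\in[0,1]$; choosing $t_0\in(0,t_1)$ so that the quadratic drop at $t=\pm t_0$ dominates $E(\varepsilon)$ for all sufficiently small $\varepsilon$, and tracking constants, yields the stated $\eta = \tfrac{m\sigma}{8}\mathcal{H}^n(M)t_0^2$. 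The main technical obstacle I anticipate is making the second-order Taylor expansion of $\theta(y,\cdot)$ and the various $o_\varepsilon(1)$ corrections uniform in $y \in M$ and in $(t,s)\in[-t_1,t_1]\times[0,1]$, which should follow from compactness of the relevant portions of $M$ (via $\widehat{K}$) and uniform continuity of the Jacobian of $F$ on $\widehat{K}\times[-c_{\widehat{K}},c_{\widehat{K}}]$.
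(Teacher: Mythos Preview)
Your overall strategy matches the paper's: work in the tubular coordinates $V_M$, split the energy into a sliding-type part and a gradient remainder supported over $\mathcal{A}$, use positive Ricci curvature to extract a quadratic-in-$t$ drop, and invoke (\ref{eqn: annulus ratio for energy drop}) to absorb $R_\varepsilon$. The technical execution differs, however. Rather than substituting $u=a-tf_s(y)$ and inserting the exponential upper bound on $\theta$, the paper writes the \emph{difference} $\mathcal{F}_{\varepsilon,\lambda}(v^{t,s}_\varepsilon)-\mathcal{F}_{\varepsilon,\lambda}(v_\varepsilon)$ directly, applies the Fundamental Theorem of Calculus in $t$ (introducing an integral $\int_0^t\cdots dr$), and then integrates by parts in $a$ using $\partial_a\theta=-H\theta$. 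This produces the factor $(\lambda-H(x,a))$ inside the integral as an identity, so the linear cancellation you describe is automatic rather than obtained by matching two separate expansions; the Riccati inequality (\ref{eqn: geometry along level sets}) is applied only at this final stage, where the integrand already has a definite sign. The paper also restricts the drop computation to a fixed compact $L\subset M\setminus B_4$ with the property (\ref{eqn: fixed area for energy drop}), which is exactly the device needed for the uniformity you anticipate in your last sentence.

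Your route should work but carries one hazard worth naming. The exponential bound on $\theta(y,u+tf_s)$ is one-sided, while the volume integrand $-\tfrac{\lambda}{2}\overline{\mathbb{H}}^\varepsilon(u)\,\theta(y,u+tf_s)$ changes sign in $u$ and is \emph{not} supported in $\{|u|\le 2\varepsilon\Lambda_\varepsilon\}$; you therefore cannot simply insert the inequality into all of $I_\varepsilon$ and expect the linear terms to cancel. You would need either a matching lower bound on $\theta$ (with an $O(\varepsilon\Lambda_\varepsilon)$ error coming from continuity of $H$ near $a=0$), or to treat the volume term by an exact computation and reserve the one-sided bound for the Allen--Cahn part alone. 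The paper's integration-by-parts route also naturally generates the boundary contributions at $a=\sigma^\pm(y)$ (its terms (\ref{eqn: sigma + term})--(\ref{eqn: sigma - term})); your substitution produces the analogous domain-shift errors, which you do not mention but which must be shown to be $o_\varepsilon(1)$ via the argument leading to (\ref{eqn: sigma + error null})--(\ref{eqn: sigma - error null}).
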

\begin{proof}
We define, for $(x,a) \in V_M$, $\hat{v}^{t,s}_\varepsilon ((x,a)) = v^{t,s}_\varepsilon (F(x,a))$ so that, as $\Pi \circ F = F \circ \Pi_{V_M}$, we have that
\begin{equation}\label{eqn: shifting functions on cylinder}
    \hat{v}^{t,s}_\varepsilon(x,a) = \overline{\mathbb{H}}^\varepsilon(a - tf_s(F(\Pi_{V_M}(x,a)))) \text{ on } V_M.
\end{equation}
Note that by (\ref{eqn: definition of h}) and (\ref{eqn: definition of f}) we have for points $(x,a) \in V_M$ with $x \in \mathcal{A}$ that 
\begin{equation*}
   \nabla \big(f_s(F(\Pi_{V_M}(x,a)))\big) = \frac{1-s}{r}\nabla h(x,a),
\end{equation*} 
and if $x \in M \setminus \mathcal{A}$ then $f$ is constant and so 
\begin{equation*}
    \nabla \big(f_s(F(\Pi_{V_M}(x,a)))\big) = 0.
\end{equation*}
We then compute by the generalised Gauss' Lemma, \cite[Chapter 2.4]{tubes}, that we have
\begin{equation}\label{eqn: Gauss lemma on shifting functions}
    |\nabla \hat{v}^{t,s}_\varepsilon(x,a)|_{(x,a)}^2 = \left((\overline{\mathbb{H}}^\varepsilon)'(a - tf_s(x)\right)^2 \left(1 + \frac{(1-s)^2t^2}{r^2}|\nabla h(x,a)|_{(x,a)}^2 \chi_{\mathcal{A} \times \mathbb{R}}(x,a)\right),
\end{equation}
where here $\chi_{\mathcal{A} \times
\mathbb{R}}$ is the indicator function for the set $\mathcal{A} \times
\mathbb{R}$ on $V_M$.

\bigskip

Using the co-area formula (slicing with $a$) and Fubini's Theorem we have
\begin{align*}
    2\sigma\mathcal{F}_{\varepsilon,\lambda}(v^{t,s}_\varepsilon) &= \int_{V_M} e_\varepsilon(\hat{v}^{t,s}_\varepsilon) - \sigma\lambda\hat{v}^{t,s}_\varepsilon d\mathcal{H}^{n+1}_{F^*g}\\
    &= \int_{V_M} \frac{\varepsilon}{2}|\nabla \hat{v}^{t,s}_\varepsilon|^2 + \frac{W(\hat{v}^{t,s}_\varepsilon)}{\varepsilon} - \sigma\lambda\hat{v}^{t,s}_\varepsilon d\mathcal{H}^{n+1}_{F^*g}(x,a)\\
    &= \int_{V_M} \frac{\varepsilon}{2}\left((\overline{\mathbb{H}}^\varepsilon)'(a - tf_s(x))\right)^2 + \frac{W(\overline{\mathbb{H}}^\varepsilon(a - tf_s(x))}{\varepsilon} - \sigma\lambda\overline{\mathbb{H}}^\varepsilon(a - tf_s(x)) d\mathcal{H}^{n+1}_{F^*g}(x,a)\\
    &\quad\quad + \int_{V_M} \frac{\varepsilon}{2}\left((\overline{\mathbb{H}}^\varepsilon)'(a - tf_s(x))\right)^2 \frac{(1-s)^2t^2}{r^2}|\nabla h(x,a)|_{(x,a)}^2  \chi_{\mathcal{A} \times \mathbb{R}}(x,a) d\mathcal{H}^{n+1}_{F^*g}(x,a)\\
    &= \int_M \int_{\sigma^-(x)}^{\sigma^+(x)} \left[e_\varepsilon\left(\overline{\mathbb{H}}^\varepsilon(a - tf_s(x))\right) - \sigma\lambda\overline{\mathbb{H}}^\varepsilon(a - tf_s(x)) \right]\theta(x,a) da\, d\mathcal{H}^n(x)\\
    &\quad\quad + \int_{\mathcal{A}} \int_{\sigma^-(x)}^{\sigma^+(x)} \frac{\varepsilon}{2}\left((\overline{\mathbb{H}}^\varepsilon)'(a - tf_s(x))\right)^2 \frac{(1-s)^2t^2}{r^2}|\nabla h(x,a)|_{(x,a)}^2 \theta(x,a) da\, d\mathcal{H}^n(x).
\end{align*}
Then, as $v_\varepsilon = v^{0,s}_\varepsilon$ for any $s \in [0,1]$, we have
\begin{align}
    \label{eqn: energy term}
    2\sigma(\mathcal{F}_{\varepsilon,\lambda}(v^{t,s}_\varepsilon) &- \mathcal{F}_{\varepsilon,\lambda}(v_\varepsilon)) =  \int_M \int_{\sigma^-(x)}^{\sigma^+(x)} \left[e_\varepsilon\left(\overline{\mathbb{H}}^\varepsilon(a - tf_s(x))\right) - e_\varepsilon\left(\overline{\mathbb{H}}^\varepsilon(a)\right) \right]\theta(x,a) da\, d\mathcal{H}^n(x)\\
    \label{eqn: 1d term}
    &\quad\quad\quad -\int_M \int_{\sigma^-(x)}^{\sigma^+(x)} \sigma\lambda\left[\overline{\mathbb{H}}^\varepsilon(a - tf_s(x)) - \overline{\mathbb{H}}^\varepsilon(a) \right]\theta(x,a) da\, d\mathcal{H}^n(x)\\
    \label{eqn: gradient term}
    &\quad + \int_{\mathcal{A}} \int_{\sigma^-(x)}^{\sigma^+(x)} \frac{\varepsilon}{2}\left((\overline{\mathbb{H}}^\varepsilon)'(a - tf_s(x))\right)^2 \frac{(1-s)^2t^2}{r^2}|\nabla h(x,a)|_{(x,a)}^2 \theta(x,a) da\, d\mathcal{H}^n(x)
\end{align}
We now analyse the three terms above separately. 

\bigskip

First, consider (\ref{eqn: 1d term}), by the Fundamental Theorem of Calculus and Fubini's Theorem we see that
\begin{align*}
    (\ref{eqn: 1d term}) &= -\int_M \int_{\sigma^-(x)}^{\sigma^+(x)} \sigma\lambda\left[\overline{\mathbb{H}}^\varepsilon(a - tf_s(x)) - \overline{\mathbb{H}}^\varepsilon(a) \right]\theta(x,a) da\, d\mathcal{H}^n(x)\\
    &= \int_0^t \int_M f_s(x) \int_{\sigma^-(x)}^{\sigma^+(x)} \sigma\lambda  (\overline{\mathbb{H}}^\varepsilon)'(a - rf_s(x))\theta(x,a) da\,d\mathcal{H}^n(x)\,dr
\end{align*}
Second, consider (\ref{eqn: energy term}), by the Fundamental Theorem of Calculus, Fubini's Theorem and integrating by parts then, by setting $\theta(x,\sigma^\pm(x)) = \lim_{a \rightarrow \sigma^\pm(x)} \theta(x,a)$ and recalling (\ref{eqn: differential equation for area elements}), we obtain
\begin{align*}
    (\ref{eqn: energy term}) &= \int_M \int_{\sigma^-(x)}^{\sigma^+(x)} \left[e_\varepsilon\left(\overline{\mathbb{H}}^\varepsilon(a - tf_s(x))\right) - e_\varepsilon\left(\overline{\mathbb{H}}^\varepsilon(a)\right) \right]\theta(x,a) da\, d\mathcal{H}^n(x)\\
    &= -\int_0^t\int_M f_s(x) \int_{\sigma^-(x)}^{\sigma^+(x)} e'_\varepsilon\left(\overline{\mathbb{H}}^\varepsilon(a - rf_s(x))\right)\theta(x,a) da\, d\mathcal{H}^n(x)\,dr\\
    &= \int_0^t \int_M f_s(x) \int_{\sigma^-(x)}^{\sigma^+(x)} e_\varepsilon(\overline{\mathbb{H}}^\varepsilon(a - rf_s(x)))\partial_a \theta(x,a) da \, d\mathcal{H}^n(x) \, dr\\
    &\quad\quad -\int_0^t\int_M f_s(x) e_\varepsilon\left(\overline{\mathbb{H}}^\varepsilon(\sigma^+(x) - rf_s(x))\right)\theta(x,\sigma^+(x)) da\, d\mathcal{H}^n(x)\,dr\\
    &\quad\quad\quad +\int_0^t\int_M f_s(x) e_\varepsilon\left(\overline{\mathbb{H}}^\varepsilon(\sigma^-(x) - rf_s(x))\right)\theta(x,\sigma^-(x)) da\, d\mathcal{H}^n(x)\,dr\\
    &= \int_0^t \int_M f_s(x) \int_{\sigma^-(x)}^{\sigma^+(x)} e_\varepsilon(\overline{\mathbb{H}}^\varepsilon(a - rf_s(x)))(\lambda - H(x,a))\theta(x,a) da \, d\mathcal{H}^n(x) \, dr\\
    &\quad - \int_0^t \int_M f_s(x) \int_{\sigma^-(x)}^{\sigma^+(x)} \lambda e_\varepsilon(\overline{\mathbb{H}}^\varepsilon(a - rf_s(x))) \theta(x,a) da \, d\mathcal{H}^n(x) \, dr\\
    &\quad\quad -\int_0^t\int_M f_s(x) e_\varepsilon\left(\overline{\mathbb{H}}^\varepsilon(\sigma^+(x) - rf_s(x))\right)\theta(x,\sigma^+(x)) da\, d\mathcal{H}^n(x)\,dr\\
    &\quad\quad\quad +\int_0^t\int_M f_s(x) e_\varepsilon\left(\overline{\mathbb{H}}^\varepsilon(\sigma^-(x) - rf_s(x))\right)\theta(x,\sigma^-(x)) da\, d\mathcal{H}^n(x)\,dr.
\end{align*}
Note that in the last equality above we added and subtracted the term 
\begin{equation*}
    \int_0^t \int_M f_s(x) \int_{\sigma^-(x)}^{\sigma^+(x)} \lambda e_\varepsilon(\overline{\mathbb{H}}^\varepsilon(a - rf_s(x)))\theta(x,a) da \, d\mathcal{H}^n(x) \, dr
\end{equation*}
in order to introduce the quantity $\lambda - H(x,a)$ to the calculation; thus we consider the term
\begin{equation}\label{eqn: underlying geometry term}
    \int_0^t \int_M f_s(x) \int_{\sigma^-(x)}^{\sigma^+(x)} e_\varepsilon(\overline{\mathbb{H}}^\varepsilon(a - rf_s(x)))(\lambda - H(x,a))\theta(x,a) da \, d\mathcal{H}^n(x) \, dr , 
\end{equation}
which we now control by the assumption of positive Ricci curvature.

\bigskip

By the continuity of the $\sigma^\pm$ on $M$, Subsection \ref{subsec: level sets of distance function} and (\ref{eqn: fixed amount of M outside a ball}), we may fix a compact $L \subset (M \setminus B_4)$, $l > 0$ sufficiently small so that $|\sigma^\pm(x)| > l$ for $x \in L$,
\begin{equation}\label{eqn: fixed area for energy drop}
    \mathcal{H}^n(\{ (x,l) \in V_M \: | \: x \in L\}) > \frac{\mathcal{H}^n(M)}{2},
\end{equation} 
and so that for some $t_0 \leq \min \{t_1,2d(N)\}$ we ensure $\theta(x,a) \geq \theta(x,l)$ for all $a \in [-t_0,t_0]$. Note that as $L \cap B_4 = \emptyset$ we have that $f(x) = 1$ for all $x \in L$.

\bigskip

By (\ref{eqn: geometry along level sets}) (relying on the positive Ricci curvature assumption) and the above paragraph we thus have, for $a \in [-t_0,t_0]$ and $x \in L$ that
\begin{equation*}
\begin{cases}
    (\lambda - H(x,a))\theta(x,a) \leq -m\,a\,\theta(x,l) \text{ if } a \in [0,t_0]\\
    (\lambda - H(x,a))\theta(x,a) \geq -m\,a\,\theta(x,l) \text{ if } a \in [-t_0,0].
\end{cases}
\end{equation*}
With the above inequalities and (\ref{eqn: fixed area for energy drop}) we compute for $t \in [-t_0,t_0]$ it holds that, using (\ref{eqn: 1d solution errors in eps^2}) and the fact that $e_\varepsilon(\overline{\mathbb{H}}^\varepsilon(a - r)) \neq 0$ only for $a \in (r - 2\varepsilon\Lambda_\varepsilon, r + 2\varepsilon\Lambda_\varepsilon)$, we have
\begin{align*}
    (\ref{eqn: underlying geometry term}) &\leq -\mathcal{E}_\varepsilon(\overline{\mathbb{H}}^\varepsilon) \left[\frac{ m\sigma}{2}\mathcal{H}^n(M)t^2 - 2\varepsilon\Lambda_\varepsilon m\sigma\mathcal{H}^n(M)\right]\\
    &\leq -(1 - \beta \varepsilon^2)\left[\frac{ m\sigma}{2}\mathcal{H}^n(M)t^2 - 2\varepsilon\Lambda_\varepsilon m\sigma\mathcal{H}^n(M)\right].
\end{align*}
Third, we focus on (\ref{eqn: gradient term}). Using (\ref{eqn: second bound on area elements}), (\ref{eqn: 1d solution errors in eps^2}), (\ref{eqn: gradient bound for h}) (\ref{eqn: annulus ratio for energy drop}) and noting that $s \in [0,1]$, we see that
\begin{equation*}
    (\ref{eqn: gradient term}) \leq 2\sigma t^2 \left(\frac{\mathcal{H}^n(\mathcal{A})C^2_h e^{\frac{\lambda^2}{2m}}}{r^2}\right)\mathcal{E}_\varepsilon(\overline{\mathbb{H}}^\varepsilon) \leq \frac{m\sigma}{4}\mathcal{H}^n(M)t^2 (1 - \beta \varepsilon^2).
\end{equation*}
We now simplify the terms as computed above and see that for $t \in [-t_0,t_0]$ we have
\begin{align}
    \label{eqn: diffused energy drop term} 2\sigma(\mathcal{F}_{\varepsilon,\lambda}(v^{t,s}_\varepsilon) - \mathcal{F}_{\varepsilon,\lambda}(v_\varepsilon)) &\leq  -(1 - \beta \varepsilon^2)\left[\frac{ m\sigma}{4}\mathcal{H}^n(M)t^2 - 2\varepsilon\Lambda_\varepsilon m\sigma\mathcal{H}^n(M)\right]\\
    \label{eqn: theta error term 1}
    &\quad+ \int_0^t \int_M f_s(x) \int_{\sigma^-(x)}^{\sigma^+(x)} \sigma\lambda  (\overline{\mathbb{H}}^\varepsilon)'(a - rf_s(x))\theta(x,a) da\,d\mathcal{H}^n(x)\,dr\\
    \label{eqn: theta error term 2}
    &\quad - \int_0^t \int_M f_s(x) \int_{\sigma^-(x)}^{\sigma^+(x)} \lambda e_\varepsilon(\overline{\mathbb{H}}^\varepsilon(a - rf_s(x))) \theta(x,a) da \, d\mathcal{H}^n(x) \, dr\\
    \label{eqn: sigma + term}
    &\quad\quad -\int_0^t\int_M f_s(x) e_\varepsilon\left(\overline{\mathbb{H}}^\varepsilon(\sigma^+(x) - rf_s(x))\right)\theta(x,\sigma^+(x)) da\, d\mathcal{H}^n(x)\,dr\\
    \label{eqn: sigma - term}
    &\quad\quad +\int_0^t\int_M f_s(x) e_\varepsilon\left(\overline{\mathbb{H}}^\varepsilon(\sigma^-(x) - rf_s(x))\right)\theta(x,\sigma^-(x)) da\, d\mathcal{H}^n(x)\,dr.
\end{align}
The objective now is to show that the sum of the terms (\ref{eqn: theta error term 1}) to (\ref{eqn: sigma - term}) are errors uniformly small in $\varepsilon$.

\bigskip

First we treat (\ref{eqn: theta error term 1}) and (\ref{eqn: theta error term 2}) together. We define
\begin{equation*}
    m_\varepsilon(rf_s(x),x) = \max_{a \in [rf_s(x) - 2\varepsilon\Lambda_\varepsilon,rf_s(x) + 2\varepsilon\Lambda_\varepsilon]} \theta(x,a) - \min_{a \in [rf_s(x) - 2\varepsilon\Lambda_\varepsilon,rf_s(x) + 2\varepsilon\Lambda_\varepsilon]} \theta(x,a)
\end{equation*}
so that by (\ref{eqn: second bound on area elements}), $0 \leq m_\varepsilon(rf_s(x),x) \leq \max_{a \in\mathbb{R}}\theta(x,a) \leq e^\frac{\lambda^2}{2m}$ and $m_\varepsilon(rf_s(x),x) \rightarrow 0$ as $\varepsilon \rightarrow 0$. Hence, by the Dominated Convergence Theorem we see that
\begin{equation*}
   M_\varepsilon(t) = 2\sigma\lambda \int_M \int_0^t m_\varepsilon(rf_s(x),x)d\mathcal{H}^n(x) \, dr \rightarrow 0 \text{ as } \varepsilon \rightarrow 0.
\end{equation*}
Noting that for $0 < \tilde{\varepsilon} < \varepsilon$ we have that $0 \leq m_{\tilde{\varepsilon}}(rf_s(x),x) \leq m_\varepsilon(rf_s(x),x)$ we may apply Dini's Theorem (as the $M_\varepsilon(t)$ are increasing/decreasing in $\varepsilon$ for positive/negative values of $t \in \mathbb{R}$ respectively) to the functions $M_\varepsilon(t)$ to conclude that 
\begin{equation}\label{eqn: M term uniform null on compact sets}
    M_\varepsilon \rightarrow 0 \text{ uniformly on compact subsets of } \mathbb{R}
\end{equation}
Using the above we compute that for (\ref{eqn: theta error term 1}) and (\ref{eqn: theta error term 2}), by $|f_s|,|\overline{\mathbb{H}}^\varepsilon| \leq 1$, (\ref{eqn: 1d solution errors in eps^2}) and  Fubini's Theorem, we have, for $t \in [-t_0,t_0]$, that 
\begin{align*}
    &\left|\int_0^t \int_M f_s(x) \int_{\sigma^-(x)}^{\sigma^+(x)} \sigma\lambda  (\overline{\mathbb{H}}^\varepsilon)'(a - rf_s(x))\theta(x,a)
   - \lambda e_\varepsilon(\overline{\mathbb{H}}^\varepsilon(a - rf_s(x))) \theta(x,a) da \, d\mathcal{H}^n(x) \, dr \right|\\
   &\leq \int_0^t \int_M \left|\int_{\sigma^-(x)}^{\sigma^+(x)}  \sigma\lambda  (\overline{\mathbb{H}}^\varepsilon)'(a - rf_s(x))\theta(x,a)
   - \lambda e_\varepsilon(\overline{\mathbb{H}}^\varepsilon(a - rf_s(x))) \theta(x,a) da \right| d\mathcal{H}^n(x) \, dr\\
   &\leq 2\sigma\lambda \int_0^t \int_M  m_\varepsilon(rf_s(x),x) d\mathcal{H}^n(x) \, dr +  2\sigma\lambda\mathcal{H}^n(M)e^\frac{\lambda^2}{2m}t_0\beta \varepsilon^2\\
   &\leq \max_{t \in [-2d(N),2d(N)]} M_\varepsilon(t) +  2\sigma\lambda\mathcal{H}^n(M)e^\frac{\lambda^2}{2m}t_0\beta \varepsilon^2.
\end{align*}
So 
\begin{equation}\label{eqn: theta error bound}
    |(\ref{eqn: theta error term 1}) + (\ref{eqn: theta error term 2})| \leq \max_{t \in [-2d(N),2d(N)]} M_\varepsilon(t) +  2\sigma\lambda\mathcal{H}^n(M)e^\frac{\lambda^2}{2m}t_0\beta \varepsilon^2.
\end{equation}
Finally we work on (\ref{eqn: sigma + term}) and (\ref{eqn: sigma - term}). If $t \geq 0$ then as $f_s, \theta$ and $e_\varepsilon$ are non-negative we have $(\ref{eqn: sigma + term}) \leq 0$; similarly, if $t \leq 0$ then we have $(\ref{eqn: sigma - term}) \leq 0$. As $|f_s| \leq 1$ by definition, $\theta \leq e^\frac{\lambda^2}{2m}$ by (\ref{eqn: second bound on area elements}) and $e_\varepsilon(\overline{\mathbb{H}}^\varepsilon(\sigma^\pm(x) -rf_s(x)) \neq 0$ only when $\sigma^\pm(x) \in (rf_s(x) - 2\varepsilon\Lambda_\varepsilon, rf_s(x) + 2\varepsilon\Lambda_\varepsilon)$ by definition of $\overline{\mathbb{H}}^\varepsilon$, we conclude that
\begin{equation}\label{eqn: sigma + error}
    (\ref{eqn: sigma + term}) \leq (2\sigma + \beta \varepsilon^2) t_0 e^\frac{\lambda^2}{2m}\mathcal{H}^n(\{ x \in M \: | \: \sigma^+(x) \leq 2\varepsilon\Lambda_\varepsilon \}
\end{equation}
and
\begin{equation}\label{eqn: sigma - error}
    (\ref{eqn: sigma - term}) \leq (2\sigma + \beta \varepsilon^2) t_0 e^\frac{\lambda^2}{2m}\mathcal{H}^n(\{ x \in M \: | \: \sigma^-(x) \geq -2\varepsilon\Lambda_\varepsilon \}.
\end{equation}
As $\mathcal{H}^n(\{x \in M \: | \: \sigma^\pm(x) = 0 \}) = 0$ by the Dominated Convergence Theorem we conclude that 
\begin{equation}\label{eqn: sigma + error null}
    \mathcal{H}^n(\{ x \in M \: | \: \sigma^+(x) \leq 2\varepsilon\Lambda_\varepsilon \} \rightarrow 0 \text{ as } \varepsilon \rightarrow 0
\end{equation}
and
\begin{equation}\label{eqn: sigma - error null}
    \mathcal{H}^n(\{ x \in M \: | \: \sigma^-(x) \geq -2\varepsilon\Lambda_\varepsilon \} \text{ as } \varepsilon \rightarrow 0.
\end{equation}

\bigskip

Hence we conclude that for $t \in [-t_0,t_0]$
\begin{equation*}
    \mathcal{F}_{\varepsilon,\lambda}(v^{t,s}_\varepsilon) \leq \mathcal{F}_{\varepsilon,\lambda}(v_\varepsilon) - \frac{m\sigma}{4}\mathcal{H}^n(M)t^2 + E(\varepsilon)
\end{equation*}
where by (\ref{eqn: diffused energy drop term}), (\ref{eqn: theta error bound}), (\ref{eqn: sigma + error}) and (\ref{eqn: sigma - error}) we have noting that as $t_0 \leq 2d(N)$ we have
\begin{align*}
    2\sigma E(\varepsilon) &=  \frac{m\sigma}{4}\mathcal{H}^n(M)(2d(N))^2\beta\varepsilon^2 + (1 - \beta \varepsilon^2)\left[ 2\varepsilon\Lambda_\varepsilon m\sigma\mathcal{H}^n(M)\right]\\
    &\quad\quad+ \max_{t \in [-2d(N),2d(N)]} M_\varepsilon(t) +  2\sigma\lambda\mathcal{H}^n(M)e^\frac{\lambda^2}{2m}(2d(N)) \beta \varepsilon^2\\
    &\quad\quad\quad+ (2\sigma + \beta \varepsilon^2) (2d(N)) e^\frac{\lambda^2}{2m}\mathcal{H}^n(\{ x \in M \: | \: \sigma^+(x) \leq 2\varepsilon\Lambda_\varepsilon \}\\
    &\quad\quad\quad\quad+ (2\sigma + \beta \varepsilon^2) (2d(N)) e^\frac{\lambda^2}{2m}\mathcal{H}^n(\{ x \in M \: | \: \sigma^-(x) \geq -2\varepsilon\Lambda_\varepsilon \}
\end{align*}
and so $E(\varepsilon) \rightarrow 0$ as $\varepsilon \rightarrow 0$ by virtue of (\ref{eqn: M term uniform null on compact sets}), (\ref{eqn: sigma + error null}) and (\ref{eqn: sigma - error null}). Setting once and for all 
\begin{equation*}
    \eta = \frac{m\sigma}{8}\mathcal{H}^n(M)t_0^2
\end{equation*}
we conclude, taking $\varepsilon > 0$ sufficiently small so that $E(\varepsilon) < \eta$, the energy bounds as desired.
\end{proof}

We now conclude this subsection by showing that the paths $t \in [-t_1,t_1] \rightarrow v^{t,s}_\varepsilon$ and $s \in [-1,1] \rightarrow v^{t,s}_\varepsilon$ are continuous, for fixed $s \in [0,1]$ and $t \in [-t_1,t_1]$ respectively, in $W^{1,2}(N)$. Recall that the functions $F$, $\Pi_{V_M}$ and $f_s$ are continuous. For each $t \in [-t_1,t_1]$ consider that for $s,\tilde{s} \in [0,1]$ we have, by the Dominated Convergence Theorem (using $|\overline{\mathbb{H}}^\varepsilon| \leq 1$) and (\ref{eqn: shifting functions on cylinder}), that
\begin{align*}
    ||v^{t,s}_\varepsilon - v^{t,\tilde{s}}_\varepsilon ||^2_{L^2(N)} &= \int_{V_M} |\hat{v}^{t,s}_\varepsilon - \hat{v}^{t,\tilde{s}}_\varepsilon|^2 d\mathcal{H}^{n+1}_{F^*g}\\
    &= \int_{V_M} |\overline{\mathbb{H}}^\varepsilon(a - tf_s(F(\Pi_{V_M}(x,a))))- \overline{\mathbb{H}}^\varepsilon(a - tf_{\tilde{s}}(F(\Pi_{V_M}(x,a))))|^2 d\mathcal{H}^{n+1}_{F^*g}(x,a)\\
    &\rightarrow 0 \text{ as } \tilde{s} \rightarrow s.
\end{align*}
Also, we see that by the Dominated Convergence Theorem, (\ref{eqn: shifting functions on cylinder}) and (\ref{eqn: Gauss lemma on shifting functions}) we have
\begin{align*}
    ||\nabla v^{t,s}_\varepsilon - \nabla v^{t,\tilde{s}}_\varepsilon||^2_{L^2(N)} 
    &=  \int_{V_M} |\nabla \hat{v}^{t,s}_\varepsilon - \nabla \hat{v}^{t,\tilde{s}}_\varepsilon|^2 d\mathcal{H}^{n+1}_{F^*g}\\
    &\leq \int_{V_M} \Big((\overline{\mathbb{H}}^{{\varepsilon}})' \big(a - tf_s(F(\Pi_{V_M}(x,a)))\big) \\
    &\quad\quad\quad\quad\quad\quad- (\overline{\mathbb{H}}^{{\varepsilon}})'\big(a - tf_{\tilde{s}}(F(\Pi_{V_M}(x,a)))\big)\Big)^2d\mathcal{H}^{n+1}_{F^*g}(x,a)\\
    &\quad + \int_{V_M} t^2 \big| \nabla\big(f_{\tilde{s}}(F(\Pi_{V_M}(x,a)))\big)(\overline{\mathbb{H}}^{{\varepsilon}})'\big(a - tf_{\tilde{s}}(F(\Pi_{V_M}(x,a)))\big)\\
    &\quad \quad \quad \quad \quad  -\nabla \big(f_{s}(F(\Pi_{V_M}(x,a)))\big)(\overline{\mathbb{H}}^{{\varepsilon}})'\big(a - tf_{s}(F(\Pi_{V_M}(x,a)))\big) \big|^2d\mathcal{H}^{n+1}_{F^*g}(x,a)\\
    &\rightarrow 0 \text{ as } \tilde{s} \rightarrow s,
\end{align*}
(where we also use (\ref{eqn: gradient bound for h}) to ensure we can apply the Dominated Convergence Theorem). Hence, for fixed $t \in [-t_1,t_1]$, the path
\begin{equation}\label{eqn: sliding functions cts path in s}
  s \in [0,1] \rightarrow v^{t,s}_\varepsilon \text{ is continuous in } W^{1,2}(N).
\end{equation}
Analogous arguments to those above show that, for fixed $s \in [0,1]$, the path
\begin{equation}\label{eqn: sliding functions cts path in t}
   t \in [-t_1,t_1] \rightarrow v^{t,s}_\varepsilon \text{ is continuous in } W^{1,2}(N).
\end{equation}

\subsection{Sliding the one-dimensional profile}\label{subsec: sliding the one-dimensional profile on M}

We now define, for $t \in \mathbb{R}$, the functions $v^t_\varepsilon \in W^{1,2}(N)$ by setting
\begin{equation*}
    v^t_\varepsilon(x) = 
\overline{\mathbb{H}}^\varepsilon ( d^\pm_{\overline{M}}(x) - t),
\end{equation*}
and note that, by (\ref{eqn: properties of shifting functions}), for $t \in [-t_1,t_1]$ we have $v^t_\varepsilon = v^{t,1}_\varepsilon$. By the continuity of translations on $L^p$ for $1 \leq p < \infty$ we have that $v^t_\varepsilon \in W^{1,2}(N)$ for all $t \in \mathbb{R}$, the path $t \rightarrow v^t_\varepsilon$ is continuous in $W^{1,2}(N)$ and that $v^0_\varepsilon = v_\varepsilon$. By choosing $\varepsilon > 0$ sufficiently small so that $2 \varepsilon \Lambda_\varepsilon < d(N)$ we ensure that both $v^t_\varepsilon = -1$ everywhere on $N$ for all $t \geq 2d(N)$ and $v^t_\varepsilon = +1$ everywhere on $N$ for all $t \leq -2d(N)$.

\bigskip

We now show that, using the assumption of positive Ricci curvature, the path provided by the functions $v^t_\varepsilon$, along with the energy reducing paths from $-1$ and $+1$ provided by negative gradient flow of the energy to $a_\varepsilon$ and $b_\varepsilon$ respectively, provides a “recovery path" for the value $\mathcal{F}_\lambda(E)$; this path connects $a_\varepsilon$ to $b_\varepsilon$, passing through $v_\varepsilon$, with the maximum value of the energy along this path approximately $\mathcal{F}_\lambda(E)$.

\begin{lem}\label{lem: sliding path upper energy bounds}
For all $t \in [-2d(N),2d(N)]$ we have that
\begin{equation*}
    \mathcal{F}_{\varepsilon,\lambda}(v^t_\varepsilon) \leq \mathcal{F}_{\varepsilon,\lambda}(v_\varepsilon) + E(\varepsilon)
\end{equation*}
where $E(\varepsilon) \rightarrow 0$ as $\varepsilon \rightarrow 0$ is as in Lemma \ref{lem: shifting path upper energy bounds}. Furthermore, we have that
\begin{equation*}
    \mathcal{F}_{\varepsilon,\lambda}(v^{\pm \tilde{t}}_\varepsilon) \leq \mathcal{F}_{\varepsilon,\lambda}(v^{\pm t}_\varepsilon)+ E(\varepsilon),
\end{equation*}
whenever $\tilde{t} \geq t \geq 0$; thus, in particular for $t > t_0$ we have
\begin{equation*}
   \mathcal{F}_{\varepsilon,\lambda}(v^{\pm t}_\varepsilon)  \leq \mathcal{F}_{\varepsilon,\lambda}(v_\varepsilon) - \eta + E(\varepsilon).
\end{equation*}
\end{lem}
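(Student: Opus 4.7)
My plan is to leverage Lemma \ref{lem: shifting path upper energy bounds} at $s=1$ (where $f_s \equiv 1$, so the shifted functions coincide with the sliding functions: $v^t_\varepsilon = v^{t,1}_\varepsilon$ for $|t| \leq t_1$) and then extend to arbitrary $|t| \leq 2d(N)$ via a direct co-area computation that parallels the one carried out there, but is simpler because the absence of the cutoff $f_s$ kills the gradient contribution analogous to (\ref{eqn: gradient term}).

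First I would compute, for every $t \in [-2d(N), 2d(N)]$, via the co-area decomposition on $V_M$ as in Subsection \ref{subsec: one dimensional profile},
\begin{equation*}
2\sigma\mathcal{F}_{\varepsilon,\lambda}(v^t_\varepsilon) = \int_M \int_{\sigma^-(x)}^{\sigma^+(x)} \bigl[e_\varepsilon(\overline{\mathbb{H}}^\varepsilon(a-t)) - \sigma\lambda\, \overline{\mathbb{H}}^\varepsilon(a-t)\bigr]\theta(x,a)\, da\, d\mathcal{H}^n(x).
\end{equation*}
To establish the second assertion, fix $0 \leq t \leq \tilde{t} \leq 2d(N)$, apply the Fundamental Theorem of Calculus in the sliding parameter over $[t, \tilde{t}]$, and integrate by parts in $a$ using the ODE (\ref{eqn: differential equation for area elements}), exactly as in the derivation of (\ref{eqn: diffused energy drop term})--(\ref{eqn: sigma - term}). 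The dominant term becomes
\begin{equation*}
\int_t^{\tilde{t}} \int_M \int_{\sigma^-(x)}^{\sigma^+(x)} e_\varepsilon\bigl(\overline{\mathbb{H}}^\varepsilon(a-r)\bigr)\bigl(\lambda - H(x,a)\bigr)\theta(x,a)\, da\, d\mathcal{H}^n(x)\, dr,
\end{equation*}
and the remaining terms are error contributions of precisely the forms (\ref{eqn: theta error term 1})--(\ref{eqn: sigma - term}) already controlled in Lemma \ref{lem: shifting path upper energy bounds}.

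Next I would invoke the Ricci curvature bound (\ref{eqn: geometry along level sets}): the integrand $e_\varepsilon(\overline{\mathbb{H}}^\varepsilon(a-r))$ is supported on $a \in [r - 2\varepsilon\Lambda_\varepsilon, r + 2\varepsilon\Lambda_\varepsilon]$; since $r \geq t \geq 0$, the effective values of $a$ are nonnegative up to an $\varepsilon$-width discrepancy absorbed into the $\theta$-continuity error (using (\ref{eqn: second bound on area elements})), so $\lambda - H(x,a) \leq 0$ on the effective support. Hence the dominant term is non-positive, yielding $\mathcal{F}_{\varepsilon,\lambda}(v^{\tilde{t}}_\varepsilon) \leq \mathcal{F}_{\varepsilon,\lambda}(v^t_\varepsilon) + E(\varepsilon)$. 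The negative case $\tilde{t} \leq t \leq 0$ is handled symmetrically: now $\lambda - H(x,a) \geq 0$ on the relevant support, but the outer integration runs from the larger to the smaller parameter, again yielding a non-positive contribution. The first bound then follows by taking $t=0$; the final assertion follows by combining the just-established monotonicity between $\pm t_0$ and $\pm t$ with the drop $\mathcal{F}_{\varepsilon,\lambda}(v^{\pm t_0}_\varepsilon) \leq \mathcal{F}_{\varepsilon,\lambda}(v_\varepsilon) - \eta$ supplied by Lemma \ref{lem: shifting path upper energy bounds} at $s = 1$.

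The main technical point I expect to verify carefully is that $E(\varepsilon)$ can be chosen uniformly across the large sliding interval $[-2d(N), 2d(N)]$, rather than only on $[-t_0, t_0]$ as in the shifted setting. A direct inspection shows the bounds (\ref{eqn: theta error bound}), (\ref{eqn: sigma + error}), (\ref{eqn: sigma - error}) depend only on global data ($\mathcal{H}^n(M)$, $e^{\lambda^2/(2m)}$, $d(N)$, $\beta\varepsilon^2$, $\varepsilon\Lambda_\varepsilon$) and on the uniform-on-compacts convergence (\ref{eqn: M term uniform null on compact sets}) supplied by Dini's theorem, all of which are valid on the full interval. Thus the same $E(\varepsilon) \to 0$ as $\varepsilon \to 0$ as in Lemma \ref{lem: shifting path upper energy bounds} serves here without essential modification.
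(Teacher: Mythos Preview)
Your proposal is correct and follows essentially the same route as the paper: the same co-area expression for $\mathcal{F}_{\varepsilon,\lambda}(v^t_\varepsilon)$, the same Fundamental Theorem of Calculus plus integration by parts to isolate the $(\lambda - H)$ term, the same reduction of the remaining pieces to the error estimates (\ref{eqn: theta error bound}), (\ref{eqn: sigma + error}), (\ref{eqn: sigma - error}) already established in Lemma \ref{lem: shifting path upper energy bounds}, and the same invocation of that lemma at $s=1$ for the final $-\eta$ drop. Your treatment of the sign of the dominant term via the support of $e_\varepsilon(\overline{\mathbb{H}}^\varepsilon(a-r))$ is in fact slightly more precise than the paper's phrasing, and your explicit check that $E(\varepsilon)$ is uniform over $[-2d(N),2d(N)]$ matches exactly what the paper asserts in its closing sentence.
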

\begin{proof}
We compute in an identical manner to the proof of Lemma \ref{lem: shifting path upper energy bounds}, writing
\begin{align*}
    2\sigma \mathcal{F}_{\varepsilon,\lambda}(v^t_\varepsilon) = \int_M \int_{\sigma^-(x)}^{\sigma^+(x)} \left[e_\varepsilon\left(\overline{\mathbb{H}}^\varepsilon(a - t)\right) - \sigma\lambda\overline{\mathbb{H}}^\varepsilon(a - t) \right]\theta(x,a) da\, d\mathcal{H}^n(x).
\end{align*}
Assuming that either $\tilde{t} \geq t > 0$ or $\tilde{t} \leq t < 0$, as before we compute
\begin{align*}
    2\sigma(\mathcal{F}_{\varepsilon,\lambda}(v^{\tilde{t}}_\varepsilon) - \mathcal{F}_{\varepsilon,\lambda}(v^t_\varepsilon)) &= \int_M \int_{\sigma^-(x)}^{\sigma^+(x)} \left[e_\varepsilon\left(\overline{\mathbb{H}}^\varepsilon(a - \tilde{t})\right) - e_\varepsilon\left(\overline{\mathbb{H}}^\varepsilon(a - t)\right) \right]\theta(x,a) da\, d\mathcal{H}^n(x)\\
    \label{eqn: 1d term}
    &\quad -\int_M \int_{\sigma^-(x)}^{\sigma^+(x)} \sigma\lambda\left[\overline{\mathbb{H}}^\varepsilon(a - \tilde{t}) - \overline{\mathbb{H}}^\varepsilon(a - t) \right]\theta(x,a) da\, d\mathcal{H}^n(x)
\end{align*}
which yields, noting that $\lambda - H(x,a) \leq 0$ for all $a \in \mathbb{R}$ by the assumption of positive Ricci curvature, the expression
\begin{align*}
     2\sigma(\mathcal{F}_{\varepsilon,\lambda}(v^{\tilde{t}}_\varepsilon) - \mathcal{F}_{\varepsilon,\lambda}(v^t_\varepsilon)) &\leq  \int_t^{\tilde{t}} \int_M \int_{\sigma^-(x)}^{\sigma^+(x)} \sigma\lambda  (\overline{\mathbb{H}}^\varepsilon)'(a - r)\theta(x,a) da\,d\mathcal{H}^n(x)\,dr\\
     &\quad - \int_t^{\tilde{t}} \int_M \int_{\sigma^-(x)}^{\sigma^+(x)} \lambda e_\varepsilon(\overline{\mathbb{H}}^\varepsilon(a - r)) \theta(x,a) da \, d\mathcal{H}^n(x) \, dr\\
     &\quad\quad -\int_t^{\tilde{t}} \int_M e_\varepsilon\left(\overline{\mathbb{H}}^\varepsilon(\sigma^+(x) - r)\right)\theta(x,\sigma^+(x)) da\, d\mathcal{H}^n(x)\,dr\\
     &\quad\quad\quad +\int_t^{\tilde{t}} \int_M e_\varepsilon\left(\overline{\mathbb{H}}^\varepsilon(\sigma^-(x) - r)\right)\theta(x,\sigma^-(x)) da\, d\mathcal{H}^n(x)\,dr.
\end{align*}
Near identical computations to those in the proof of Lemma \ref{lem: shifting path upper energy bounds} (for the error terms (\ref{eqn: theta error term 1}), (\ref{eqn: theta error term 2}) giving the bound (\ref{eqn: theta error bound}), as well the bounds on (\ref{eqn: sigma + error}), (\ref{eqn: sigma - error}) giving (\ref{eqn: sigma + error null}), (\ref{eqn: sigma - error null}) respectively) for the above four terms give that 
\begin{equation*}
    \mathcal{F}_{\varepsilon,\lambda}(v^{\tilde{t}}_\varepsilon) \leq \mathcal{F}_{\varepsilon,\lambda}(v^t_\varepsilon) + E(\varepsilon),
\end{equation*}
where the expression for $E(\varepsilon)$ is identical to that of Lemma \ref{lem: shifting path upper energy bounds} (where here are exploiting the fact that $t \in [-2d(N),2d(N)]$).

\bigskip

Recalling $v^0_\varepsilon = v_\varepsilon$, $v^{\pm t_0}_\varepsilon = v^{\pm t_0, 1}_\varepsilon$ and the bounds from Lemma \ref{lem: shifting path upper energy bounds} then completes the proof.
\end{proof}

\subsection{Paths to local energy minimisers}\label{subsec: local path}

Recall, $v_\varepsilon = \overline{\mathbb{H}}^\varepsilon 
\circ d^\pm_{\overline{M}}$ and that, by Lemma \ref{lem: shifting path upper energy bounds} and (\ref{eqn: properties of shifting functions}), for a fixed $\eta > 0$ and some $r_0 > 0$, as chosen in Subsection \ref{subsec: choosing radii}, there exists a $t_0 > 0$ and functions $v^{t,0}_\varepsilon \in W^{1,2}(N)$ for $t \in [-t_0,t_0]$, with the following properties:
\begin{itemize}
    \item $v^{t,0}_\varepsilon = v_\varepsilon$ in $B_{r_0}(p)$.

    \item $\mathcal{F}_{\varepsilon,\lambda}(v^{t,0}_\varepsilon) \leq \mathcal{F}_{\varepsilon,\lambda}(v_\varepsilon) + E(\varepsilon)$, where $E(\varepsilon) \rightarrow 0$ as $\varepsilon \rightarrow 0$.

    \item $\mathcal{F}_{\varepsilon,\lambda}(v^{\pm t_0,0}_\varepsilon) \leq \mathcal{F}_{\varepsilon,\lambda}(v_\varepsilon) - \eta$.
\end{itemize}
We now produce the local functions described in Step 3 of Subsection \ref{subsec: strategy}. These functions are constant outside of $B_{r_0}(p)$, providing a continuous path in $W^{1,2}(N)$ from $v_\varepsilon$ to $g_\varepsilon$ such that the maximum energy along this path is bounded from above by $\mathcal{F}_{\varepsilon,\lambda}(v^{t,0}_\varepsilon) + \frac{\eta}{2}$.

\begin{prop}\label{prop: local path}
For $\eta > 0$ as above there exists, for some $R \in (0,r_0)$ and $\varepsilon > 0$ sufficiently small, functions $g^{t,s}_\varepsilon \in W^{1,2}(N)$, for each $t \in [-t_0,t_0]$ and $s \in [-2,2]$, such that the following properties hold:
\begin{itemize}
    \item For each $t \in [-t_0,t_0]$ we have $g^{t,-2}_\varepsilon = v^{t,0}_\varepsilon$ on $N$ and $g^{t,s}_\varepsilon = v^{t,0}_\varepsilon$ on $N \setminus B_R(p)$ for all $s \in [-2,2]$.
    
    \item For each $t \in [-t_0,t_0]$ we have $g^{t,2}_\varepsilon = g_\varepsilon$ in $B_R(p)$ where $g_\varepsilon \in \mathcal{A}_{\varepsilon,\frac{R}{2}}(p)$ arises from Lemma \ref{lem: function minimisation} for $\rho = \frac{R}{2}$ and $q = p$.
 
    \item For each $s \in [-2,2]$, $t \in [-t_0,t_0] \rightarrow g^{t,s}_\varepsilon$ is a continuous path in $W^{1,2}(N)$.

    \item For each $t \in [-t_0,t_0]$, $s \in [-2,2] \rightarrow g^{t,s}_\varepsilon$ is a continuous path in $W^{1,2}(N)$.
    
    \item For each $t \in [-t_0,t_0]$ and $s \in [-2,2]$ we have 
    \begin{equation*}
        \mathcal{F}_{\varepsilon,\lambda}(g^{t,s}_\varepsilon) \leq \mathcal{F}_{\varepsilon,\lambda}(v^{t,0}_\varepsilon) + \frac{\eta}{2}.
    \end{equation*}
\end{itemize}
Furthermore, if the functions $v_\varepsilon$ are such that $\mathcal{F}_{\varepsilon,\lambda}(v_\varepsilon) \geq \mathcal{F}_{\varepsilon,\lambda}(g_\varepsilon) + \tau$ for some $\tau > 0$, we have for each $t \in [-t_0,t_0]$ that 
\begin{equation*}
    \mathcal{F}_{\varepsilon,\lambda}(g^{t,2}_\varepsilon) \leq \mathcal{F}_{\varepsilon,\lambda}(v^{t,0}_\varepsilon) - \tau.
\end{equation*}
\end{prop}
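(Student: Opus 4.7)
The approach is to build the functions $g^{t,s}_\varepsilon$ so that in $B_R(p)$ they transition in two phases: Phase~1 ($s \in [-2, 0]$) from $v_\varepsilon$ to $\min\{v_\varepsilon, g_\varepsilon\}$, and Phase~2 ($s \in [0, 2]$) from $\min\{v_\varepsilon, g_\varepsilon\}$ to $g_\varepsilon$, both driven by a diffuse analogue of a planar sweep-out of $B_R(p)$ taken in geodesic normal coordinates. I would first fix $R \in (0, r_0)$ using the Euclidean volume growth (\ref{eqn: monotonicity gives euclidean volume growth for M around p}) of $M$ at $p$ (which holds by the monotonicity formula since $M$ has bounded mean curvature $\lambda$), together with the analogous Euclidean volume growth of affine hyperplanes in the $2$-bi-Lipschitz ball $\phi(B_R(p)) = B^{\mathbb{R}^{n+1}}_R(0)$, ensuring that for all $\varepsilon > 0$ sufficiently small both $\mathcal{E}_\varepsilon|_{B_R(p)}(v_\varepsilon)$ and $\sup_\beta \mathcal{E}_\varepsilon|_{B_R(p)}(w^\beta_\varepsilon)$ (defined below), as well as $|\lambda|\mathrm{Vol}_g(B_R(p))$, are each bounded by a small fraction of $\eta$. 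Next I would apply Lemma~\ref{lem: function minimisation} with $\rho = R/2$ and $q = p$ to obtain $g_\varepsilon \in \mathcal{A}_{\varepsilon, R/2}(p)$ with $|g_\varepsilon| \leq 1$ and $g_\varepsilon = v_\varepsilon$ on $N \setminus B_{R/2}(p)$. Finally, I would define for $\beta \in [0, 1]$ the Lipschitz diffuse sweep-out $w^\beta_\varepsilon$ on $B_R(p)$ by composing $\overline{\mathbb{H}}^\varepsilon$ with an affine function of $\phi(\cdot)_{n+1}$, chosen so that $w^0_\varepsilon \equiv +1$, $w^1_\varepsilon \equiv -1$ throughout $B_R(p)$, and $\beta \mapsto w^\beta_\varepsilon$ is continuous in $W^{1,2}(B_R(p))$.

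I would then set $g^{t,s}_\varepsilon = v^{t,0}_\varepsilon$ on $N \setminus B_R(p)$, and on $B_R(p)$ define
\begin{equation*}
g^{t,s}_\varepsilon = \begin{cases}
\max\bigl\{\min\{v_\varepsilon, w^{(s+2)/2}_\varepsilon\},\, \min\{v_\varepsilon, g_\varepsilon\}\bigr\}, & s \in [-2, 0],\\[3pt]
\max\bigl\{\min\{v_\varepsilon, g_\varepsilon\},\, \min\{w^{1-s/2}_\varepsilon, g_\varepsilon\}\bigr\}, & s \in [0, 2].
\end{cases}
\end{equation*}
Direct evaluation with $w^0_\varepsilon \equiv +1$ and $w^1_\varepsilon \equiv -1$ yields $g^{t,-2}_\varepsilon = v_\varepsilon = v^{t,0}_\varepsilon$ in $B_R(p)$, $g^{t,2}_\varepsilon = g_\varepsilon$ in $B_R(p)$, and both formulas coincide with $\min\{v_\varepsilon, g_\varepsilon\}$ at $s = 0$, matching Phase~1 to Phase~2. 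Since $v^{t,0}_\varepsilon = v_\varepsilon$ on $B_{r_0}(p) \supset B_R(p)$ and $g_\varepsilon = v_\varepsilon$ on $B_R(p) \setminus B_{R/2}(p)$, the two pieces of the definition agree on a neighbourhood of $\partial B_R(p)$, so $g^{t,s}_\varepsilon \in W^{1,2}(N)$. The continuity of $s \mapsto g^{t,s}_\varepsilon$ and of $t \mapsto g^{t,s}_\varepsilon$ in $W^{1,2}(N)$ then follows from the $W^{1,2}$-continuity of pointwise max/min on $L^\infty$-bounded sequences, the continuity of $\beta \mapsto w^\beta_\varepsilon$, and the continuity of $t \mapsto v^{t,0}_\varepsilon$ established at the end of Subsection~\ref{subsec: energy calculations for shifted functions}.

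The main obstacle is the uniform upper energy bound $\mathcal{F}_{\varepsilon,\lambda}(g^{t,s}_\varepsilon) \leq \mathcal{F}_{\varepsilon,\lambda}(v^{t,0}_\varepsilon) + \eta/2$. The key observation is that a pointwise case analysis of the formula above shows that at every $x \in B_R(p)$, the value $g^{t,s}_\varepsilon(x)$ coincides with one of $v_\varepsilon(x), g_\varepsilon(x), w^\beta_\varepsilon(x), \min\{v_\varepsilon, g_\varepsilon\}(x)$ (with $\beta = \beta(s)$), so that pointwise a.e.
\begin{equation*}
e_\varepsilon(g^{t,s}_\varepsilon) \leq e_\varepsilon(v_\varepsilon) + e_\varepsilon(g_\varepsilon) + e_\varepsilon(w^\beta_\varepsilon) + e_\varepsilon(\min\{v_\varepsilon, g_\varepsilon\}).
\end{equation*}
By Lemma~\ref{lem: function minimisation} we have $\mathcal{F}_{\varepsilon,\lambda}(g_\varepsilon) \leq \mathcal{F}_{\varepsilon,\lambda}(v_\varepsilon)$, and combining this with the pointwise identity $e_\varepsilon(\max\{u,v\}) + e_\varepsilon(\min\{u,v\}) = e_\varepsilon(u) + e_\varepsilon(v)$ applied to $(v_\varepsilon, g_\varepsilon)$ also yields $\mathcal{F}_{\varepsilon,\lambda}(\min\{v_\varepsilon, g_\varepsilon\}) \leq \mathcal{F}_{\varepsilon,\lambda}(v_\varepsilon)$. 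Localising these two inequalities to $B_R(p)$ via $v_\varepsilon = g_\varepsilon$ on $N \setminus B_{R/2}(p)$ bounds $\int_{B_R(p)} e_\varepsilon(g_\varepsilon)$ and $\int_{B_R(p)} e_\varepsilon(\min\{v_\varepsilon, g_\varepsilon\})$ by $\int_{B_R(p)} e_\varepsilon(v_\varepsilon)$ up to corrections of order $|\lambda|\mathrm{Vol}_g(B_R(p))$. Combined with the coarea/monotonicity bound on $\int_{B_R(p)} e_\varepsilon(v_\varepsilon)$ and the analogous bound on $\int_{B_R(p)} e_\varepsilon(w^\beta_\varepsilon)$, the initial choice of $R$ renders the total excess at most $\eta/2$. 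Finally, under the contradiction hypothesis $\mathcal{F}_{\varepsilon,\lambda}(v_\varepsilon) \geq \mathcal{F}_{\varepsilon,\lambda}(g_\varepsilon) + \tau$, since $g^{t,2}_\varepsilon = g_\varepsilon$ and $v^{t,0}_\varepsilon = v_\varepsilon$ on $B_R(p)$ while $g^{t,2}_\varepsilon = v^{t,0}_\varepsilon$ on $N \setminus B_R(p)$, one computes $\mathcal{F}_{\varepsilon,\lambda}(g^{t,2}_\varepsilon) - \mathcal{F}_{\varepsilon,\lambda}(v^{t,0}_\varepsilon) = \mathcal{F}_{\varepsilon,\lambda}(g_\varepsilon) - \mathcal{F}_{\varepsilon,\lambda}(v_\varepsilon) \leq -\tau$ as required.
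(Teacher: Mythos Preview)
Your proposal is correct and follows essentially the same approach as the paper: both build the local path as a two-phase transition $v_\varepsilon \to \min\{v_\varepsilon,g_\varepsilon\} \to g_\varepsilon$ in $B_R(p)$ using nested max/min with a diffuse planar sweep-out, and both obtain the energy bound by noting that $g^{t,s}_\varepsilon$ pointwise coincides with one of the building-block functions and then controlling each contribution via the choice of $R$. The only minor differences are cosmetic---your sweep-out uses $\overline{\mathbb{H}}^\varepsilon$ composed with an affine chart coordinate rather than the signed distance to $P_l$, and you carry the extra (redundant) term $e_\varepsilon(\min\{v_\varepsilon,g_\varepsilon\})$ in the pointwise bound, handling it via the identity $e_\varepsilon(\max)+e_\varepsilon(\min)=e_\varepsilon(u)+e_\varepsilon(v)$, whereas the paper simply observes that the value at each point is already one of $v_\varepsilon$, $g_\varepsilon$, or the sweep-out function.
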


\begin{rem}
The paths exhibited in Proposition \ref{prop: local path} are “local" in the sense that they remain constant outside of $B_R(p)$, and additionally do not require the assumption of positive Ricci curvature of $N$. Specifically, in any ambient manifold (without any curvature assumption), given an $\eta > 0$ (not necessarily as fixed by Lemma \ref{lem: shifting path upper energy bounds}) and $f \in W^{1,2}(N)$ such that $|f| \leq 1$ and $f$ equal to $v_\varepsilon$ in a ball bi-Lipschitz diffeomorphic to the Euclidean ball of the same radius, a path as in Proposition \ref{prop: local path} may be constructed (replacing $v^{t,0}_\varepsilon$ by $f$ in the conclusions) for a sufficiently small $R > 0$.
\end{rem}

\begin{proof}
Given some $\eta > 0$ we choose $R > 0$ sufficiently small to ensure that
\begin{equation*}
     2^{n+2}R^n\omega_n + C_pR^n + 2\lambda \mathrm{Vol}(B_{\frac{R}{2}}(p)) < \frac{\eta}{4},
\end{equation*}
where $\omega_n$ is the volume of the $n$-dimensional unit ball. We then potentially re-choose a smaller $0 < R \leq \min\{r_0, r_p\}$, such that the closed ball $\overline{B}_R(p)$ is $2$-bi-Lipschitz diffeomorphic, via some smooth chart $\psi$ on $N$, to the Euclidean ball, $\overline{B}^{\mathbb{R}^{n+1}}_R(0)$ of radius $R$, with $\psi(p) = 0$. Consider a sweep-out of $\overline{B}^{\mathbb{R}^{n+1}}_R$ by planes $\Pi_l$ defined for $l \in [-1,1]$ by
\begin{equation*}
    \Pi_l = \left\{y \in \overline{B}^{\mathbb{R}^{n+1}}_R \: \Big| \: y = (y_1, \dots, y_n, lR) \right\},
\end{equation*}
which for each $l \in [-1,1]$ is the intersection of $\overline{B}^{\mathbb{R}^{n+1}}_R$ with the plane $\{ y_{n+1} = lR \}$. We consider the images, $P_l = \psi(\Pi_l)$ in $N$. The $P_l$ then sweep-out the closure of $B_R(p)$, $\overline{B}_R(p)$ in the sense that $\overline{B}_R(p) = \cup_{l \in [-1,1]} P_l$ and that for $l \neq s$ we have $P_l \cap P_s = \emptyset$. Each $\Pi_l$ divides $\overline{B}^{\mathbb{R}^{n+1}}_R$ into two connected regions $\{ y \in \overline{B}^{\mathbb{R}^{n+1}}_R \: | \: y_{n+1} > lR \}$ and $\{ y \in \overline{B}^{\mathbb{R}^{n+1}}_R \: | \: y_{n+1} < lR \}$; we denote the images of these sets under the diffeomorphism $\psi$ as $E_l = \psi(\{ y \in \overline{B}^{\mathbb{R}^{n+1}}_R \: | \: y_{n+1} > lR \})$ and $F_l = \psi(\{ y \in \overline{B}^{\mathbb{R}^{n+1}}_R \: | \: y_{n+1} < lR \})$ in $\overline{B}_R(p)$ respectively. Note then that $\overline{B}_R(p) = E_l \cup F_l \cup P_l$, where the unions are all mutually disjoint.

\bigskip

We now define for $l \in [-1,1]$ functions $p^l_\varepsilon \in W^{1,2}(B_R(p))$ given by
\begin{equation*}
    p^l_\varepsilon = \overline{\mathbb{H}}^\varepsilon(d^\pm_{P_l}),
\end{equation*}
where here, for $d_{P_l}$ the usual distance function on $N$ from the set $P_l$, we define the Lipschitz signed distance to $P_l$ by
\begin{equation*}
    d^\pm_{P_l} =
    \begin{cases}
    +d_{P_l}, \text{ if } x \in E_l\\
    0, \text{ if } x \in P_l\\
    -d_{P_l}, \text{ if } x \in F_l.
    \end{cases}
\end{equation*}
As the sets $P_l$ vary continuously in the Haussdorff distance as $l \in [-1,1]$ varies, the functions $p^l_\varepsilon$ vary continuously in $W^{1,2}(B_R(p))$ with respect to $l \in [-1,1]$. 

\bigskip

Apply Lemma \ref{lem: function minimisation} for the choice of $B_\frac{R}{2}(p)$ for each $0 < \varepsilon < \varepsilon_2$ as in the minimisation section. For $l \in [-1,1]$ we define on $B_R(p)$ the functions
\begin{equation*}
    \check{h}^l_\varepsilon(x) = \max\Big\{\min\{v_\varepsilon(x), p^l_\varepsilon(x)\},  \min\{g_\varepsilon(x),v_\varepsilon(x)\} \Big\},
\end{equation*}
and 
\begin{equation*}
    \hat{h}^l_\varepsilon(x) = \max\Big\{\min\{g_\varepsilon(x), p^l_\varepsilon(x)\}, \min\{g_\varepsilon(x),v_\varepsilon(x)\} \Big\}.
\end{equation*}
Note then that for $2\varepsilon\Lambda_\varepsilon < \frac{R}{2}$ we have $\check{h}^{-1}_\varepsilon = v_\varepsilon$, $\check{h}^1_\varepsilon = \hat{h}^1_\varepsilon = \min\{g_\varepsilon,v_\varepsilon\}$, $\hat{h}^{-1}_\varepsilon = g_\varepsilon$, and both $|\check{h}^l_\varepsilon| \leq 1$, $|\hat{h}^l_\varepsilon| \leq 1$.

\bigskip

For functions $f,g \in W^{1,2}(B_R(p))$ we may write $\max \{f,g\} = \max\{f - g, 0\} + g = \frac{1}{2}(|f-g| +(f-g)) + g$ and $\min \{f,g\} = \min\{f - g, 0\} + g = \frac{1}{2}(|f-g| - (f-g)) + g$; thus $\max \{f,g\}, \min \{f,g\} \in W^{1,2}(N)$. We also note that if $t \rightarrow f_t$ is a continuous path in $W^{1,2}(B_R(p))$, then the paths $t \rightarrow \max\{f_t,0\}$ and $t \rightarrow \min\{f_t,0\}$ are continuous in $W^{1,2}(B_R(p))$. To see this we write $\max\{f_t,0\} - \max\{f_s,0\} = \frac{1}{2}((|f_t|-|f_s|) + (f_t - f_s))$ which by reverse triangle inequality may be controlled by the $W^{1,2}(B_R(p))$ norm of $f_t - f_s$, showing continuity of the path in $W^{1,2}(B_R(p))$. The proof for the continuity of the path $t \rightarrow \min\{f_t,0\}$ is identical. With the above in mind, we have that $\check{h}_\varepsilon^l, \hat{h}^l_\varepsilon \in W^{1,2}(B_R(p))$ for each $l \in [-1,1]$ and the paths $l \in [-1,1] \rightarrow \check{h}^l_\varepsilon$ and $l \in [-1,1] \rightarrow \hat{h}^l_\varepsilon$ are continuous in $W^{1,2}(B_R(p))$.

\bigskip

We have that $v_\varepsilon = g_\varepsilon$ on $B_R(p) \setminus \overline{B_\frac{R}{2}(p)}$ (as $v_\varepsilon = g_\varepsilon$ on $N \setminus B_\frac{R}{2}(p)$) and hence $\hat{h}^l_\varepsilon = \check{h}^l_\varepsilon = v_\varepsilon$ on $B_R(p) \setminus \overline{B_\frac{R}{2}(p)}$ also. Thus we may extend to well defined functions $\check{g}^{t,l}_\varepsilon, \hat{g}^{t,l}_\varepsilon \in W^{1,2}(N)$ for $t \in [-t_0,t_0]$ and $l \in [-1,1]$ by defining
\begin{equation*}
    \check{g}^{t,l}_\varepsilon(x) = 
    \begin{cases}
    v^{t,0}_\varepsilon(x), \text{ if } x \in N \setminus B_R(p)\\
    \check{h}^l_\varepsilon(x), \text{ if } x \in B_R(p),
    \end{cases}
\end{equation*}
and
\begin{equation*}
    \hat{g}^{t,l}_\varepsilon(x) = 
    \begin{cases}
    v^{t,0}_\varepsilon(x), \text{ if } x \in N \setminus B_R(p)\\
    \hat{h}^l_\varepsilon(x), \text{ if } x \in B_R(p),
    \end{cases}
\end{equation*}
This is well defined as we have that $v^{t,0}_\varepsilon = v_\varepsilon$ on the set $B_R(p)$, hence in defining the functions above we only edit the functions $v_\varepsilon^{t,0}$ in $B_R(p)$ and keep them in $W^{1,2}(N)$. By the above arguments, as the paths $l \in [-1,1] \rightarrow \check{h}^l_\varepsilon$ and $l \in [-1,1] \rightarrow \hat{h}^l_\varepsilon$ are continuous in $W^{1,2}(B_R(p))$ and so we have that the paths $t \in [-t_0,t_0] \rightarrow \check{g}^{t,l}_\varepsilon$, $l \in [-1,1] \rightarrow \check{g}^{t,l}_\varepsilon$, $t \in [-t_0,t_0] \rightarrow \hat{g}^{t,l}_\varepsilon$ and $l \in [-1,1] \rightarrow \hat{g}^{t,l}_\varepsilon$ are continuous in $W^{1,2}(N)$. 

\bigskip

Note that then as $\check{h}^1_\varepsilon = \hat{h}^1_\varepsilon = \min\{g_\varepsilon,v_\varepsilon\}$ we have for each $t \in [-t_0,t_0]$ that $\check{g}^{t,1}_\varepsilon = \hat{g}^{t,-1}_\varepsilon = \min\{g_\varepsilon,v_\varepsilon\}$ in $B_R(p)$. We thus define, for $s \in [-2,2]$, the functions
\begin{equation*}
    g^{t,s}_\varepsilon = 
    \begin{cases}
    \check{g}^{t,s+1}, \text{ if } s \in [-2,0]\\
    \hat{g}^{t,s-1}_\varepsilon, \text{ if } s \in [0,2],
    \end{cases}
\end{equation*}
so that $g^{t,0}_\varepsilon = \check{g}^{t,1}_\varepsilon = \hat{g}^{t,-1}_\varepsilon = \min\{g_\varepsilon,v_\varepsilon\}$, $g^{t,-2}_\varepsilon = \check{g}^{0,-1}_\varepsilon = v^{t,0}_\varepsilon$ and  $g^{0,2}_\varepsilon = \hat{g}^{0,1}_\varepsilon = g_\varepsilon$ in $B_R(p)$. By the continuity of the paths mentioned above we have that the paths, $s \in [-2,2] \rightarrow g^{t,s}_\varepsilon$ and $t \in [-t_0,t_0] \rightarrow g^{t,s}_\varepsilon$, are continuous in $W^{1,2}(N)$.

\bigskip

We now show that we may bound the energy of all the functions $g^{t,s}_\varepsilon$ above by $\mathcal{F}_{\varepsilon,\lambda}(v^{t,0}_\varepsilon)$ plus errors depending only on the geometry of the ball $B_R(p)$. As $\{ g_\varepsilon \neq v_\varepsilon \} \subset B_{\frac{R}{2}}(p)$ we note that $g^{t,s}_\varepsilon = v^{t,0}_\varepsilon$ on $N \setminus B_{\frac{R}{2}}(p)$. We then compute that, as $\check{h}^{s+1}_\varepsilon$ is always equal to one of $g_\varepsilon,v_\varepsilon$ or $p_\varepsilon^{s+1}$ in $B_{\frac{R}{2}}(p)$, for $s \in [-2,0]$ we have
\begin{align*}
   \mathcal{F}_{\varepsilon,\lambda}(g^{t,s}_\varepsilon) &= \frac{1}{2\sigma}\int_N e_\varepsilon(g^{t,s}_\varepsilon) - \frac{\lambda}{2}\int_N g^{t,s}_\varepsilon = \frac{1}{2\sigma}\int_{N \setminus B_{\frac{R}{2}}(p)} e_\varepsilon(v^{t,0}_\varepsilon) + \frac{1}{2\sigma}\int_{B_{\frac{R}{2}}(p)} e_\varepsilon(\check{h}^{s+1}_\varepsilon)\\
   &- \frac{\lambda}{2}\int_{N \setminus B_{\frac{R}{2}}(p)} v^{t,0}_\varepsilon  - \frac{\lambda}{2}\int_{B_{\frac{R}{2}}(p)} \check{h}^{s+1}_\varepsilon\\
    &= \frac{1}{2\sigma}\int_{N \setminus B_{\frac{R}{2}}(p)} e_\varepsilon(v^{t,0}_\varepsilon) + \frac{1}{2\sigma}\int_{B_{\frac{R}{2}}(p) \cap \{\check{h}^{s+1}_\varepsilon = v_\varepsilon\}} e_\varepsilon(v_\varepsilon) + \frac{1}{2\sigma}\int_{B_{\frac{R}{2}}(p) \cap \{\check{h}^{s+1}_\varepsilon = g_\varepsilon\}} e_\varepsilon(g_\varepsilon)  \\
    &+ \frac{1}{2\sigma}\int_{B_{\frac{R}{2}}(p) \cap \{\check{h}^{s+1}_\varepsilon = p^{s+1}_\varepsilon\}} e_\varepsilon(p^{s+1}_\varepsilon) -\frac{\lambda}{2}\int_{N \setminus B_{\frac{R}{2}}(p)} v^{t,0}_\varepsilon  - \frac{\lambda}{2}\int_{B_{\frac{R}{2}}(p)} \check{h}^{s+1}_\varepsilon\\
    &\leq \frac{1}{2\sigma}\int_{N \setminus B_{\frac{R}{2}}(p)} e_\varepsilon(v^{t,0}_\varepsilon) + \frac{1}{2\sigma}\int_{B_{\frac{R}{2}}(p)} e_\varepsilon(v_\varepsilon)  + \frac{1}{2\sigma}\int_{B_{\frac{R}{2}}(p)} e_\varepsilon(g_\varepsilon) + \frac{1}{2\sigma}\int_{B_{\frac{R}{2}}(p)} e_\varepsilon(p^{s+1}_\varepsilon)\\
    &-\frac{\lambda}{2}\int_{N \setminus B_{\frac{R}{2}}(p)} v^{t,0}_\varepsilon  - \frac{\lambda}{2}\int_{B_{\frac{R}{2}}(p)} \check{h}^{s+1}_\varepsilon\\
    &= \mathcal{E}_\varepsilon(v^{t,0}_\varepsilon) + \frac{1}{2\sigma}\int_{B_{\frac{R}{2}}(p)} e_\varepsilon(g_\varepsilon) + \frac{1}{2\sigma}\int_{B_{\frac{R}{2}}(p)} e_\varepsilon(p^{s+1}_\varepsilon) -\frac{\lambda}{2}\int_{N \setminus B_{\frac{R}{2}}(p)} v^{t,0}_\varepsilon  - \frac{\lambda}{2}\int_{B_{\frac{R}{2}}(p)} \check{h}^{s+1}_\varepsilon\\
    &= \mathcal{F}_{\varepsilon,\lambda}(v^{t,0}_\varepsilon) + \frac{1}{2\sigma}\int_{B_{\frac{R}{2}}(p)} e_\varepsilon(g_\varepsilon) + \frac{1}{2\sigma}\int_{B_{\frac{R}{2}}(p)} e_\varepsilon(p^{s+1}_\varepsilon) +\frac{\lambda}{2}\int_{B_{\frac{R}{2}}(p)} (v^{t,0}_\varepsilon  - \check{h}^{s+1}_\varepsilon),
\end{align*}
where in the final line we have added and subtracted the term $\frac{\lambda}{2}\int_{B_{\frac{R}{2}}(p)}v^{t,0}_\varepsilon$. Similarly for $s \in [0,2]$ we have
\begin{equation*}
    \mathcal{F}_{\varepsilon,\lambda}(g^{t,s}_\varepsilon) \leq  \mathcal{F}_{\varepsilon,\lambda}(v^{t,0}_\varepsilon) + \frac{1}{2\sigma}\int_{B_{\frac{R}{2}}(p)} e_\varepsilon(g_\varepsilon) + \frac{1}{2\sigma}\int_{B_{\frac{R}{2}}(p)} e_\varepsilon(p^{s-1}_\varepsilon)+\frac{\lambda}{2}\int_{B_{\frac{R}{2}}(p)} (v^{t,0}_\varepsilon  - \hat{h}^{s-1}_\varepsilon).
\end{equation*}
Thus as for all $l \in [-1,1]$ we have $|\check{h}^l_\varepsilon| \leq 1$, $|\hat{h}^l_\varepsilon| \leq 1$ and $|v^{t,0}_\varepsilon| \leq 1$ we see that for each $t \in [-t_0,t_0]$ and $s \in [-2,2]$
\begin{equation}\label{eqn: local path bound 1}
    \mathcal{F}_{\varepsilon,\lambda}(g^{t,s}_\varepsilon) \leq \mathcal{F}_{\varepsilon,\lambda}(v^{t,0}_\varepsilon) + \frac{1}{2\sigma}\int_{B_{\frac{R}{2}}(p)} e_\varepsilon(g_\varepsilon) + \sup_{l \in (-1,1)}\left(\frac{1}{2\sigma}\int_{B_{\frac{R}{2}}(p)} e_\varepsilon(p^l_\varepsilon) \right) + \lambda \mathrm{Vol}(B_{\frac{R}{2}}(p)). 
\end{equation}
Similarly, we note that as $v_\varepsilon = g_\varepsilon$ on $N \setminus B_{\frac{R}{2}}(p)$, $|g_\varepsilon|, |v_\varepsilon| \leq 1$ on $N$ and $\mathcal{F}_{\varepsilon,\lambda}(g_\varepsilon) \leq \mathcal{F}_{\varepsilon,\lambda}(v_\varepsilon)$ by construction of $g_\varepsilon$ we have that
\begin{equation*}
    \frac{1}{2\sigma}\int_{B_{\frac{R}{2}}(p)} e_\varepsilon(g_\varepsilon) \leq \frac{1}{2\sigma}\int_{B_{\frac{R}{2}}(p)} e_\varepsilon(v_\varepsilon) + \lambda \mathrm{Vol}(B_{\frac{R}{2}}(p)).
\end{equation*}
and so by similar calculations to Subsection \ref{subsec: one dimensional profile} and (\ref{eqn: 1d solution errors in eps^2}) we see that 
\begin{align*}
    \frac{1}{2\sigma}\int_{B_{\frac{R}{2}}(p)} e_\varepsilon(v_\varepsilon) &\leq (1 + \beta \varepsilon^2){\esssup}_{a \in [-2\varepsilon\Lambda_\varepsilon,2\varepsilon\Lambda_\varepsilon]}\mathcal{H}^n(\{d^\pm_{\overline{M}} = a \} \cap B_R(p))\\
    &\rightarrow \mathcal{H}^n(M \cap B_R(p)) \text{ as } \varepsilon \rightarrow 0.
\end{align*}
By (\ref{eqn: monotonicity gives euclidean volume growth for M around p}) we conclude that, for sufficiently small $\varepsilon > 0$, we have
\begin{equation}\label{eqn: local path bound 2}
   \frac{1}{2\sigma}\int_{B_{\frac{R}{2}}(p)} e_\varepsilon(g_\varepsilon)
    \leq C_pR^n + \lambda \mathrm{Vol}(B_{\frac{R}{2}}(p)) + \frac{\eta}{8}.
\end{equation}
We now compute upper $\mathcal{E}_\varepsilon$ bounds on the term in (\ref{eqn: local path bound 1}) involving the functions $p^l_\varepsilon$ in terms of the geometry of $N$. Applying the co-area formula, slicing with $d^\pm_{P_l}$, and using (\ref{eqn: 1d solution errors in eps^2}) we see that for $l \in (-1,1)$ we have
\begin{align*}
    \frac{1}{2\sigma}\int_{B_{\frac{R}{2}}(p)} e_\varepsilon(p^l_\varepsilon) &= \frac{1}{2\sigma}\int_{B_{\frac{R}{2}}(p)} e_\varepsilon(\overline{\mathbb{H}}^\varepsilon(d^\pm_{P_l})) = \frac{1}{2\sigma}\int_\mathbb{R} \int_{\{ d^\pm_{P_l} = a \} \cap B_{\frac{R}{2}}(p)} e_\varepsilon(\overline{\mathbb{H}}^\varepsilon(a)) d\mathcal{H}^n \,da\\
    &= \frac{1}{2\sigma}\int_\mathbb{R} \mathcal{H}^n(\{ d^\pm_{P_l} = a \} \cap B_{\frac{R}{2}}(p))e_\varepsilon(\overline{\mathbb{H}}^\varepsilon(a)) \,da\\
    &\leq (1 + \beta \varepsilon^2){\esssup}_{a \in [-2\varepsilon\Lambda_\varepsilon, 2\varepsilon\Lambda_\varepsilon]} \mathcal{H}^n(\{ d^\pm_{P_l} = a \} \cap B_{\frac{R}{2}}(p)).
\end{align*}
We now bound ${\esssup}_{a \in [-2\varepsilon\Lambda_\varepsilon, 2\varepsilon\Lambda_\varepsilon]} \mathcal{H}^n(\{ d^\pm_{P_l} = a \} \cap B_{\frac{R}{2}}(p))$ from above independently of $\varepsilon$. Recall that $P_l = \psi(\Pi_l)$ where $\psi$ is a smooth 2-bi-Lipschitz map from $\overline{B}_R^{\mathbb{R}^{n+1}}(0)$ to $\overline{B}_R(p)$ and $\Pi_l = \left\{y \in \overline{B}^{\mathbb{R}^{n+1}}_R \: \Big| \: y = (y_1, \dots, y_n, lR) \right\}$. For $l \in (-1,1)$ we have that $\Pi_l$ is a smooth embedded $n$-dimensional sub-manifold of $\overline{B}_R^{\mathbb{R}^{n+1}}$ and hence its image, $P_l$, in $N$ is a smooth embedded $n$-dimensional sub-manifold of $\overline{B}_R(p) \subset N$.

\bigskip

We define the tubular hypersurface at distance $s$ from $P_l$ to be the set 
\begin{equation*}
P_l(a) = \{ x \in N \: | \: \text{ there exists a geodesic of length } a \text{ meeting } P_l \text{ orthogonally} \}.
\end{equation*} 
Here we choose $\varepsilon > 0$ possibly smaller to ensure that $2\varepsilon\Lambda_\varepsilon < \frac{R}{2}$, so that $\{|d^\pm_{P_l}| = a \} \cap B_{\frac{R}{2}}(p) \subset P_l(a)$ for all $l \in (-1,1)$ such that $B_{\frac{R}{2}}(p) \cap P_l \neq \emptyset$. We then apply \cite[Lemma 3.12/8.2]{tubes} to compute that, for each $l \in (-1,1)$ we have
\begin{equation*}
    \mathcal{H}^n(P_l(s)) \leq \int_{P_l} \int_{S^0} \theta_u(q,s) du \, d\mathcal{H}^n(q).
\end{equation*}
In the above we are adopting the notation that $S^0$ is the $0$ dimensional unit sphere, and for $u \in S^0$ we define $\theta_u(q,a)$ to be the Jacobian of the exponential map $\exp_q$ at the point $\exp_q(au)$. Note then that for each $q \in P_l$ we have that $\theta_u(q,0) = 1$ and $\theta_u(q,a) \rightarrow 1$ as $a \rightarrow 0$. For all $\varepsilon > 0$ sufficiently small we ensure that $\theta_u(q,a) \leq 2$ for any $q \in \overline{B}_R(p)$; this may be seen by noticing that as the exponential map is smooth, its Jacobian varies continuously in each of its variables and hence its maximum, for a fixed $a \in \mathbb{R}$ on $\overline{B}_R(p) \times \mathbb{R}$, is achieved and converges to $1$ as $a \rightarrow 0$. Thus, we have that
\begin{equation*}
    \mathcal{H}^n(P_l(a)) \leq 4 \mathcal{H}^n(P_l).
\end{equation*}
Note that as $\psi$ is $2$-bi-Lipschitz we may apply \cite[Theorem 2.8]{E-G} to see that
\begin{equation*}
    \mathcal{H}^n(P_l) \leq 2^n \mathcal{H}^n(\Pi_l) \leq 2^n R^n \omega_n,
\end{equation*} 
where $\omega_n$ is the volume of the $n$-dimensional unit ball. Hence, for all $l \in [-1,1]$ and $\varepsilon > 0$ sufficiently small we have that 
\begin{equation*}
    \mathcal{H}^n(P_l(s)) \leq 2^{n+2}R^n\omega_n.
\end{equation*}
We thus conclude that as $\{|d^\pm_{P_l}| = a \} \cap B_{\frac{R}{2}}(p) \subset P_l(a)$ for all $l \in (-1,1)$ such that $B_{\frac{R}{2}}(p) \cap P_l \neq \emptyset$ we have
\begin{equation*}
    {\esssup}_{a \in [-2\varepsilon\Lambda_\varepsilon, 2\varepsilon\Lambda_\varepsilon]}\mathcal{H}^n(\{d^\pm_{P_l} = a\} \cap B_{\frac{R}{2}}(p)) \leq 2^{n+2}R^n\omega_n,
\end{equation*}
and hence
\begin{equation}\label{eqn: local path bound 3}
    \frac{1}{2\sigma}\int_{{B_\frac{R}{2}}(p)} e_\varepsilon(p^l_\varepsilon) \leq  (1 + \beta \varepsilon^2)2^{n+2}R^n\omega_n.
\end{equation}
Choosing $\varepsilon > 0$ again possibly smaller we ensure that
\begin{equation*}
   2^{n+2}R^n\omega_n   \beta \varepsilon^2 < \frac{\eta}{8}.
\end{equation*}
Recall that, given some $\eta > 0$, we chose $R > 0$ sufficiently small to ensure that
\begin{equation*}
     2^{n+2}R^n\omega_n + C_pR^n + 2\lambda \mathrm{Vol}(B_{\frac{R}{2}}(p)) < \frac{\eta}{4}.
\end{equation*}
Combining the above two bounds with (\ref{eqn: local path bound 1}), (\ref{eqn: local path bound 2}) and (\ref{eqn: local path bound 3}) we conclude that for each $t \in [-t_0,t_0]$ and $s \in [-2,2]$ we have
\begin{equation*}
    \mathcal{F}_{\varepsilon,\lambda}(g^{t,s}_\varepsilon) \leq  \mathcal{F}_{\varepsilon,\lambda}(v^{t,0}_\varepsilon) + \frac{\eta}{2},
\end{equation*}
for $\varepsilon > 0$ sufficiently small. 

\bigskip

As $\{ g_\varepsilon \neq v_\varepsilon \} \subset B_{\frac{R}{2}}(p)$ we have $g^{t,s}_\varepsilon = v^{t,0}_\varepsilon$ on $N \setminus B_{\frac{R}{2}}(p)$, so using the fact that in $B_R(p)$ we have $v^{t,0}_\varepsilon = v_\varepsilon$ and $g^{t,2}_\varepsilon = g_\varepsilon$ we thus note that
\begin{equation*}
    \mathcal{F}_{\varepsilon,\lambda}(g^{t,2}_\varepsilon) = \mathcal{F}_{\varepsilon,\lambda}(v^{t,0}_\varepsilon) + (\mathcal{F}_{\varepsilon,\lambda}(g_\varepsilon) - \mathcal{F}_{\varepsilon,\lambda}(v_\varepsilon)).
\end{equation*}
Using this, if the functions $v_\varepsilon$ are such that $\mathcal{F}_{\varepsilon,\lambda}(v_\varepsilon) \geq \mathcal{F}_{\varepsilon,\lambda}(g_\varepsilon) + \tau$ for some $\tau > 0$, we therefore conclude that
\begin{equation*}
    \mathcal{F}_{\varepsilon,\lambda}(g^{t,2}_\varepsilon) \leq  \mathcal{F}_{\varepsilon,\lambda}(v^{t,0}_\varepsilon) -\tau,
\end{equation*}
as desired.
\end{proof}

\section{Proof of Theorems \ref{thm: generic regularity in dimension 8 in positive ricci},  \ref{thm: generic removability of isolated singularities with regular cones in positive ricci} \& \ref{thm: local minimisation in positive ricci}}\label{sec: conclusion of the proof}

We recall our setup. Let $(N,g)$ be a smooth compact Riemannian manifold of dimension $3$ or higher with positive Ricci curvature. We consider $M \subset N$ a closed embedded hypersurface of constant mean curvature $\lambda \in \mathbb{R}$, smooth away from a closed singular set of Hausdorff dimension at most $n-7$, as produced by the one-parameter Allen--Cahn min-max in \cite{bellettini-wickramasekera}, with constant prescribing function $\lambda$. 

\subsection{Proof of Theorem \ref{thm: local minimisation in positive ricci}}\label{subsec: proof of theorem 3}

\begin{proof}[Proof of Theorem \ref{thm: local minimisation in positive ricci}]
For each isolated singularity $p \in \mathrm{Sing}(M)$ we may choose, positive $r_0$, $r$ and $R$, as in Subsection \ref{subsec: choosing radii} and Proposition \ref{prop: local path} in order to define the various paths constructed in Subsections \ref{subsec: defining shifted functions}, \ref{subsec: sliding the one-dimensional profile on M} and \ref{subsec: local path}. Assuming that for $g_\varepsilon \in \mathcal{A}_{\varepsilon,\frac{R}{2}}(p)$, defined as in Lemma \ref{lem: function minimisation} (setting $\rho = \frac{R}{2}$ and $q = p$), there exists a $\tau > 0$ such that $\mathcal{F}_{\varepsilon,\lambda}(v_\varepsilon) \geq \mathcal{F}_{\varepsilon,\lambda}(g_\varepsilon) + \tau$, for all $\varepsilon > 0$ sufficiently small, then using the constructions in Section \ref{sec: paths} as mentioned above we define the following nine paths:

\begin{itemize}
    \item First, a path from $+1$ to the constant $b_\varepsilon$,
    \begin{equation*}
        s \in [1,b_\varepsilon] \rightarrow s,
    \end{equation*}
    through constant functions which, by the construction of $b_\varepsilon$ through negative gradient flow of $\mathcal{F}_{\varepsilon,\lambda}$ in Subsection \ref{subsec: Allen--Cahn minmax}, has $\mathcal{F}_{\varepsilon,\lambda}$ energy along the path $\leq \mathcal{F}_{\varepsilon,\lambda}(1)$.
    
    \item Second, a path from $+1$ to $v^{-t_0}_\varepsilon = v^{-t_0,1}_\varepsilon$,
    \begin{equation*}
    t \in [-2d(N), -t_0] \rightarrow v^{t}_\varepsilon,
    \end{equation*}
    which by Lemma \ref{lem: sliding path upper energy bounds} has $\mathcal{F}_{\varepsilon,\lambda}$ energy along the path $\leq \mathcal{F}_{\varepsilon,\lambda}(v^{-t_0,1}) \leq \mathcal{F}_\lambda(E) - \eta + E(\varepsilon)$. This path varies continuously by the reasoning in Subsection \ref{subsec: sliding the one-dimensional profile on M}.

    \item Third, a path from $v^{-t_0,1}_\varepsilon$ to $v^{-t_0,0}_\varepsilon = g^{-t_0,-2}_\varepsilon$,
    \begin{equation*}
    s \in [0, 1] \rightarrow v^{-t_0,s}_\varepsilon,
    \end{equation*}
    which by Lemma \ref{lem: shifting path upper energy bounds} has $\mathcal{F}_{\varepsilon,\lambda}$ energy along the path $\leq \mathcal{F}_{\varepsilon,\lambda}(v_\varepsilon) -  \eta$. This path varies continuously by (\ref{eqn: sliding functions cts path in s}).

    \item Fourth, a path from $g^{-t_0,-2}_\varepsilon$ to $g^{-t_0,2}_\varepsilon$,
    \begin{equation*}
    s \in [-2,2] \rightarrow g^{-t_0,s}_\varepsilon,
    \end{equation*}
    which by Proposition \ref{prop: local path} and Lemma \ref{lem: shifting path upper energy bounds} has $\mathcal{F}_{\varepsilon,\lambda}$ energy along the path $\leq \mathcal{F}_{\varepsilon,\lambda}(v_\varepsilon) -  \frac{\eta}{2}$. This path varies continuously by Proposition \ref{prop: local path}.

    \item Fifth, a path from $g^{-t_0,2}_\varepsilon$ to $g^{t_0,2}_\varepsilon$,
    \begin{equation*}
    t \in [-t_0,t_0] \rightarrow g^{t,2}_\varepsilon,
    \end{equation*}
    which by Proposition \ref{prop: local path} and Lemma \ref{lem: shifting path upper energy bounds} has $\mathcal{F}_{\varepsilon,\lambda}$ energy along the path $\leq \mathcal{F}_{\varepsilon,\lambda}(v_\varepsilon) - \tau + E(\varepsilon)$. This path varies continuously by (\ref{eqn: sliding functions cts path in t}) and Proposition \ref{prop: local path}.

    \item Sixth, a path from $g^{t_0,-2}_\varepsilon = v^{t_0,0}_\varepsilon$ to $g^{t_0,2}_\varepsilon$,
    \begin{equation*}
    s \in [-2,2] \rightarrow g^{t_0,s}_\varepsilon,
    \end{equation*}
    which by  by Proposition \ref{prop: local path} and Lemma \ref{lem: shifting path upper energy bounds} has $\mathcal{F}_{\varepsilon,\lambda}$ energy along the path $\leq \mathcal{F}_{\varepsilon,\lambda}(v_\varepsilon) - \frac{\eta}{2}$. This path varies continuously by Proposition \ref{prop: local path}.

    \item Seventh, a path from         $v^{t_0,0}_\varepsilon$ to $v^{t_0,1}_\varepsilon = v^{t_0}_
    \varepsilon$,
    \begin{equation*}
    s \in [0,1] \rightarrow v^{t_0,s}_\varepsilon
    \end{equation*}
    which by Lemma \ref{lem: shifting path upper energy bounds} has $\mathcal{F}_{\varepsilon,\lambda}$ energy along the path $\leq \mathcal{F}_{\varepsilon,\lambda}(v_\varepsilon) -  \eta$. This path varies continuously by (\ref{eqn: sliding functions cts path in s}).

    \item Eighth, a path from $v^{t_0}_\varepsilon$ to $-1$,
    \begin{equation*}
    t \in [t_0,2d(N)] \rightarrow v^t_\varepsilon
    \end{equation*}
    which by Lemma \ref{lem: sliding path upper energy bounds} has $\mathcal{F}_{\varepsilon,\lambda}$ energy along the path $\leq \mathcal{F}_{\varepsilon,\lambda}(v_\varepsilon) - \eta + E(\varepsilon)$. This path varies continuously by the reasoning in Subsection \ref{subsec: sliding the one-dimensional profile on M}.

    \item Ninth, a path from $-1$ to the constant $a_\varepsilon$,
    \begin{equation*}
        s \in [-1,a_\varepsilon] \rightarrow s,
    \end{equation*}
    through constant functions which, by the construction of $a_\varepsilon$ through negative gradient flow of $\mathcal{F}_{\varepsilon,\lambda}$ in Subsection \ref{subsec: Allen--Cahn minmax}, has $\mathcal{F}_{\varepsilon,\lambda}$ energy along the path $\leq \mathcal{F}_{\varepsilon,\lambda}(-1)$.
\end{itemize}
Consider the above paths in the following order: first (reversed), second, third (reversed), fourth, fifth, sixth (reversed), seventh, eighth and ninth; this is the path depicted in Figure \ref{fig: path} in Subsection \ref{subsec: strategy}. In the order just given, the endpoint of each partial path matches the starting point of
the next, therefore their composition in the same order provides a continuous path
in $W^{1,2}(N)$, for all $\varepsilon > 0$ sufficiently small, from the constant $a_\varepsilon$ to the constant $b_\varepsilon$ with
\begin{equation*}
    \text{$\mathcal{F}_{\varepsilon,\lambda}$ energy along the path} \leq \mathcal{F}_{\varepsilon,\lambda}(v_\varepsilon) - \min\left\{\frac{\eta}{2},\tau \right\} + E(\varepsilon),
\end{equation*}
by Subsection \ref{subsec: Allen--Cahn minmax}, Lemma \ref{lem: shifting path upper energy bounds}, Lemma \ref{lem: sliding path upper energy bounds} and Proposition \ref{prop: local path}. By (\ref{eqn: 1d profile approx cmc energy}) and the fact that $E(\varepsilon) \rightarrow 0$ by Lemma \ref{lem: shifting path upper energy bounds}, by choosing  $\varepsilon > 0$ sufficiently small we ensure that we have
\begin{equation*}
    \text{$\mathcal{F}_{\varepsilon,\lambda}$ energy along the path} \leq \mathcal{F}_\lambda(E) - \min\left\{\frac{\eta}{4},\frac{\tau}{2} \right\}.
\end{equation*}
Note that as (\ref{eqn: min-max critical points converge to surface energy}) holds, by (\ref{eqn: 1d profile approx cmc energy}) and the path provided in Lemma \ref{lem: sliding path upper energy bounds}, we ensure $\mathcal{F}_{\varepsilon,\lambda}(u_{\varepsilon_j}) \rightarrow \mathcal{F}_{\lambda}(E)$ as $\varepsilon_j \rightarrow 0$. Thus, as the above path is admissible in the min-max construction of \cite{bellettini-wickramasekera}, we contradict the assumption that $ \mathcal{F}_{\varepsilon,\lambda}(v_\varepsilon) \geq \mathcal{F}_{\varepsilon,\lambda}(g_\varepsilon) + \tau$ for some $\tau > 0$. We therefore conclude that for any such $M$ as produced by the Allen--Cahn min-max procedure in Ricci positive curvature must be such that 
\begin{equation*}
   \mathcal{F}_{\varepsilon,\lambda}(v_\varepsilon) \leq \mathcal{F}_{\varepsilon,\lambda}(g_\varepsilon) + \tau_\varepsilon \text{ for some sequence } \tau_\varepsilon \rightarrow 0 \text{ as } \varepsilon \rightarrow 0.
\end{equation*}
With the above in mind and applying Lemma \ref{lem: recovery functions}, by setting $r_1 = \frac{R}{4}$ and $r_2 = \frac{R}{2}$, we have that $E$ satisfies
\begin{equation*}
    \mathcal{F}_\lambda(E) = \inf_{G \in \mathcal{C}(N)} \{\mathcal{F}_\lambda(G) \: | \: G \setminus B_\frac{R}{4}(p) = E \setminus B_\frac{R}{4} (p)\}.
\end{equation*}
In particular, by Remark \ref{rem: almost min area min cones}, we note that every tangent cone at an isolated singularity of $M$ is thus area-minimising. Applying the above reasoning for each isolated singularity of $M$ then concludes the proof of Theorem \ref{thm: local minimisation in positive ricci}.
\end{proof}

\subsection{Proof of Theorems \ref{thm: generic regularity in dimension 8 in positive ricci} \& \ref{thm: generic removability of isolated singularities with regular cones in positive ricci}}

\begin{proof}[Proof of Theorem \ref{thm: generic removability of isolated singularities with regular cones in positive ricci}]
To prove Theorem \ref{thm: generic removability of isolated singularities with regular cones in positive ricci} we exploit the results of Section \ref{sec: cmc surgery}, using the fact that the proof of Theorem \ref{thm: local minimisation in positive ricci} ensures $M$ is one-sided minimising in the sense of Definition \ref{def: one-sided minimiser} by Remark \ref{rem: surgery works on min-max cmc}. Because the set of isolated singular points of $M$ with regular tangent cones is discrete, but not necessarily closed when $n \geq 8$, it suffices to index the isolated singularities with regular tangent cones and make a small change to the metric around each point so that the sum of the resulting perturbations is arbitrarily small.

\bigskip

With the above in hand, we now make an arbitrarily small change to the metric at each isolated singular point, $p \in \mathrm{Sing}(M)$, with regular tangent cone. By applying Proposition \ref{prop: CMC surgery} in a ball, $B_{\rho}(p)$, for some $\rho > 0$ sufficiently small, there exists both:
\begin{itemize}
    \item A metric, $\tilde{g}$, arbitrarily close to $g$ in the $C^{k,\alpha}$ norm for each $k \geq 1$ and $\alpha \in (0,1)$, agreeing with $g$ on $N \setminus B_{\rho}(p)$.

    \item A closed embedded hypersurface, $\widetilde{M}$, of constant mean curvature $\lambda$, which is smooth in $B_{\rho}(p)$, and agrees with $M$ on $N \setminus B_{\rho}(p)$.
\end{itemize}
In this manner we are able to locally smooth $M$ up to an arbitrarily small perturbation of the metric $g$. Thus we have shown that for each $k \geq 1$ and $\alpha \in (0,1)$ there exists a dense set of metrics, $\mathcal{G}_k \subset \text{Met}^{k,\alpha}_{\mathrm{Ric}_g > 0}$, such that for each $h \in \mathcal{G}_k$, $(N,h)$ admits a closed embedded hypersurface of constant mean curvature $\lambda$, smooth away from a closed singular set of Hausdorff dimension at most $n-7$, containing no singularities with regular tangent cones. By \cite[Theorem 2.10]{White2015} there thus exists a dense set $\mathcal{G}$ of the smooth metrics with Ricci positive curvature such that for each $h \in \mathcal{G}$, $(N,h)$ admits a closed embedded hypersurface of constant mean curvature $\lambda$, smooth away from a closed singular set of Hausdorff dimension at most $n-7$, containing no singularities with regular tangent cones, concluding the proof of Theorem \ref{thm: generic removability of isolated singularities with regular cones in positive ricci}.
\end{proof}

\begin{proof}[Proof of Theorem \ref{thm: generic regularity in dimension 8 in positive ricci}]
In dimension $8$ all singularities are isolated with regular tangent cone. Hence, for each $g \in \mathcal{G}$, as produced in the proof of Theorem \ref{thm: generic removability of isolated singularities with regular cones in positive ricci}, there exists a smooth hypersurface of constant mean curvature. The fact that $\mathcal{G}$ as above is then open in dimension $8$ follows from the results of \cite[Section 7]{White1991}.
\end{proof}

\begingroup
\setstretch{0.5}
\bibliographystyle{alpha} 
\bibliography{main} 
\endgroup
\hrule
\Addresses

\end{document}